\newtheorem{lemma}[subsection]{Lemma}
\newtheorem{proposition}[subsection]{Proposition}
\newtheorem{theorem}[subsection]{Theorem}
\theoremstyle{definition}
\newtheorem{pg}[subsection]{}
\newtheorem{definition}[subsection]{Definition}
\newtheorem{remark}[subsection]{Remark}
\newtheorem{example}[subsection]{Example}
\DeclareMathOperator{\Alg}{Alg}
\DeclareMathOperator{\bbE}{\mathbb{E}}
\DeclareMathOperator{\Br}{Br}
\DeclareMathOperator{\CAlg}{CAlg}
\DeclareMathOperator{\Cat}{\mathcal{C}at}
\newcommand{\Cech}{\v{C}ech\,}
\DeclareMathOperator{\cg}{cg}
\DeclareMathOperator{\cn}{cn}
\DeclareMathOperator*{\colim}{colim}
\DeclareMathOperator{\DerDM}{DerDM}
\DeclareMathOperator{\End}{End}
\DeclareMathOperator{\Ext}{Ext}
\DeclareMathOperator{\fib}{fib}
\DeclareMathOperator{\fpqc}{fpqc}
\DeclareMathOperator{\fsm}{fsm}
\DeclareMathOperator{\Fun}{Fun}
\DeclareMathOperator{\gp}{gp}
\DeclareMathOperator{\Groth}{\mathsf{Groth}}
\DeclareMathOperator{\HH}{H}
\DeclareMathOperator{\Hilb}{Hilb}
\DeclareMathOperator{\Ind}{Ind}
\DeclareMathOperator{\lex}{lex}
\DeclareMathOperator{\LinCat}{LinCat}
\DeclareMathOperator{\LPres}{\mathcal{P}r^{L}}
\DeclareMathOperator{\Map}{Map}
\DeclareMathOperator{\Mod}{Mod}
\DeclareMathOperator{\op}{op}
\DeclareMathOperator{\Pres}{\mathcal{P}r}
\DeclareMathOperator{\PSt}{PSt}
\DeclareMathOperator{\QCoh}{QCoh}
\DeclareMathOperator{\QStk}{QStk}
\DeclareMathOperator{\rev}{rev}
\DeclareMathOperator{\RMod}{RMod}
\DeclareMathOperator{\sBr}{\mathscr{B}r}
\DeclareMathOperator{\Shv}{\mathcal{S}hv}
\DeclareMathOperator{\sm}{sm}
\DeclareMathOperator{\SpDM}{SpDM}
\DeclareMathOperator{\Spec}{Spec}
\DeclareMathOperator{\sPic}{\mathscr{P}ic}
\DeclareMathOperator{\SSet}{\mathcal{S}}
\DeclareMathOperator{\St}{St}
\DeclareMathOperator{\Supp}{Supp}
\DeclareMathOperator{\calA}{\mathcal{A}}
\DeclareMathOperator{\calC}{\mathcal{C}}
\DeclareMathOperator{\calD}{\mathcal{D}}
\DeclareMathOperator{\calE}{\mathcal{E}}
\DeclareMathOperator{\calO}{\mathcal{O}}
\DeclareMathOperator{\calQ}{\mathcal{Q}}
\DeclareMathOperator{\calU}{\mathcal{U}} 
\DeclareMathOperator{\calW}{\mathcal{W}} 
\DeclareMathOperator{\calX}{\mathcal{X}}
\DeclareMathOperator{\sfK}{\mathsf{K}}
\DeclareMathOperator{\sfU}{\mathsf{U}}
\DeclareMathOperator{\sfV}{\mathsf{V}}
\DeclareMathOperator{\sfX}{\mathsf{X}}
\DeclareMathOperator{\sfY}{\mathsf{Y}}
\newcommand{\et}{\mathrm{\acute et}}
\newcommand{\Adjoint}[4]{\xymatrix@1{#1:#2 \ar@<.4ex>[r] & #3:#4 \ar@<.4ex>[l]}}
\newcommand{\Pull}[8]{\xymatrix{#1\ar[r]^-{#5}\ar[d]^-{#6} & #2 \ar[d]^-{#7} \\ #3 \ar[r]^-{#8} & #4}}
\title{Brauer Spaces of Spectral Algebraic Stacks}
\author{Chang-Yeon Chough}
\address{Center for Geometry and Physics, Institute for Basic Science (IBS), Pohang 37673, Republic of Korea}
\email{chough@ibs.re.kr}
\begin{document}

\begin{abstract}
	We study the question of whether the Brauer group is isomorphic to the cohomological one in spectral algebraic geometry. For this, we prove the compact generation of the derived category of twisted sheaves for quasi-compact spectral algebraic stacks with quasi-affine diagonal, which admit a quasi-finite presentation; in particular, we obtain the compact generation of the unbounded derived category of quasi-coherent sheaves and the existence of compact perfect complexes with prescribed support for such stacks. We also study the relationship between derived and spectral algebraic stacks, so that our results can be extended to the setting of derived algebraic geometry.
\end{abstract}

\setcounter{tocdepth}{1} 
\maketitle
\tableofcontents

\section{Introduction}

\begin{pg}
	The purpose of this paper is to formulate and prove the following question addressed by Grothendieck in the setting of spectral algebraic geometry: given a scheme $X$, is the canonical map $\delta:\Br(X) \rightarrow \HH^2_{\et}(X, \mathbb{G}_m)$ an isomorphism of abelian groups? Recall from \cite[1.2]{MR1608798} that an Azumaya algebra is a sheaf of $\calO_X$-algebra which is locally isomorphic to a matrix algebra and that the Brauer group $\Br(X)$ classifies Azumaya algebras up to Morita equivalence. The map $\delta$ is injective, and its image lies in the torsion subgroup if $X$ is quasi-compact by virtue of \cite[1.4, p.205]{MR1608798}. Note that $\delta$ needs not be an isomorphism in general. In the positive direction, we have many important results which include that $\delta$ is surjective for noetherian schemes $X$ of dimension $\leq 1$ or of dimension $2$ when $X$ is also regular by Grothendieck \cite[2.2]{MR1608805} and that the image of $\delta$ is the torsion subgroup of $\HH^2_{\et}(X, \mathbb{G}_m)$ for quasi-compact and separated schemes which admit an ample line bundle by Gabber (and the proof of de Jong \cite[1.1]{deJong}). To\"en established a more general result for quasi-compact and quasi-separated (derived) schemes by introducing the notion of \emph{derived Azumaya algebra} of \cite[2.1]{MR2957304}, which is a ``dg-enhancement of Azumaya algebras"; see \cite[5.1]{MR2957304}. In fact, To\"en addressed the question regarding the map $\delta$ by investigating the compact generation of $\alpha$-twisted derived dg-category $\mathrm{L}_\alpha(X)$ (see \cite[4.1]{MR2957304}) for each element $\alpha \in \HH^2_{\et}(X, \mathbb{G}_m)$: the endomorphism algebra of a compact generator of $\mathrm{L}_\alpha(X)$ is a derived Azumaya algebra whose associated element of $\HH^2_{\et}(X, \mathbb{G}_m)$ is $\alpha$; see \cite[4.6]{MR2957304}. Extending this idea to the spectral setting, Antieau--Gepner obtained a similar result for quasi-compact quasi-separated spectral schemes \cite[7.2]{MR3190610}. This paper originated from the desire to extend these results to algebraic stacks in the derived and spectral settings.

The main result of this paper is the following:
\end{pg}

\begin{theorem}\label{Brauer spaces and Azumaya algebras for quasi-geometric spectral algebraic stacks}
	Let $X$ be a quasi-geometric spectral algebraic stack which admits a quasi-finite presentation. Then each element of $\Br^\dagger(X)$ has the form $[\calA]$ for some Azumaya algebra $\calA$ on $X$. 
\end{theorem}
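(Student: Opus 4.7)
The plan is to follow the strategy pioneered by To\"en \cite{MR2957304} and extended to the spectral setting by Antieau--Gepner \cite{MR3190610}, suitably adapted to the stacky context. Given a class $\alpha \in \Br^\dagger(X)$, I interpret $\alpha$ as an invertible $\QCoh(X)$-linear stable $\infty$-category $\calC_\alpha$, and aim to produce an Azumaya algebra $\calA$ whose Morita class recovers $\alpha$ by taking the endomorphism algebra of a suitable compact generator of $\calC_\alpha$.

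The central technical input is the compact generation of $\calC_\alpha$, which is precisely the main result advertised in the abstract. To establish this, I would exploit a quasi-finite presentation $f:U \to X$ with $U$ affine, together with the quasi-affine diagonal built into the quasi-geometric hypothesis: after pullback along $f$, the twist trivializes and $f^*\calC_\alpha$ reduces to $\QCoh(U)$, which has the expected compact generator. Descent back to $X$ proceeds in two steps. First, one produces compact objects in $\calC_\alpha$ with controlled scheme-theoretic support by pushing forward and truncating, using quasi-affineness of the diagonal to guarantee that $f_*$ has the colimit-preservation properties needed to push compact objects to compact objects. Second, one assembles a global compact generator from these local pieces by a Thomason-style Mayer--Vietoris induction on a stratification of $X$ extracted from the quasi-finite presentation.

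With a compact generator $E \in \calC_\alpha$ in hand, the endomorphism spectrum $\calA := \End(E)$ acquires the structure of an $\calO_X$-algebra, and the Morita equivalence $\calC_\alpha \simeq \RMod_\calA$ follows formally from the compact generation together with perfectness of $E$. To verify that $\calA$ is Azumaya, I would check the local matrix-algebra condition after pullback along an \'etale cover refining $f$, where the twist trivializes and $E$ becomes a dualizable generator of $\QCoh(U)$ whose endomorphism object is a matrix algebra. Naturality of the Brauer class under Morita equivalence then identifies $[\calA]$ with $\alpha$ in $\Br^\dagger(X)$.

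The main obstacle, by a wide margin, is the descent step in the compact generation argument. For spectral algebraic stacks one cannot invoke an \'etale or smooth presentation to run the classical Mayer--Vietoris; the quasi-finite hypothesis forces one to handle potential ramification carefully, and it is precisely the quasi-affineness of the diagonal that keeps the relevant pushforwards well-behaved. I expect the real work to lie in a refined d\'evissage producing compact perfect complexes with prescribed support, extending the quasi-compact quasi-separated spectral scheme case of Antieau--Gepner to the quasi-geometric stack setting through careful use of the quasi-finite presentation.
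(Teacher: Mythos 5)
Your overall strategy matches the paper's: prove twisted compact generation (the bulk of the work), then take the endomorphism algebra of a compact object and verify the Azumaya condition after pulling back to a presentation. The reduction to compact generation via quasi-finite d\'evissage and Thomason--Neeman style excision/Mayer--Vietoris is exactly how the paper proceeds (via \ref{a presentation for applying quasi-finite devissage}, \ref{finite morphisms and twisted compact generation}, \ref{excision squares and twisted compact generation}).

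However, two of your intermediate claims are incorrect and need repair. First, you assert that ``after pullback along $f$, the twist trivializes and $f^\ast\calC_\alpha$ reduces to $\QCoh(U)$.'' This is false: a quasi-finite flat presentation (or even an affine open) does not trivialize an invertible compactly generated stable quasi-coherent stack, and in the spectral setting the Brauer group of a field or connective $\bbE_\infty$-ring can be nontrivial. What is true --- and all you need --- is that $f^\ast\calC_\alpha$ is by definition a compactly generated stable $A$-linear $\infty$-category. Second, your verification that $\calA$ is Azumaya via ``checking the local matrix-algebra condition after pullback along an \'etale cover refining $f$'' would not work for the same reason. The paper instead argues that, because the pullback $f^\ast C$ of a suitable \emph{finite} sum of the compact generators is itself a compact generator of the smooth $A$-linear $\infty$-category $f^\ast\calC$ (smoothness follows from invertibility via \cite[11.5.2.5]{lurie2018sag}, and the finite-sum reduction from \cite[11.3.2.4]{lurie2018sag}), the comparison map $\calC' \to \calC$ is an equivalence after pullback, and then invertibility of $\calC$ plus \cite[11.5.3.4]{lurie2018sag} forces $f^\ast\calE$ to be Azumaya --- no trivialization of the twist required, only flat descent for the Azumaya condition (\ref{Azumaya algebras are local for the flat topology}). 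Relatedly, you take for granted the existence of a single global compact generator $E$ of $\QCoh(X;\calC_\alpha)$; this is not immediate from compact generation alone, and the paper circumvents the issue by constructing $C$ as a finite direct sum whose \emph{pullback} generates, rather than requiring $C$ to generate globally.
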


\begin{remark}\label{a quasi-finite presentation}
	Here the quasi-geometric spectral algebraic stacks of \ref{quasi-geometric spectral algebraic stack} are a formulation of quasi-compact algebraic stacks with quasi-affine diagonal in spectral algebraic geometry. We will say that a quasi-geometric spectral algebraic stack $X$ admits a \emph{quasi-finite presentation} if there exist an $\bbE_\infty$-ring $A$ and a morphism $\Spec A \rightarrow X$ which is locally quasi-finite, faithfully flat, and locally almost of finite presentation (see \cite[4.2.0.1]{lurie2018sag}). The \emph{extended Brauer group} $\Br^\dagger(X)$ is the set of connected components of the \emph{extended Brauer space} $\sBr^\dagger(X)$ of \cite[11.5.2.1]{lurie2018sag}. Given an Azumaya algebra $\calA$ of \cite[11.5.3.7]{lurie2018sag}, the \emph{extended Brauer class} $[\calA]$ is defined as in \cite[11.5.3.9]{lurie2018sag}.  	
\end{remark}

\begin{remark} 
	\ref{Brauer spaces and Azumaya algebras for quasi-geometric spectral algebraic stacks} recovers the following results from classical, derived, and spectral algebraic geometry:
\begin{enumerate}[(i)]
\item The cases of quasi-compact quasi-separated spectral schemes and spectral algebraic spaces by Antieau--Gepner \cite[7.2]{MR3190610} and Lurie \cite[11.5.3.10]{lurie2018sag}, respectively. Also, the case of quasi-compact spectral Deligne-Mumford stacks with quasi-affine diagonal by Hall--Rydh \cite[9.4]{MR3705292}. 

\item Let $R$ be a commutative ring. Then \ref{Brauer spaces and Azumaya algebras for quasi-geometric spectral algebraic stacks} can be applied to the underlying quasi-geometric spectral algebraic stacks of quasi-geometric derived algebraic stacks over $R$ which admit a quasi-finite presentation (see \ref{underlying quasi-geometric spectral algebraic stacks}). In particular, we obtain the cases of ordinary quasi-compact algebraic stacks with quasi-finite and separated diagonal, and of quasi-compact derived Deligne-Mumford stacks with quasi-affine diagonal by Hall--Rydh \cite[9.3, 9.4]{MR3705292} (hence of quasi-compact quasi-separated derived schemes by To\"en \cite[5.1]{MR2957304}).
\end{enumerate}
\end{remark}

\begin{example}
	Let $p$ be a prime number. Let $\mathbb{F}_p$ denote a finite field of order $p$, and let $\mu_p$ denote the ordinary group scheme of $p$-th roots of unity over $\Spec \mathbb{F}_p$. Let $X$ be a quasi-affine spectral Deligne-Mumford stack over $\mathbb{F}_p$. Then \ref{Brauer spaces and Azumaya algebras for quasi-geometric spectral algebraic stacks} can be applied to the classifying stack of $\mu_p$ over $X$, in which case our result is new.
\end{example}

\begin{pg}
	Our approach to \ref{Brauer spaces and Azumaya algebras for quasi-geometric spectral algebraic stacks} is based on Lurie's reformulation of the work of Antieau--Gepner in terms of the theory of quasi-coherent stacks developed in \cite[10.1.1]{lurie2018sag}. Given a quasi-geometric spectral algebraic stack $X$ and an object $\calC$ of the $\infty$-category $\QStk^{\PSt}(X)$ of prestable quasi-coherent stacks (see \cite[10.1.2.4]{lurie2018sag}), the $\infty$-category $\QCoh(X; \calC)$ of global sections (see \cite[10.4.1.1]{lurie2018sag}) is a spectral analogue of the twisted derived dg-category of To\"en. This perspective leads to the following central definition of interest:
\end{pg}

\begin{definition}\label{twisted compact generation}
	A quasi-geometric spectral algebraic stack $X$ is of \emph{twisted compact generation} if it satisfies the following condition:
\begin{itemize}
\item[$(\ast)$] For each compactly generated stable quasi-coherent stack $\calC$ on $X$, the stable $\infty$-category $\QCoh(X; \calC)$ is compactly generated.
\end{itemize}
\end{definition}

\begin{remark}\label{beta-Thomason condition}
	Our definition of twisted compact generation \ref{twisted compact generation} is related to the $\beta$-Thomason condition on ordinary algebraic stacks of \cite[8.1]{MR3705292}. Let $\beta$ be a regular cardinal and consider the following straightforward generalization which we refer to as the $\beta$-Thomason condition on a quasi-geometric spectral algebraic stack $X$:
\begin{enumerate}[(i)]
\item The $\infty$-category $\QCoh(X)$ is compactly generated by a set of cardinality at most $\beta$. 
\item For every quasi-compact open subset $\calU\subseteq |X|$, there exists a compact object $F$ of $\QCoh(X)$ with support $|X|-\calU$.
\end{enumerate}
In the special case of ordinary quasi-compact quasi-separated algebraic stacks, the $\beta$-Thomason condition is equivalent to the following condition: for each quasi-compact open immersion $U\rightarrow X$, the full subcategory $\QCoh_{X-U}(X)\subseteq \QCoh(X)$ spanned by those objects which are supported on $|X|-|U|$ (see \ref{the full subcategory spanned by objects with prescribed support}) is compactly generated by a set of cardinality at most $\beta$. Although its spectral analogue is not evident (the author does not know if \cite[4.10]{MR3705292} is true in the spectral setting), we will see in \ref{twisted compact generation and global sections} and \ref{compact objects with prescribed support} that if a quasi-geometric spectral algebraic stack $X$ is of twisted compact generation, then it not only satisfies the $\beta$-Thomason condition for some $\beta$, but also the aforementioned condition on $\QCoh_{X-U}(X)$. 
\end{remark}

\begin{pg}
	The main ingredient in our proof of \ref{Brauer spaces and Azumaya algebras for quasi-geometric spectral algebraic stacks} is the following result, which is closely related to \cite[4.8]{MR2957304} (which asserts that for each quasi-compact quasi-separated derived scheme $X$ and each element $\alpha \in \HH^2_{\et}(X, \mathbb{G}_m)$, the $\alpha$-twisted derived dg-category $\mathrm{L}_\alpha(X)$ admits a compact generator):
\end{pg}

\begin{theorem}\label{twisted compact generation of spectral algebraic stacks}
	Let $X$ be a quasi-geometric spectral algebraic stack which admits a quasi-finite presentation. Then $X$ is of twisted compact generation. In particular, for each quasi-compact open subset $\calU\subseteq |X|$, there exists a compact object $F$ of $\QCoh(X)$ with support $|X|-\calU$. 
\end{theorem}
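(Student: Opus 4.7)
The plan is to prove twisted compact generation by combining descent along the quasi-finite presentation $f\colon \Spec A \to X$ with an induction on a finite stratification of $X$ by quasi-affine pieces built from $f$. Fix a compactly generated stable quasi-coherent stack $\calC$ on $X$. The task is to construct compact generators of $\QCoh(X;\calC)$, starting from the local situation: on a quasi-affine atlas $\Spec R \to X$, the category $\QCoh(\Spec R;\calC|_{\Spec R})$ identifies with a compactly generated stable $\infty$-category of modules over $\calC(\Spec R)$, so local compact generation is essentially free. The real content is to glue local compact generators into global ones.

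To carry this out, I would apply a spectral refinement of Zariski's main theorem to the quasi-finite faithfully flat presentation $f$, so as to factor it, after suitable refinement, through a finite flat cover composed with a quasi-compact open immersion. Combined with the quasi-affine diagonal hypothesis built into the quasi-geometric condition, this yields a finite filtration
\[
\emptyset = X_0 \subseteq X_1 \subseteq \cdots \subseteq X_n = X
\]
of $X$ by quasi-compact open substacks in which each consecutive excision $X_i \setminus X_{i-1}$ sits inside a Nisnevich-type square with a quasi-affine spectral scheme covering the new stratum. This is the natural spectral counterpart of the ``scallop decomposition'' argument used by Lurie for quasi-compact spectral Deligne--Mumford stacks with quasi-affine diagonal and of the Nisnevich-square approach used by Hall--Rydh for ordinary algebraic stacks admitting a quasi-finite presentation. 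I would then proceed by induction on $n$: the base case $n=1$ is the quasi-affine situation, while the inductive step follows from a Mayer--Vietoris argument for $\QCoh(-;\calC)$ applied to each Nisnevich square.

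The main obstacle will be verifying this Mayer--Vietoris step for the \emph{twisted} sections $\QCoh(-;\calC)$ rather than for ordinary $\QCoh(-)$: one must establish fppf descent for the functor $U \mapsto \QCoh(U;\calC|_U)$ along Nisnevich squares, and simultaneously show that pushforward along the quasi-finite flat leg of each square preserves compactness. The latter should reduce to these morphisms being quasi-finite of finite Tor-amplitude, hence perfect, while the former follows from the general descent theory of quasi-coherent stacks developed earlier in the paper. Granted this, compact generation propagates along the filtration, establishing twisted compact generation. The ``In particular'' clause is then immediate via the machinery foreshadowed in Remark \ref{beta-Thomason condition}: one applies twisted compact generation to a compactly generated stable quasi-coherent stack on $X$ whose global sections realize the full subcategory $\QCoh_{X-\calU}(X)\subseteq \QCoh(X)$ of objects supported on $|X|\setminus\calU$, and extracts a compact generator of that subcategory as the desired object $F$.
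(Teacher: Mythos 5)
Your overall strategy — construct a Nisnevich-style cover together with a finite flat cover, build a stacky scallop decomposition, and propagate compact generation along the strata by descent — is indeed the one the paper follows, so you have the right architecture. But there are two places where the proposal papers over genuine technical content.

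First, the decisive step is the existence of the presentation: a separated Nisnevich covering $p:W\to X$ and a representable finite, faithfully flat, locally almost of finite presentation morphism $q:\sfV\to W$ with $\sfV$ quasi-affine (this is \ref{a presentation for applying quasi-finite devissage}). You attribute this to a ``spectral refinement of Zariski's main theorem,'' but ZMT produces an open immersion followed by a \emph{finite} morphism, which need not be flat, and in any case does not by itself produce the Nisnevich cover $W$ over which the finite flat cover lives. What the paper actually does is prove the representability of a spectral Hilbert functor (Theorem \ref{spectral Hilbert functor}, via Artin's criterion: infinitesimal cohesiveness, nilcompleteness, and an almost perfect relative cotangent complex) and then isolate the open subfunctor $\Hilb^{\et}$ classifying finite flat clopen subschemes of $\Spec A$. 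Stratifying by fiber degree and taking the tautological section over each stratum yields the Nisnevich cover. This is the substantial content of Section~\ref{Sec: Excision Squares}, and it is not recoverable from ZMT alone.

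Second, the excision/Mayer--Vietoris step is subtler than ``fppf descent plus compactness of pushforward.'' The leg of the excision square you must control is the \'etale quasi-affine one, not a quasi-finite flat one; the required facts are that pullback along quasi-compact quasi-separated morphisms is compact (\ref{compact pullback functor}), that Nisnevich excision holds for global sections of prestable quasi-coherent stacks (\ref{Nisnevich excision of global section functor}), and — crucially — a Thomason--Neeman localization theorem, used via \ref{a special case of [2.11]{khanlecture3}} to lift compact objects of $\QCoh(U;j^\ast\calC)$ to compact objects of $\QCoh(X;\calC)$ up to the ambiguity $N\mapsto N\oplus N[1]$. Without that lifting argument the induction does not close. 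Your ``in particular'' paragraph correctly identifies applying twisted compact generation to $\calQ_{X-U}\otimes\calQ_X$ (so $\QCoh_{X-\calU}(X)$ is compactly generated, cf.\ \ref{quasi-coherent stacks and support} and \ref{twisted compact generation and global sections}), but note that to go from a set of compact generators to a single compact object with the prescribed support one still needs a constructibility argument over the affine atlas, as in \ref{compact objects with prescribed support}.
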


\begin{remark}\noindent
\begin{enumerate}[(i)]
\item According to \cite[10.3.2.1]{lurie2018sag}, if $\sfX$ is a quasi-compact quasi-separated spectral algebraic space and $\calC$ is a compactly generated prestable quasi-coherent stack on $\sfX$, then the $\infty$-category $\QCoh(\sfX; \calC)$ is compactly generated. In the case where $\calC$ is stable, we can deduce the statement from \ref{twisted compact generation of spectral algebraic stacks}.

\item Let $X$ be an ordinary quasi-compact algebraic stack with quasi-finite and separated diagonal. Then \cite[Theorem A]{MR3705292} shows that for each quasi-compact open subset $\calU\subseteq |X|$, there exists a compact complex $F \in \mathrm{D}_{\mathrm{qc}}(X)$ with support $|X|-\calU$ (cf. \ref{unbounded derived category}). This can be viewed as a special case of \ref{twisted compact generation of spectral algebraic stacks}. 

\item If $\sfX$ is a quasi-compact quasi-separated spectral algebraic space and $\calU\subseteq |\sfX|$ is a quasi-compact open subset, then \cite[11.1.2.1]{lurie2018sag} guarantees that there exists a perfect object $F \in \QCoh(X)$ with support $|\sfX|-\calU$. This can be obtained from \ref{twisted compact generation of spectral algebraic stacks}. 
\end{enumerate}
\end{remark}

\begin{pg}
	The main difficulty in the proof of \ref{twisted compact generation of spectral algebraic stacks} is that we do not know if the compact generation of $R$-linear $\infty$-categories (see \cite[D.1.2.1]{lurie2018sag}), where $R$ is an $\bbE_\infty$-ring, is local for the fpqc topology. On the other hand, in the setting of derived algebraic geometry, \cite[4.13]{MR2957304} (which asserts that the existence of a compact generator of a locally presentable dg-category over a simplicial commutative ring is local for the fppf topology) is essential to the proof of \cite[4.8]{MR2957304}. As Antieau--Gepner mentioned in \cite[p.1215]{MR3190610}, To\"en's proof of \cite[4.13]{MR2957304} makes use of quotients of simplicial commutative rings, and therefore it cannot be carried out in the spectral setting due to the lack of quotient construction of $\bbE_\infty$-rings. Nonetheless, Antieau--Gepner showed that the existence of a compact generator of an $R$-linear $\infty$-category, where $R$ is an $\bbE_\infty$-ring, is local for the \'etale topology (see \cite[6.16]{MR3190610}), and attributed the idea of the proof to Lurie of \cite[6.1]{lurie2011dagxi} in which the key ingredient is that the compact generation of $R$-linear $\infty$-category satisfies descent for the Nisnevich topology. Moreover, the notion of \emph{scallop decomposition} of \cite[2.5.3.1]{lurie2018sag}, which is closely related to the Nisnevich topology, plays a crucial role in the proof of \cite[10.3.2.1]{lurie2018sag}: a scallop decomposition of a spectral Deligne-Mumford stack $\sfX$ consists of a sequence of open immersions $\emptyset \simeq \sfU_0\rightarrow \sfU_1 \rightarrow \cdots \rightarrow \sfU_n \simeq \sfX$ such that for every $1\leq i \leq n$, there exists an excision square of spectral Deligne-Mumford stacks
$$
\Pull{\sfV}{\sfY}{\sfU_{i-1}}{\sfU_i}{}{}{}{}
$$
where $\sfY$ is affine and $\sfV$ is quasi-compact. However, the essential difficulty in extending \cite[10.3.2.1]{lurie2018sag} from quasi-compact quasi-separated spectral algebraic spaces to quasi-geometric spectral algebraic stacks is that the notion of scallop decomposition is designed to accommodate spectral algebraic spaces: more concretely, if a spectral Deligne-Mumford admits a scallop decomposition, then it must be a quasi-compact quasi-separated spectral algebraic space (see \cite[3.4.2.1]{lurie2018sag}). To address this difficulty, we develop a theory of the underlying topological space of quasi-geometric spectral algebraic stacks (see \ref{the induced topological space of quasi-geometric stacks}), so that we can extend the definitions of an excision square and a scallop decomposition to those stacks, without imposing the requirement that $\sfY$ appearing in the excision square above is affine (see \ref{excision squares of quasi-geometric stacks} and \ref{stacky scallop decompositions}). We will then prove \ref{twisted compact generation of spectral algebraic stacks} by generalizing the ``induction principle" for ordinary algebraic stacks by Hall--Rydh \cite[Theorem E]{MR3754421} to quasi-geometric spectral algebraic stacks. More precisely, we show the representability of the spectral Hilbert functor to provide a special presentation of quasi-geometric spectral algebraic stacks (which is an analogue of \cite[4.1]{MR3754421} in the spectral setting; see \ref{a presentation for applying quasi-finite devissage}), from which we are reduced to proving that the property of being of twisted compact generation satisfies descent for finite morphisms and excision squares of quasi-geometric spectral algebraic stacks (see \ref{finite morphisms and twisted compact generation} and \ref{excision squares and twisted compact generation}).
\end{pg}

\begin{remark}
	As a consequence of our proof of \ref{twisted compact generation of spectral algebraic stacks}, we will see in \ref{compact generation is local for the quasi-finite topology} that for stable $R$-linear $\infty$-categories, where $R$ is a connective $\bbE_\infty$-ring, the property of being compactly generated satisfies descent with respect to the maps which are quasi-finite, faithfully flat, and almost of finite presentation. This is a generalization of \cite[D.5.3.1]{lurie2018sag} in the stable case, which asserts that the property is local for the \'etale topology.
\end{remark}

\subsection{Outline of the paper}
	In Section \ref{Sec: Spectral Algebraic Stacks}, we formulate quasi-compact algebraic stacks with quasi-affine diagonal in the setting of derived and spectral algebraic geometry, and study the relationship between them. In Section \ref{Sec: Points of Quasi-geometric Spectral Algebraic Stacks}, we define the underlying topological space of quasi-geometric stacks and establish some of its basic properties. In Section \ref{Sec: Excision Squares}, we first introduce excision squares, stacky scallop decompositions, and Nisnevich coverings of quasi-geometric spectral algebraic stacks. We then show the representability of the Hilbert functor in the spectral setting to provide a special presentation of those stacks. Section \ref{Sec: Twisted Compact Generations} is devoted to introducing the notion of twisted compact generation for quasi-geometric spectral algebraic stacks and developing some descent results. In Section \ref{Sec: Brauer Spaces and Azumaya Algebras}, we study the extended Brauer groups and Azumaya algebras in spectral algebraic geometry. 

\subsection{Conventions}
	We will follow the set-theoretic convention of \cite{MR2522659}. 
	
\subsection{Acknowledgements} 
	The author is grateful to Benjamin Antieau, Jack Hall, and Bertrand To\"en for helpful comments and conversations. This work was supported by IBS-R003-D1.

\section{Spectral Algebraic Stacks}\label{Sec: Spectral Algebraic Stacks}
	In this section, we introduce the basic objects of study in this paper: quasi-geometric spectral algebraic stacks. We also investigate the relationship between derived algebraic geometry and spectral algebraic geometry, so that we can incorporate quasi-geometric derived algebraic stacks (which are defined similarly) into our study.
	
\begin{pg}
	For an $\bbE_\infty$-ring $R$, let $\CAlg_R$ denote the $\infty$-category $\bbE_\infty$-algebras over $R$ (see \cite[7.1.3.8]{lurie2017ha}). The following big \'etale topology on the opposite $\infty$-category $\CAlg_R^{\op}$ (whose existence is evident from the small \'etale topology \cite[B.6.2.1]{lurie2018sag}) is ubiquitous in \cite{lurie2018sag}, but not mentioned explicitly. We record it here for reference:
\end{pg}

\begin{lemma}\label{big etale topology}
	Let $R$ be an $\bbE_\infty$-ring. Then there exits a Grothendieck topology on the $\infty$-category $\CAlg_R^{\op}$ which can be characterized as follows: if $A$ is an $\bbE_\infty$-algebra over $R$, then a sieve $\calC \subset (\CAlg_R^{\op})_{/A} \simeq \CAlg_A^{\op}$ is a covering if and only if it contains a finite collection of maps $\{A\rightarrow A_i\}_{1\leq i \leq n}$ for which the induced map $A\rightarrow \prod_{1\leq i \leq n}A_i$ is faithfully flat and \'etale.
\end{lemma}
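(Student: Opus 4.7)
The plan is to verify that the collection $T(A)$ of sieves on each $A \in \CAlg_R$ described in the statement (i.e.\ those containing a finite family $\{A \to A_i\}$ such that $A \to \prod_i A_i$ is faithfully flat and étale) defines a Grothendieck topology on $\CAlg_R^{\op}$. This amounts to checking the three standard axioms: (i) the maximal sieve on $A$ lies in $T(A)$; (ii) $T$ is stable under pullback along any $f\colon A \to B$; and (iii) $T$ satisfies local character / transitivity.

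Axiom (i) is immediate since $\mathrm{id}_A$ is faithfully flat and étale. For axiom (ii), starting from a covering family $\{A \to A_i\}_{1\leq i\leq n}$ with $A \to \prod_i A_i$ faithfully flat and étale, the pullback sieve $f^{\ast}\calC$ on $B$ contains the family $\{B \to B \otimes_A A_i\}$; I would identify $\prod_i(B \otimes_A A_i)$ with the base change $B \otimes_A (\prod_i A_i)$ and then invoke the stability of faithful flatness and étaleness under base change (standard; see \cite{lurie2018sag}) to conclude that $B \to \prod_i(B \otimes_A A_i)$ is again faithfully flat and étale. For axiom (iii), if $\{A \to A_i\}$ witnesses $\calC \in T(A)$ and, for each $i$, a family $\{A_i \to A_{ij}\}_j$ contained in the pullback of a candidate sieve $\calD$ witnesses that the pullback lies in $T(A_i)$, then $\{A \to A_{ij}\}_{i,j}$ lies in $\calD$ (sieves are closed under precomposition) and the associated map $A \to \prod_{i,j} A_{ij}$ factors as $A \to \prod_i A_i \to \prod_i \prod_j A_{ij}$. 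Both factors are faithfully flat and étale — the second because a finite componentwise product of faithfully flat étale morphisms is again in the class — and the class is closed under composition, so $\calD \in T(A)$.

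The only delicate point is the canonical equivalence
\[ B \otimes_A \prod_i A_i \;\simeq\; \prod_i (B \otimes_A A_i), \]
used in (ii), together with the analogous stability-under-finite-product claim used in (iii). I would derive this from the fact that in the presentable stable $\infty$-category $\Mod_A$ finite products agree with finite coproducts (biproducts), and hence are preserved by the colimit-preserving functor $B \otimes_A (-)$, combined with the observation that finite products in $\CAlg_A$ are created by the forgetful functor to $\Mod_A$. Once this compatibility is in hand, the verification parallels the proof of the small étale analogue \cite[B.6.2.1]{lurie2018sag} essentially verbatim, the only difference being that we work on the big category $\CAlg_R^{\op}$ rather than a small affine étale site; I therefore expect no serious obstacle beyond establishing the product–tensor compatibility.
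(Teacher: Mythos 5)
Your verification is correct and is precisely the direct check of the Grothendieck-topology axioms that the paper treats as ``evident from the small \'etale topology'' (\cite[B.6.2.1]{lurie2018sag}), supplying the details — base-change stability, closure under composition, and the product–tensor compatibility via biproducts in $\Mod_A$ — that the paper's remark elides. The approach matches what the cited reference does for the small site, so there is no substantive divergence.
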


\begin{remark}\label{the infty-category of etale sheaves over R}
	In the case where $R$ is connective, the same proof provides an apparent analogue for the $\infty$-category $\CAlg^{\cn}_R$ of connective $\bbE_\infty$-algebras over $R$. Let $\widehat{\Shv}_{\et}(\CAlg^{\cn}_R) \subseteq \Fun(\CAlg^{\cn}_R, \widehat{\SSet})$ denote the full subcategory spanned by the \'etale sheaves (here $\widehat{\SSet}$ denotes the $\infty$-category of (not necessarily small) spaces; see \cite[1.2.16.4]{MR2522659}).
\end{remark}

\begin{pg}\label{properties of representable morphisms}
	Let $R$ be	a connective $\bbE_\infty$-ring. We say that a morphism $X\rightarrow Y$ in $\Fun(\CAlg^{\cn}_R, \widehat{\SSet})$ is \emph{representable} if, for every connective $R$-algebra $R'$ and every morphism $\Spec R' \rightarrow Y$, the fiber product $X\times_Y \Spec R'$ is representable by a spectral Deligne-Mumford stack over $\Spec R$ (cf. \cite[6.3.2.1]{lurie2018sag}). Let $P$ be a property of morphism of spectral Deligne-Mumford stacks which is local on the target with respect to the \'etale topology \cite[6.3.1.1]{lurie2018sag} and stable under base change \cite[6.3.3.1]{lurie2018sag}. We say that a representable morphism $X\rightarrow Y$ \emph{has the property P} if, for every connective $\bbE_\infty$-algebra $R'$ over $R$ and every morphism $\Spec R' \rightarrow Y$, the projection $X\times_Y \Spec R' \rightarrow \Spec R'$, which can be identified with a morphism of spectral Deligne-Mumford stacks, has the property $P$ (cf. \cite[6.3.3.3]{lurie2018sag}).
\end{pg}

\begin{pg}
	According to \cite[9.1.0.1]{lurie2018sag}, a \emph{quasi-geometric stack} is a functor $X:\CAlg^{\cn} \rightarrow \widehat{\SSet}$ satisfying the following conditions:
\begin{enumerate}[(i)]
\item The functor $X$ is a sheaf for the fpqc topology of \cite[B.6.1.3]{lurie2018sag}.
\item The diagonal $\Delta: X \rightarrow X \times X$ is representable and quasi-affine (see \cite[6.3.3.6]{lurie2018sag}).
\item There exists a faithfully flat morphism $\Spec A \rightarrow X$, where $A$ is a connective $\bbE_\infty$-ring; see \cite[6.3.3.7]{lurie2018sag}.
\end{enumerate}
We now introduce a special class of quasi-geometric stacks, called \emph{quasi-geometric spectral algebraic stacks}. This collection of quasi-geometric stacks admits a ``smooth covering". There are at least two different ways to construct a suitable ``smoothness" in the setting of spectral algebraic geometry: for example, fiber smoothness and differentially smoothness (see \cite[11.2.5.5]{lurie2018sag}). Fiber smooth morphisms are closely related to smooth morphisms in the classical algebraic geometry (see \ref{characterization of fiber smooth morphism}), and we adopt the fiber smoothness in our definition of quasi-geometric spectral algebraic stacks (the terminology is not standard):
\end{pg}

\begin{definition}\label{quasi-geometric spectral algebraic stack}
	Let $R$ be	a connective $\bbE_\infty$-ring. A \emph{quasi-geometric spectral algebraic stack over $R$} is a functor $X:\CAlg^{\cn}_R \rightarrow \widehat{\SSet}$ which satisfies the following conditions:
\begin{enumerate}[(i)]
\item\label{fpqc sheaf} The functor $X$ is a sheaf for the fpqc topology.
\item\label{quasi-affine diagonal} The diagonal morphism $\Delta: X \rightarrow X \times X$ is representable and quasi-affine.
\item\label{smooth cover} There exist a connective $\bbE_\infty$-algebra $A$ over $R$ and a morphism $\Spec A \rightarrow X$ which is fiber smooth and surjective. 
\end{enumerate}
In the special case where $R$ is the sphere spectrum (that is, an initial object of $\CAlg^{\cn}$), we simply say that $X$ is a \emph{quasi-geometric spectral algebraic stack}. In other words, a quasi-geometric spectral algebraic stack is a quasi-geometric stack which satisfies condition (\ref{smooth cover}). 
\end{definition}

\begin{remark}\label{quasi-geometric spectral algebraic stacks over any base} 
	Let $\Fun(\CAlg^{\cn}, \widehat{\SSet})_{/R}$ denote the slice $\infty$-category $\Fun(\CAlg^{\cn}, \widehat{\SSet})_{/\Spec R}$. Let $X':\CAlg^{\cn}_R\rightarrow \widehat{\SSet}$ be the image of an object $(X\rightarrow \Spec R) \in \Fun(\CAlg^{\cn}, \widehat{\SSet})_{/R}$ under the equivalence of $\infty$-categories $\Fun(\CAlg^{\cn}, \widehat{\SSet})_{/R} \simeq \Fun(\CAlg^{\cn}_R, \widehat{\SSet})$. Then $X'$ is a quasi-geometric spectral algebraic stack over $R$ if and only if the functor $X$ is a quasi-geometric spectral algebraic stack (over the sphere spectrum). 
\end{remark}

\begin{example}
	Quasi-geometric spectral algebraic stacks exist in abundance: 
\begin{enumerate}[(i)]
\item Every quasi-geometric spectral Deligne-Mumford stack is a quasi-geometric spectral algebraic stack because every \'etale morphism is fiber smooth \cite[11.2.3.2]{lurie2018sag}. 
\item Let $R$ be a commutative ring. We will see in \ref{quasi-geometric spectral algebraic stacks from DAG} that for each quasi-geometric derived algebraic stack over $R$, there is an underlying quasi-geometric spectral algebraic stack over $R$; in particular, each ordinary quasi-compact algebraic stack over $R$ with quasi-affine diagonal can be regarded as a quasi-geometric spectral algebraic stack over $R$.
\end{enumerate}
\end{example}

\begin{pg}
	Our choice of ``smooth coverings" in the definition of quasi-geometric spectral algebraic stacks \ref{quasi-geometric spectral algebraic stack} is motivated by the following characterization of fiber smoothness in terms of the $0$-truncations of \cite[1.4.6.5]{lurie2018sag}:
\end{pg}

\begin{lemma}\label{characterization of fiber smooth morphism}
	Let $f:\sfX\rightarrow \sfY$ be a morphism of spectral algebraic spaces. Then $f$ is fiber smooth if and only if it is flat and the underlying morphism of ordinary algebraic spaces $\tau_{\leq 0}(f)$ is smooth.
\end{lemma}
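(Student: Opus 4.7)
Both conditions are étale-local on $\sfX$ and $\sfY$, so I reduce to the affine case $\sfX = \Spec B$, $\sfY = \Spec A$ with $A,B$ connective $\bbE_\infty$-rings. Fiber smoothness in the sense of \cite[11.2.5.5]{lurie2018sag} decomposes into three ingredients: $f$ is flat, locally almost of finite presentation, and has smooth geometric fibers $\sfX_k := \sfX \times_{\sfY} \Spec k$ for every field $k$ and every map $\Spec k \to \sfY$. Classical smoothness of $\tau_{\leq 0}(f)$ likewise decomposes into flatness, local finite presentation, and smoothness of every geometric fiber of the underlying map $\Spec \pi_0 B \to \Spec \pi_0 A$. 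The plan is to show that, granted the common flatness hypothesis, each ingredient of fiber smoothness of $f$ is equivalent to the corresponding ingredient of classical smoothness of $\tau_{\leq 0}(f)$.

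For the implication ($\Rightarrow$), flatness of $f$ is part of its definition and passes to $\pi_0$ since flatness forces $\pi_n B \cong \pi_n A \otimes_{\pi_0 A} \pi_0 B$, whose restriction to $n=0$ is exactly the classical flatness of $\pi_0 A \to \pi_0 B$. Local almost finite presentation of $f$ transports to local finite presentation of $\tau_{\leq 0}(f)$ by truncation. For the fibers, note that for every geometric point $\Spec k \to \sfY$, flatness of $f$ together with the discreteness of $k$ forces $\sfX_k$ to be $0$-truncated, so that $\sfX_k \simeq \tau_{\leq 0}(\sfX) \times_{\tau_{\leq 0}(\sfY)} \Spec k$. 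Smoothness of this classical fiber is then the same data as smoothness of $\sfX_k$ appearing in the definition of fiber smoothness.

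For the converse ($\Leftarrow$), assume $f$ is flat and $\tau_{\leq 0}(f)$ is classically smooth. The essential input, which I will cite from \cite{lurie2018sag}, is that a flat morphism of connective $\bbE_\infty$-rings $A \to B$ is locally almost of finite presentation if and only if $\pi_0 A \to \pi_0 B$ is of finite presentation as a map of ordinary commutative rings. Granting this, smoothness of $\tau_{\leq 0}(f)$ supplies local almost finite presentation of $f$, while the fiber analysis from the forward direction identifies each geometric fiber of $f$ with the corresponding fiber of $\tau_{\leq 0}(f)$, which is smooth by hypothesis. This completes the verification that $f$ is fiber smooth.

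The only genuinely non-formal step is the comparison between local almost finite presentation at the $\bbE_\infty$-level and classical finite presentation on $\pi_0$ for flat morphisms; this is the main obstacle and will be handled by citation to \cite{lurie2018sag}. Everything else is bookkeeping about how flatness mediates properties between $f$ and its $0$-truncation, together with the uniform decomposition of both fiber smoothness and classical smoothness into flatness, local finite presentation, and smoothness of geometric fibers.
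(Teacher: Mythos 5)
Your proposal is correct and follows the same route as the paper: reduce to the affine case by \'etale-locality, then match the affine definitions of fiber smoothness and classical smoothness of $\tau_{\leq 0}(f)$. The paper dispatches the affine case by directly citing \cite[11.2.3.5, 11.2.4.1]{lurie2018sag}, while you unpack the content of those citations by decomposing both notions into flatness, a finite-presentation condition, and a geometric-fiber condition and matching them piece by piece; the one substantive ingredient in each treatment is the same fact that, for flat maps of connective $\bbE_\infty$-rings, almost finite presentation is detected on $\pi_0$.
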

\begin{proof}
	The assertion is \'etale-local on $\sfX$ and $\sfY$, so we may assume that $\sfX$ and $\sfY$ are affine. In this case, the desired result is an immediate consequence of \cite[11.2.3.5]{lurie2018sag} and \cite[11.2.4.1]{lurie2018sag}. 
\end{proof}

\begin{pg} 
	In classical algebraic geometry, the big smooth topology does not play as significant role as the big \'etale topology. This is in part due to the fact that a smooth surjection of ordinary schemes \'etale-locally admits a section (see \cite[17.16.3]{MR0238860}), and therefore the topoi induced by these topologies are equivalent. In the spectral setting, an analogous statement holds for differentially smooth morphisms by virtue of \cite[4.47]{MR3190610}. The following topology defined by fiber smooth maps (whose existence can be proven in the same way as \cite[B.6.1.3]{lurie2018sag}) is nevertheless of interest to us in this paper:
\end{pg}

\begin{lemma}\label{fiber smooth topology}
	Let $R$ be a connective $\bbE_\infty$-ring. Then there exists a Grothendieck topology on the $\infty$-category $(\CAlg^{\cn}_R)^{\op}$ which can be described as follows: if $A$ is a connective $\bbE_\infty$-algebra over $R$, then a sieve $\calC \subset (\CAlg_R^{\op})_{/A} \simeq \CAlg_A^{\op}$ is a covering if and only if it contains a finite collection of maps $\{A\rightarrow A_i\}_{1\leq i \leq n}$ for which the induced map $A\rightarrow \prod_{1\leq i \leq n}A_i$ is faithfully flat and fiber smooth.
\end{lemma}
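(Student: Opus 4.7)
The plan is to follow Lurie's proof of the existence of the fpqc topology \cite[B.6.1.3]{lurie2018sag} essentially verbatim, with the class of faithfully flat morphisms replaced by the class of morphisms that are both faithfully flat and fiber smooth. Specifically, I would declare a sieve $\calC \subseteq (\CAlg_R^{\op})_{/A} \simeq \CAlg_A^{\op}$ to be a covering precisely when it contains some finite collection $\{A \to A_i\}_{1 \leq i \leq n}$ whose induced map $A \to \prod_{1 \leq i \leq n} A_i$ is faithfully flat and fiber smooth, and then verify the three axioms of a Grothendieck topology: the maximal sieve is always a covering; coverings are stable under pullback; and coverings satisfy local character.

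The first axiom is immediate from the fact that the identity map is both faithfully flat and fiber smooth. For stability under pullback, given a covering sieve on $A$ containing a family $\{A \to A_i\}$ and an arbitrary morphism $A \to B$ of connective $\bbE_\infty$-algebras over $R$, the pulled-back sieve on $B$ contains $\{B \to B \otimes_A A_i\}$, and the induced morphism
\[
B \;\longrightarrow\; \prod_{1\leq i \leq n}\bigl(B \otimes_A A_i\bigr) \;\simeq\; B \otimes_A \prod_{1\leq i \leq n} A_i
\]
is again faithfully flat and fiber smooth because both properties are preserved under base change (for fiber smoothness this is a consequence of the results of \cite[11.2.3]{lurie2018sag}). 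For the local character axiom, one composes a covering family $\{A \to A_i\}$ with covering families $\{A_i \to A_{ij}\}_{1\leq j \leq m_i}$ on each $A_i$ and checks that the composite map $A \to \prod_{i,j} A_{ij}$ is faithfully flat and fiber smooth, which follows from closure of both classes under composition and under finite products (so that $\prod_i A_i \to \prod_i \prod_j A_{ij}$ remains faithfully flat and fiber smooth).

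The main technical point, as in the proof of \cite[B.6.1.3]{lurie2018sag}, is not any individual verification but rather the sieve-theoretic bookkeeping: one must observe that the described collection of sieves is intrinsic (independent of the particular generating family chosen), and that passing back and forth between a family and the sieve it generates is compatible with the three axioms. This is handled exactly as in the fpqc case, and no new $\infty$-categorical difficulty arises beyond what was already overcome there. Consequently, I expect the proof to amount to little more than a reference to \cite[B.6.1.3]{lurie2018sag} together with the observation that the class of morphisms which are simultaneously faithfully flat and fiber smooth is closed under composition, base change, and finite products.
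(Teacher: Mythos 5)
Your proposal is correct and matches the paper's approach: the paper offers no explicit proof, simply noting that the existence of the fiber smooth topology "can be proven in the same way as [B.6.1.3]" of Lurie's SAG, which is exactly what you spell out. The key closure properties you invoke (stability of faithful flatness and fiber smoothness under composition, base change, and finite products) are the correct and only inputs needed beyond the fpqc case.
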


\begin{remark}\label{the infty-category of fiber smooth sheaves over R}
	We refer to the Grothendieck topology of \ref{fiber smooth topology} as the \emph{fiber smooth topology on $(\CAlg^{\cn}_R)^{\op}$}. We will see later in \ref{the necessity of fiber smooth topology} that it has the virtue of connecting quasi-coherent stacks in derived algebraic geometry and spectral algebraic geometry. Let $\widehat{\Shv}_{\fsm}(\CAlg^{\cn}_R) \subseteq \Fun(\CAlg^{\cn}_R, \widehat{\SSet})$ denote the full subcategory spanned by the fiber smooth sheaves.
\end{remark}

\begin{remark}\label{fiber smooth sheaf condition for quasi-geometric spectral algebraic stacks}
	In the situation of \ref{quasi-geometric spectral algebraic stack}, we can replace (\ref{fpqc sheaf}) by the apparently weaker condition that $X$ is a sheaf for the fiber smooth topology. Indeed, if $X$ satisfies this condition along with conditions (\ref{quasi-affine diagonal}) and (\ref{smooth cover}) of \ref{quasi-geometric spectral algebraic stack}, then $X$ is a (hypercomplete) sheaf with respect to the fpqc topology; this can be established by mimicking the proof of \cite[9.1.4.3]{lurie2018sag}.
\end{remark}

\begin{pg}
	For the rest of this section, we study how to deal with derived stacks in the context of spectral algebraic geometry. In particular, we will see in \ref{quasi-geometric spectral algebraic stacks from DAG} that one can associate a quasi-geometric spectral algebraic stack to each quasi-geometric derived algebraic stack. This connection has the virtue of allowing us to apply our main theorems \ref{Brauer spaces and Azumaya algebras for quasi-geometric spectral algebraic stacks} and \ref{twisted compact generation of spectral algebraic stacks}---which are described in terms of quasi-geometric spectral algebraic stacks---to quasi-geometric derived algebraic stacks as well.
\end{pg}

\begin{pg}
	Let $R$ be a commutative ring. Let $\CAlg^\Delta_R$ denote the $\infty$-category of simplicial commutative $R$-algebras (see, for example, \cite[25.1.1.1]{lurie2018sag}). It follows from \cite[25.1.2.1]{lurie2018sag} that there is a forgetful functor 
$$
\Theta_R:\CAlg^\Delta_R \rightarrow \CAlg^{\cn}_R.
$$
We denote the image of $A\in \CAlg^\Delta_R$ under $\Theta_R$ by $A^\circ$ and refer to it as the \emph{underlying $\bbE_\infty$-algebra} of $A$. By virtue of \cite[4.3.3.7]{MR2522659}, the restriction functor $\Theta_R^\ast: \Fun(\CAlg^{\cn}_R, \widehat{\SSet}) \rightarrow \Fun(\CAlg^\Delta_R, \widehat{\SSet})$ admits a left adjoint ${\Theta_R}_!$ which carries each functor $X:\CAlg^\Delta_R\rightarrow \widehat{\SSet}$ to its left Kan extension along $\Theta_R$. 
\end{pg}

\begin{remark}\label{big etale topology for simplicial commutative rings}
	There are evident analogues of the \'etale and fpqc topologies (see \ref{big etale topology} and \cite[B.6.1.3]{lurie2018sag}) for the $\infty$-category $(\CAlg^\Delta_R)^{\op}$. 
\end{remark}

\begin{pg}
	Let $\mathring{\Theta}:\CAlg^\Delta\rightarrow \CAlg^{\cn}$ denote the composition of $\Theta_\mathbb{Z}$ with the forgetful functor $\CAlg^{\cn}_\mathbb{Z}\rightarrow \CAlg^{\cn}$. To every derived Deligne-Mumford stack $\sfX=(\calX, \calO_{\calX})$ (which can be defined as in \cite[1.4.4.2]{lurie2018sag}, using $\CAlg^\Delta$ in place of $\CAlg^{\cn}$), one can associate a spectral Deligne-Mumford stack $(\calX, \mathring{\Theta} \circ \calO_{\calX})$, which we denote by $\sfX^\circ$ and refer to as the \emph{underlying spectral Deligne-Mumford stack} of $\sfX$. We can regard this construction as a functor from the $\infty$-category $\DerDM$ of derived Deligne-Mumford stacks to the $\infty$-category $\SpDM$ of spectral Deligne-Mumford stacks; it carries the affine spectrum of a simplicial commutative ring $A$ to the affine spectrum of the underlying $\bbE_\infty$-ring $A^\circ$. Let $\DerDM_{/R}$ and $\SpDM_{/R}$ denote the slice $\infty$-categories $\DerDM_{/\Spec R}$ and $\SpDM_{/\Spec R}$, respectively. We then have a functor 
$$
\DerDM_{/R} \rightarrow \SpDM_{/R}
$$
which carries a derived Deligne-Mumford stack $\sfX$ over $R$ to its underlying spectral Deligne-Mumford stack $\sfX^\circ$ over $R$. Let $L_{\et}:\Fun(\CAlg^{\cn}_R,\widehat{\SSet}) \rightarrow \widehat{\Shv}_{\et}(\CAlg^{\cn}_R)$ denote a left adjoint to the inclusion (see \ref{the infty-category of etale sheaves over R}). To extend the construction $\sfX\mapsto \sfX^\circ$ to derived (algebraic) stacks, we need the following ``functor of points" perspective (cf. \cite[9.27]{lurie2011dagvii}):
\end{pg}

\begin{lemma}\label{from DAG to SAG via functors} 
	Let $R$ be a commutative ring. Then the composite functor
$$
L_{\et}\circ {\Theta_R}_!:\Fun(\CAlg^\Delta_R, \widehat{\SSet}) \rightarrow \Fun(\CAlg^{\cn}_R, \widehat{\SSet}) \rightarrow \widehat{\Shv}_{\et}(\CAlg^{\cn}_R)
$$
restricts to the functor $\DerDM_{/R} \rightarrow \SpDM_{/R}$.  
\end{lemma}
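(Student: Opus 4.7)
The plan is to reduce the claim to the affine case by \'etale descent, and then identify the affine case via the universal property of the left Kan extension together with Yoneda. First I would note the following compatibility, which is the technical backbone of the argument: the functor $\Theta_R$ sends \'etale morphisms of simplicial commutative $R$-algebras to \'etale morphisms of connective $\bbE_\infty$-algebras and commutes with the formation of tensor products along such morphisms (this is implicit in the treatment of simplicial commutative rings in \cite[25.1]{lurie2018sag}). Consequently, $\Theta_R^\ast$ carries \'etale sheaves to \'etale sheaves, so its left adjoint ${\Theta_R}_!$ takes \'etale-local equivalences to \'etale-local equivalences. The composite $L_{\et}\circ{\Theta_R}_!$ therefore factors through the sheafification $L^\Delta_{\et}: \Fun(\CAlg^\Delta_R,\widehat{\SSet})\to\widehat{\Shv}_{\et}(\CAlg^\Delta_R)$ and, being a composition of left adjoints, preserves colimits.

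For the affine case, let $A\in\CAlg^\Delta_R$. For any $F\in\Fun(\CAlg^{\cn}_R,\widehat{\SSet})$, the adjunction and Yoneda give
$$
\Map({\Theta_R}_!(\Spec A), F) \simeq \Map(\Spec A, \Theta_R^\ast F) \simeq F(A^\circ) \simeq \Map(\Spec A^\circ, F),
$$
so ${\Theta_R}_!(\Spec A)\simeq \Spec A^\circ$. Because affine spectra of connective $\bbE_\infty$-rings are already sheaves for the \'etale topology, $L_{\et}$ acts trivially and we obtain $L_{\et}({\Theta_R}_!(\Spec A)) \simeq \Spec A^\circ = (\Spec A)^\circ$, which matches the functor $\DerDM_{/R}\to\SpDM_{/R}$ on affines.

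To handle a general derived Deligne-Mumford stack $\sfX\in\DerDM_{/R}$, I would choose an \'etale atlas $\Spec A_0\to\sfX$ and form its \v{C}ech nerve $\Spec A_\bullet$. By \'etale descent for $\DerDM_{/R}$, the functor of points of $\sfX$ is the geometric realization $|\Spec A_\bullet|$ taken in $\widehat{\Shv}_{\et}(\CAlg^\Delta_R)$. Applying $L_{\et}\circ{\Theta_R}_!$ and using the compatibility established in the first step together with the affine case yields
$$
(L_{\et}\circ{\Theta_R}_!)(\sfX)\simeq |L_{\et}(\Spec A_\bullet^\circ)|\simeq |\Spec A_\bullet^\circ|,
$$
with the final colimit now formed in $\widehat{\Shv}_{\et}(\CAlg^{\cn}_R)$. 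Since the construction $(-)^\circ$ sends \'etale morphisms of derived Deligne-Mumford stacks to \'etale morphisms of spectral Deligne-Mumford stacks and preserves the relevant fiber products, $\Spec A_\bullet^\circ$ is precisely the \v{C}ech nerve of the \'etale cover $\Spec A_0^\circ\to\sfX^\circ$. Hence \'etale descent for $\SpDM_{/R}$ identifies the above colimit with the functor of points of $\sfX^\circ$, completing the proof.

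The principal obstacle I anticipate is the interaction between $\Theta_R$ and the \'etale topologies on both sides: specifically, that $\Theta_R$ preserves \'etale morphisms, faithfully flat covers, and pushouts along \'etale maps, so that \v{C}ech nerves are transported correctly. Once this compatibility is invoked from \cite{lurie2018sag}, the remainder is a formal manipulation of left Kan extensions, sheafifications, and descent.
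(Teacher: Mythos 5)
Your affine step (computing ${\Theta_R}_!(\Spec A)\simeq\Spec A^\circ$ via adjunction and Yoneda, then noting that affine spectra are already \'etale sheaves) is exactly what the paper means by ``it follows immediately that $\calX_0$ contains all affine objects,'' and both proofs exploit that $L_{\et}\circ{\Theta_R}_!$ is a left adjoint. The divergence is in the globalization step, and that is where your argument has a genuine gap.

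You propose to present a general derived Deligne--Mumford stack $\sfX$ as the geometric realization of the \v{C}ech nerve $\Spec A_\bullet$ of an affine \'etale atlas $\Spec A_0\to\sfX$, apply the colimit-preserving functor, and invoke the affine case termwise. But the terms $\Spec A_0\times_{\sfX}\cdots\times_{\sfX}\Spec A_0$ are \emph{not} affine unless the diagonal of $\sfX$ is affine (which fails already for non-separated schemes and is not part of the hypotheses here): for $n\geq 1$ they are only derived algebraic spaces or more general Deligne--Mumford stacks. Identifying $(L_{\et}\circ{\Theta_R}_!)(\Spec A_n)$ with $(\Spec A_n)^\circ$ for such terms is precisely an instance of the lemma you are trying to prove, so the argument is circular as stated. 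You could repair it either by passing to an affine \'etale \emph{hypercover} (which requires knowing that $h_{\sfX}$ and $h_{\sfX^\circ}$ satisfy \'etale hyperdescent), or, as the paper does, by working inside the underlying $\infty$-topos $\calX$ of $\sfX$: the paper considers the full subcategory $\calX_0\subseteq\calX$ of objects $U$ for which the comparison map is an equivalence for $\sfX_U$, observes it contains the affines, and invokes \cite[1.4.7.9]{lurie2018sag} to reduce to showing $\calX_0$ is closed under small colimits (using that the transition maps in $\{\sfX_{U_\alpha}^\circ\}$ are \'etale to identify the target colimit). That colimit-closure argument handles arbitrary objects of $\calX$ at once and never needs the \v{C}ech nerve of the atlas to be affine, which is exactly what your approach is missing.
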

\begin{proof}
	Suppose we are given a derived Deligne-Mumford stack $\sfX=(\calX, \calO_{\calX})$ over $R$. Let $h_{\sfX}:\CAlg^\Delta_R \rightarrow \widehat{\SSet}$ denote the functor represented by $\sfX$ (given by the formula $h_{\sfX}(A)=\Map_{\DerDM_{/R}}(\Spec A, \sfX)$), and define $h_{\sfX^\circ}$ similarly. We wish to show that the natural morphism of functors
$$
(\ast_X): L_{\et}({\Theta_R}_! h_{\sfX}) \rightarrow h_{\sfX^\circ}
$$
is an equivalence (note that $h_{\sfX^\circ}$ is a sheaf for the \'etale topology). Let $\calX_0$ be the full subcategory of $\calX$ spanned by those objects $U\in \calX$ for which $(\ast_{X_U})$ is an equivalence, where $X_U:\CAlg^\Delta_R \rightarrow \widehat{\SSet}$ denotes the functor represented by the derived Deligne-Mumford stack $\sfX_U=(\calX_{/U}, {\calO_{\calX}} |_U)$ over $R$. It follows immediately that $\calX_0$ contains all affine objects $U \in \calX$. By virtue of \cite[1.4.7.9]{lurie2018sag}, it will suffice to show that $\calX_0$ is closed under small colimits in $\calX$. To prove this, suppose we are given a small diagram $\{U_\alpha\}$ in $\calX_0$ having a colimit $U\in \calX$. We then have a commutative diagram in $\widehat{\Shv}_{\et}(\CAlg^{\cn}_R)$
$$
\Pull{\colim L_{\et}{\Theta_R}_! h_{\sfX_{U_\alpha}}}{\colim h_{\sfX_{U_\alpha}^\circ}}{L_{\et}{\Theta_R}_! h_{\sfX_U}}{h_{\sfX_U^\circ}.}{}{}{}{}
$$
Since the transition morphisms in the diagram $\{\sfX_{U_\alpha}^\circ\}$ are \'etale, the right vertical arrow is an equivalence. Combing the analogous equivalence for the diagram $\{\sfX_{U_\alpha}\}$ of derived Deligne-Mumford stacks with the fact that the composition $L_{\et}\circ {\Theta_R}_!$ commutes with small colimits, we see that the left vertical arrow is also an equivalence, thereby completing the proof.
\end{proof}

\begin{remark}\label{from DAG to SAG via functors in the case of quasi-geometric stacks} 
	Let us say that a derived Deligne-Mumford stack $\sfX$ is \emph{quasi-geometric} if it is quasi-compact and the diagonal $\Delta_{\sfX}:\sfX \rightarrow \sfX\times \sfX$ is quasi-affine. In this case, the underlying spectral Deligne-Mumford stack $\sfX^\circ$ is quasi-geometric (see \cite[9.1.4.1]{lurie2018sag}), so that the functor $h_{\sfX^\circ}$ that it represents is a (hypercomplete) sheaf with respect to the fpqc topology by virtue of \cite[9.1.4.3]{lurie2018sag}; in particular, it satisfies descent for the fiber smooth topology. 
	
	Let $L_{\fsm}: \Fun(\CAlg^{\cn}_R, \widehat{\SSet}) \rightarrow \widehat{\Shv}_{\fsm}(\CAlg^{\cn}_R)$ denote a left adjoint to the inclusion functor (see \ref{the infty-category of fiber smooth sheaves over R}). Arguing as in the proof of \ref{from DAG to SAG via functors} (using $L_{\fsm}h_{\sfX_{U_\alpha}^\circ}$ and $L_{\fsm}h_{\sfX_U^\circ}$ in place of $h_{\sfX_{U_\alpha}^\circ}$ and $h_{\sfX_U^\circ}$, respectively), we deduce that the composite functor
$$
L_{\fsm}\circ {\Theta_R}_!:\Fun(\CAlg^\Delta_R, \widehat{\SSet}) \rightarrow \Fun(\CAlg^{\cn}_R, \widehat{\SSet}) \rightarrow \widehat{\Shv}_{\fsm}(\CAlg^{\cn}_R)
$$
carries a quasi-geometric derived Deligne-Mumford stack $\sfX$ over $R$ to its underlying quasi-geometric spectral Deligne-Mumford stack $\sfX^\circ$ over $R$.
\end{remark}

\begin{pg}
	According to \cite[3.4.7]{MR2717174}, a morphism $A\rightarrow B$ in $\CAlg^\Delta$ is \emph{smooth} if the relative algebraic cotangent complex $L^{\mathrm{alg}}_{B/A}$ (see \cite[3.2.14]{MR2717174} and \cite[25.3.2.1]{lurie2018sag}) is a dual of connective and perfect object of $\Mod_B$ and almost of finite presentation (see \cite[3.1.5]{MR2717174}). The following observation shows a close relationship between smooth maps in the derived setting and fiber smooth maps in the spectral setting:
\end{pg}	

\begin{lemma}\label{smoothness in DAG and SAG}
	Let $f:A\rightarrow B$ be a morphism of simplicial commutative rings. Then $f$ is smooth if and only if the underlying morphism of $\bbE_\infty$-rings $f^\circ:A^\circ \rightarrow B^\circ$ is fiber smooth. 
\end{lemma}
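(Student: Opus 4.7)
The plan is to reduce both smoothness conditions to the common criterion that $f$ is flat and the induced map of ordinary commutative rings $\pi_0(f)\colon \pi_0(A) \to \pi_0(B)$ is smooth in the classical sense. On the spectral side, this reduction is essentially furnished by \ref{characterization of fiber smooth morphism}: applying it to the morphism of affine spectral algebraic spaces $\Spec B^\circ \to \Spec A^\circ$ shows that $f^\circ$ is fiber smooth if and only if $f^\circ$ is flat (in the sense of \cite[7.2.2.10]{lurie2017ha}) and the $0$-truncation $\tau_{\leq 0}(\Spec f^\circ) = \Spec \pi_0(f^\circ)$ is a smooth morphism of affine schemes.

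On the simplicial commutative ring side, I would invoke the standard characterization of derived algebraic geometry (see, e.g., \cite[25.3.2]{lurie2018sag} together with \cite[3.4.7]{MR2717174}) that a morphism $f\colon A \to B$ in $\CAlg^{\Delta}$ is smooth if and only if $f$ is flat and $\pi_0(f)$ is smooth as a map of ordinary commutative rings. The nontrivial direction proceeds via the cotangent complex: if $f$ is flat and $\pi_0(f)$ is smooth, then the flat base-change identification $L^{\mathrm{alg}}_{B/A} \simeq L^{\mathrm{alg}}_{\pi_0(B)/\pi_0(A)} \otimes_{\pi_0(B)} B$, combined with the fact that $\Omega^{1}_{\pi_0(B)/\pi_0(A)}$ is a finitely generated projective $\pi_0(B)$-module, shows that $L^{\mathrm{alg}}_{B/A}$ is a finitely generated projective $B$-module, hence the dual of a connective perfect object and almost of finite presentation.

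It then suffices to observe that the forgetful functor $\Theta_R$ preserves and detects both $\pi_0$ and flatness. The identification $\pi_0(A) = \pi_0(A^\circ)$ is part of the construction of $\Theta_R$, while flatness in each setting is characterized by flatness of $\pi_0(f)$ together with the requirement that the canonical maps $\pi_n(A) \otimes_{\pi_0(A)} \pi_0(B) \to \pi_n(B)$ are isomorphisms for every $n \geq 0$ (see \cite[25.2.1.2]{lurie2018sag} for the simplicial case and \cite[7.2.2.13]{lurie2017ha} for the $\bbE_\infty$-case). These conditions are manifestly invariants of the underlying $\bbE_\infty$-structure, so $f$ is flat if and only if $f^\circ$ is flat, and the chain of equivalences closes up.

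The main obstacle I anticipate is verifying the ``flat plus classically smooth on $\pi_0$'' characterization in the simplicial setting, which requires unwinding Toën's cotangent-complex definition and identifying $L^{\mathrm{alg}}_{B/A}$ with the flat base change of $\Omega^{1}_{\pi_0(B)/\pi_0(A)}$. Once that description is in hand, the equivalence of the two smoothness notions becomes a formal consequence of \ref{characterization of fiber smooth morphism} together with the functorial properties of $\Theta_R$ just recorded.
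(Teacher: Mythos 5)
Your overall strategy matches the paper's exactly: reduce both notions of smoothness to the common criterion ``flat with smooth $\pi_0$'' and then quote \ref{characterization of fiber smooth morphism} for the spectral side, noting that the forgetful functor identifies $\pi_0$ and preserves and reflects flatness (both being computed by the same condition on homotopy groups). The paper handles the simplicial side by directly citing \cite[3.4.9]{MR2717174}, which already asserts that a morphism of simplicial commutative rings is smooth if and only if it is flat and induces a smooth map on $\pi_0$; this is exactly the statement you identified as the ``main obstacle,'' and the cleanest route is to cite it rather than re-derive it. Your attempted re-derivation has a directional error: flat base change along the pushout square with vertices $A, B, \pi_0(A), \pi_0(B)$ gives $L^{\mathrm{alg}}_{B/A}\otimes_B\pi_0(B)\simeq L^{\mathrm{alg}}_{\pi_0(B)/\pi_0(A)}$, not the identification $L^{\mathrm{alg}}_{B/A}\simeq L^{\mathrm{alg}}_{\pi_0(B)/\pi_0(A)}\otimes_{\pi_0(B)}B$ you wrote (which is not even well-typed, since there is no canonical ring map $\pi_0(B)\to B$). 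Consequently, passing from projectivity of $\Omega^1_{\pi_0(B)/\pi_0(A)}$ back to the required properties of $L^{\mathrm{alg}}_{B/A}$ needs an additional Nakayama/almost-perfect argument, which is precisely the content that \cite[3.4.9]{MR2717174} encapsulates. So: same approach, one extra step that should just be cited.
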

\begin{proof}
	By virtue of \cite[3.4.9]{MR2717174}, $f$ is smooth if and only if it is flat and $\pi_0(f)$ is a smooth map of commutative rings, so the desired result follows from \ref{characterization of fiber smooth morphism}.
\end{proof}	

\begin{remark}\label{big smooth topology for simplicial commutative rings}
	There is an analogue of the fiber smooth topology of \ref{fiber smooth topology} for the $\infty$-category $(\CAlg^\Delta_R)^{\op}$ by replacing ``fiber smooth" with ``smooth" in \ref{fiber smooth topology}. We refer to this Grothendieck topology as the \emph{smooth topology on $(\CAlg^\Delta_R)^{\op}$}. Let $\widehat{\Shv}_{\sm}(\CAlg^\Delta_R) \subseteq \Fun(\CAlg^{\Delta}_R, \widehat{\SSet})$ denote the full subcategory spanned by the smooth sheaves.
\end{remark}

\begin{pg}	
	Using an evident analogue of \ref{properties of representable morphisms}, we introduce a variant of \ref{quasi-geometric spectral algebraic stack} in the derived setting:
\end{pg}

\begin{definition}\label{quasi-geometric derived algebraic stack}
	Let $R$ be a commutative ring. A \emph{quasi-geometric derived algebraic stack over $R$} is a functor $X:\CAlg^\Delta_R \rightarrow \widehat{\SSet}$ which satisfies the following conditions:
\begin{enumerate}[(i)]
\item\label{fpqc sheaf in DAG} The functor $X$ is a sheaf for the fpqc topology of \ref{big etale topology for simplicial commutative rings}.
\item\label{quasi-affine diagonal in DAG} The diagonal morphism $\Delta: X \rightarrow X \times X$ is representable and quasi-affine.
\item\label{smooth cover in DAG} There exists a simplicial commutative algebra $A$ over $R$ and a morphism $\Spec A \rightarrow X$ which is smooth and surjective. 
\end{enumerate}
\end{definition}

\begin{remark}
	Arguing as in the proof of \cite[4.47]{MR3190610}, we see that a smooth morphism of derived Deligne-Mumford stacks \'etale-locally admits a section. Using this observation, a minor modification of the proof of \cite[9.1.4.3]{lurie2018sag} guarantees that an \'etale sheaf satisfying conditions (\ref{quasi-affine diagonal in DAG}) and (\ref{smooth cover in DAG}) of \ref{quasi-geometric derived algebraic stack} is a sheaf for the fpqc topology. In the situation of \ref{quasi-geometric derived algebraic stack}, one can therefore replace (\ref{fpqc sheaf in DAG}) by the weaker condition that $X$ is a sheaf for the \'etale topology. 
\end{remark}

\begin{pg}
	In order to extend \ref{from DAG to SAG via functors} from derived Deligne-Mumford stacks to (quasi-geometric) derived algebraic stacks, we need to understand if the functor $L_{\et}\circ {\Theta_R}_!$ carries representable morphisms in $\Fun(\CAlg^\Delta_R, \widehat{\SSet})$ to representable morphisms in $\widehat{\Shv}_{\et}(\CAlg^{\cn}_R)$; however, this is not straightforward at all. We will circumvent this difficulty by using the following lemmas:
\end{pg}

\begin{lemma}\label{a smooth surjection in DAG admits a section etale locally}
	Let $f: X\rightarrow Y$ be a representable morphism in the $\infty$-category of \'etale sheaves on $(\CAlg^\Delta_R)^{\op}$. If $f$ is a smooth surjection, then $f$ is an effective epimorphism and $(L_{\fsm} \circ {\Theta_R}_!)(f)$ is an effective epimorphism of fiber smooth sheaves on $(\CAlg^{\cn}_R)^{\op}$.
\end{lemma}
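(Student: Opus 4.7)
The assertion has two components: that $f$ is an effective epimorphism of \'etale sheaves on $(\CAlg_R^\Delta)^{\op}$, and that $(L_{\fsm}\circ{\Theta_R}_!)(f)$ is an effective epimorphism of fiber smooth sheaves on $(\CAlg_R^{\cn})^{\op}$. My plan is first to prove the \'etale statement directly using the \'etale-local section principle for smooth surjections of derived Deligne-Mumford stacks recorded in the remark after \ref{quasi-geometric derived algebraic stack}, and then to bootstrap to the fiber smooth statement by unwinding the pointwise description of the left Kan extension ${\Theta_R}_!$ together with the fiber smooth sheafification.

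For the first claim, I would fix a simplicial commutative $R$-algebra $A$ and a morphism $y:\Spec A\to Y$, and form the pullback $\sfX_A := X\times_Y \Spec A$. By representability of $f$ and the stability of smoothness and surjectivity under base change, $\sfX_A$ is a derived Deligne-Mumford stack, smooth and surjective over $\Spec A$. The remark cited above then produces an \'etale covering $\{\Spec A_i\to \Spec A\}$ through which each composite $\Spec A_i\to \Spec A\xrightarrow{y}Y$ lifts to a morphism into $X$, which verifies that $f$ is an effective epimorphism of \'etale sheaves.

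For the second claim, I would verify the effective epi condition locally: every $\Spec B$-point of $L_{\fsm}{\Theta_R}_!(Y)$ must, after a fiber smooth cover of $\Spec B$, lift to $L_{\fsm}{\Theta_R}_!(X)$. Since $\Spec B$ is already a fiber smooth sheaf, such a point is an element of $(L_{\fsm}{\Theta_R}_!(Y))(B)$. By the construction of the sheafification combined with the pointwise colimit formula for ${\Theta_R}_!$, there is a fiber smooth cover $\Spec B'\to \Spec B$ such that the restricted point is represented by a triple $(A, A^\circ\to B', y\in Y(A))$ with $A$ a simplicial commutative $R$-algebra. Applying the first part to $y$ yields an \'etale cover $\{\Spec A_i\to \Spec A\}$ through which $y$ factors via $X$. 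Since $\Theta_R$ preserves \'etale morphisms (an immediate consequence of \ref{characterization of fiber smooth morphism} and \ref{smoothness in DAG and SAG} applied with \'etale in place of smooth), the induced family $\{\Spec A_i^\circ\to \Spec A^\circ\}$ is an \'etale, and hence fiber smooth, covering through which the composite $\Spec A^\circ \to {\Theta_R}_!(Y)$ factors via ${\Theta_R}_!(X)$. Base changing along $\Spec B'\to \Spec A^\circ$ produces a fiber smooth cover of $\Spec B'$, and composing with $\Spec B'\to \Spec B$ a fiber smooth cover of $\Spec B$, each member of which factors through $L_{\fsm}{\Theta_R}_!(X)$.

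\textbf{Main obstacle.} The most delicate step is the representability assertion: after a fiber smooth cover, a $\Spec B$-point of $L_{\fsm}{\Theta_R}_!(Y)$ admits a representative of the form $(A, A^\circ\to B', y\in Y(A))$. The comma $\infty$-category indexing the colimit ${\Theta_R}_!(Y)(B')=\colim_{(A,\, A^\circ\to B')} Y(A)$ is not generally filtered, so an honest representative may require a further refinement of the fiber smooth cover; but since the target conclusion is itself a local lifting statement, this refinement can be absorbed into the single cover of $\Spec B$ being constructed, and therefore causes no harm. The remaining ingredients---stability of \'etale covers under $\Theta_R$ and under base change of quasi-geometric stacks---are routine.
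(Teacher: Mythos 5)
Your proposal is correct, and for the second assertion it takes a genuinely different route from the paper. The paper observes that $\Theta_R$ is left exact and carries smooth coverings to fiber smooth coverings, so that the restriction functor $\Theta_R^\ast$ restricts to a geometric morphism of $\infty$-topoi $\widehat{\Shv}_{\fsm}(\CAlg^{\cn}_R)\to\widehat{\Shv}_{\sm}(\CAlg^\Delta_R)$ whose left adjoint is exactly the inclusion composed with $L_{\fsm}\circ{\Theta_R}_!$. It then invokes \cite[6.2.3.6]{MR2522659} (left adjoints of geometric morphisms preserve effective epimorphisms), reducing everything to showing $f$ is an effective epimorphism of étale sheaves, which is established from the infinitesimal lifting criterion of \cite[3.4.4]{MR2717174} and the argument of \cite[4.47]{MR3190610}---the same input you use. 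Your proof instead unwinds the left Kan extension and sheafification by hand, directly producing the fiber smooth cover that witnesses the lifting property.

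Two remarks on the details of your explicit version. First, the "main obstacle" you flag is not actually an obstacle: $\pi_0$ commutes with colimits of spaces, so a $\pi_0$-element of the colimit defining $({\Theta_R}_!Y)(B')$ is always the image of a $\pi_0$-element of some single term $Y(A)$, regardless of whether the indexing comma $\infty$-category is filtered; no refinement of the cover is needed at that step. Second, the step you pass over more quickly---that a $\pi_0$-section of the sheaf $L_{\fsm}{\Theta_R}_!(Y)$ over $\Spec B$ becomes, after some fiber smooth cover, the image of a $\pi_0$-section of the presheaf ${\Theta_R}_!(Y)$---is true but is the place where a little care is required (it amounts to the fact that the unit of sheafification is $\pi_0$-locally surjective). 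The paper's abstract argument sidesteps both points with a single appeal to the preservation of effective epimorphisms under the left adjoint of a geometric morphism; your version buys concreteness at the cost of needing to manage this bookkeeping explicitly. Both are valid.
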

\begin{proof}
	The forgetful functor $\Theta_R:\CAlg^\Delta_R \rightarrow \CAlg^{\cn}_R$ is left exact (see \cite[25.1.2.2]{lurie2018sag}) and carries smooth coverings to fiber smooth coverings (see \ref{smoothness in DAG and SAG}). In particular, the restriction functor $\Theta_R^\ast$ restricts to a morphism of $\infty$-topoi $\widehat{\Shv}_{\fsm}(\CAlg^{\cn}_R) \rightarrow \widehat{\Shv}_{\sm}(\CAlg^\Delta_R)$ (see \ref{big smooth topology for simplicial commutative rings}), whose left adjoint is given by the composition of the inclusion $\widehat{\Shv}_{\sm}(\CAlg^\Delta_R)  \subseteq \Fun(\CAlg^{\Delta}_R, \widehat{\SSet})$ with $L_{\fsm} \circ {\Theta_R}_!$. Since a left adjoint of a geometric morphism of $\infty$-topoi preserves effective epimorphisms \cite[6.2.3.6]{MR2522659}, it will suffice to show that $f$ is an effective epimorphism in $\widehat{\Shv}_{\sm}(\CAlg^\Delta_R)$. By virtue of \cite[3.4.4]{MR2717174}, a smooth surjective morphism $f:\sfX\rightarrow \sfY$ of derived Deligne-Mumford stacks satisfies an infinitesimal lifting criterion. Using the argument of \cite[4.47]{MR3190610}, we deduce that $f$ is an effective epimorphism of \'etale sheaves, which implies the desired result. 
\end{proof}

\begin{pg}
	We are now ready to prove the main result of this section:
\end{pg}

\begin{proposition}\label{quasi-geometric spectral algebraic stacks from DAG}
	Let $R$ be a commutative ring. Let $X$ be a quasi-geometric derived algebraic stack over $R$. Then the functor $(L_{\fsm}\circ {\Theta_R}_!)(X): \CAlg^{\cn}_R\rightarrow \widehat{\SSet}$ is a quasi-geometric spectral algebraic stack over $R$. 
\end{proposition}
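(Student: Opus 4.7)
The plan is to realize $Y := (L_{\fsm} \circ {\Theta_R}_!)(X)$ as a geometric realization in $\widehat{\Shv}_{\fsm}(\CAlg^{\cn}_R)$ of the componentwise underlying spectral schemes of the \Cech nerve of a smooth atlas of $X$, and then verify the three conditions of Definition \ref{quasi-geometric spectral algebraic stack} in turn. First I would choose a smooth surjection $u: U_0 := \Spec A_0 \to X$ supplied by Definition \ref{quasi-geometric derived algebraic stack}(iii) and form its \Cech nerve $U_\bullet$; since $\Delta_X$ is quasi-affine, each $U_n$ is a quasi-affine derived scheme (hence a quasi-geometric derived Deligne--Mumford stack), and Remark \ref{from DAG to SAG via functors in the case of quasi-geometric stacks} identifies $(L_{\fsm} \circ {\Theta_R}_!)(U_n)$ with the underlying quasi-affine spectral scheme $U_n^\circ$. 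The key input, extracted from the proof of Lemma \ref{a smooth surjection in DAG admits a section etale locally}, is that $L_{\fsm} \circ {\Theta_R}_!$ (followed by the inclusion back into functor categories) is the left adjoint part of a geometric morphism of $\infty$-topoi, and therefore preserves both finite limits and small colimits; it thus carries $U_\bullet$ to a groupoid object $U_\bullet^\circ$ in $\widehat{\Shv}_{\fsm}(\CAlg^{\cn}_R)$ with colimit $Y$, and since $U_0^\circ \to Y$ is an effective epimorphism by Lemma \ref{a smooth surjection in DAG admits a section etale locally}, $U_\bullet^\circ$ is in fact its \Cech nerve.

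Since $Y$ is a fiber smooth sheaf by construction, Remark \ref{fiber smooth sheaf condition for quasi-geometric spectral algebraic stacks} reduces me to verifying conditions (ii) and (iii) of Definition \ref{quasi-geometric spectral algebraic stack}. For the diagonal, the groupoid description yields a pullback square
$$
\Pull{U_1^\circ}{U_0^\circ \times U_0^\circ}{Y}{Y \times Y}{}{}{}{}
$$
whose top arrow is the underlying spectral morphism of the quasi-affine base change $U_1 = (U_0 \times U_0) \times_{X \times X} X \to U_0 \times U_0$ of $\Delta_X$, hence quasi-affine. For any test $\Spec B \to Y \times Y$, the effective-epimorphism property of the right vertical arrow supplies a fiber smooth (so fpqc) cover $\Spec B' \to \Spec B$ through which $\Spec B' \to Y \times Y$ lifts to $U_0^\circ \times U_0^\circ$; the base change of $\Spec B \times_{Y \times Y} Y$ to $\Spec B'$ is then $\Spec B' \times_{U_0^\circ \times U_0^\circ} U_1^\circ$, a quasi-affine spectral scheme, and fpqc descent for quasi-affine morphisms of spectral Deligne--Mumford stacks forces $\Delta_Y$ itself to be representable and quasi-affine.

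With the diagonal under control, the atlas $u^\circ: U_0^\circ \to Y$ is automatically representable (for any $\Spec B \to Y$, the product $U_0^\circ \times_Y \Spec B$ is a pullback of $\Delta_Y$ along $(u^\circ, \mathrm{id}): U_0^\circ \times \Spec B \to Y \times Y$), and it is surjective by Lemma \ref{a smooth surjection in DAG admits a section etale locally}. For fiber smoothness I would use the same refinement trick: the base change of $u^\circ$ to a fiber smooth cover $\Spec B' \to \Spec B$ lifting to $U_0^\circ$ is obtained by pullback from one of the projections $U_1^\circ \to U_0^\circ$, which is fiber smooth by Lemma \ref{smoothness in DAG and SAG} applied to the smooth projection $U_1 \to U_0$, and then fiber smoothness descends via its fpqc locality on the target (immediate from Lemma \ref{characterization of fiber smooth morphism} combined with the fpqc locality of flatness and of ordinary smoothness). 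The main obstacle is the left-exactness claim in the first paragraph---this is precisely the reason one passes from the \'etale to the fiber smooth topology in Remark \ref{from DAG to SAG via functors in the case of quasi-geometric stacks}---but once it is articulated carefully, the remainder is descent bookkeeping and formal manipulation of groupoid objects.
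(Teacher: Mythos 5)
Your proposal is correct and takes essentially the same route as the paper: both extract the left exactness of $L_{\fsm}\circ{\Theta_R}_!$ from the geometric-morphism observation in the proof of Lemma~\ref{a smooth surjection in DAG admits a section etale locally}, apply it to a smooth atlas of $X$ to obtain an effective epimorphism $U_0^\circ\to Y$ with quasi-affine \Cech nerve, use Lemma~\ref{smoothness in DAG and SAG} to promote smoothness to fiber smoothness, and close with Remark~\ref{fiber smooth sheaf condition for quasi-geometric spectral algebraic stacks}. The only real difference is presentational: where the paper cites fiber-smooth variants of \cite[9.1.1.2--3]{lurie2018sag} to conclude that $p^\circ$ is representable quasi-affine and that the diagonal of $Y$ is representable quasi-affine (in that order), you inline the corresponding descent arguments by hand---lifting test points along the effective epimorphism, identifying the pullback with a base change of $U_1^\circ\to U_0^\circ\times U_0^\circ$, and invoking fpqc descent for quasi-affine representability and for fiber smoothness---and you verify the diagonal first, then the atlas. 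Your version is self-contained at the cost of length; the paper's is terser but leans on unproved ``variants'' of Lurie's lemmas. Both are valid, and the full \Cech nerve in your setup is not really used beyond degree one, so there is no substantive gain or loss there.
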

\begin{proof}
	Choose a smooth surjection $p:\Spec A \rightarrow X$ where $A$ is a simplicial commutative $R$-algebra. Let $\sfY$ be a derived Deligne-Mumford stack representing the fiber product $\Spec A \times_X \Spec A$ (note that $\sfY$ is quasi-affine), so that it fits into a pullback square of fpqc sheaves
$$
\Pull{\sfY}{\Spec A}{\Spec A}{X.}{q}{}{p}{p}
$$
Using \ref{from DAG to SAG via functors in the case of quasi-geometric stacks} and the fact that the functor $L_{\fsm}\circ {\Theta_R}_!$ is left exact (see the proof of \ref{a smooth surjection in DAG admits a section etale locally}), the above diagram induces a pullback square of fiber smooth sheaves on $(\CAlg^{\cn}_R)^{\op}$
$$
\Pull{\sfY^\circ}{\Spec A^\circ}{\Spec A^\circ}{L_{\fsm}({\Theta_R}_!(X)).}{q^\circ}{}{p^\circ}{p^\circ}
$$
Since $q:\sfY\rightarrow \Spec A$ is quasi-affine, so is its underlying morphism $q^\circ$ of spectral Deligne-Mumford stacks. By virtue of \ref{a smooth surjection in DAG admits a section etale locally}, the map $p^\circ: \Spec A^\circ\rightarrow L_{\fsm}({\Theta_R}_!(X))$ is an effective epimorphism, so that a relative version of \cite[9.1.1.3]{lurie2018sag} with the fiber smooth topology in place of the fpqc topology (which can be proven by exactly the same argument) guarantees that $p^\circ$ is representable quasi-affine. Since $q$ is a smooth surjection, it follows from \ref{smoothness in DAG and SAG} that $q^\circ$ is fiber smooth and surjective. Consequently, the representable morphism $p^\circ$ is also a fiber smooth surjection because the property of being a fiber smooth morphism is local on the target with respect to the flat topology (see \cite[11.2.5.9]{lurie2018sag}). Applying a variant of \cite[9.1.1.2]{lurie2018sag} which uses fiber smooth morphisms in place of flat morphisms to $L_{\fsm}({\Theta_R}_!(X))$ and $p^\circ$, we deduce that the diagonal of $L_{\fsm}({\Theta_R}_!(X))$ is representable quasi-affine. Invoking \ref{fiber smooth sheaf condition for quasi-geometric spectral algebraic stacks}, we conclude that $L_{\fsm}({\Theta_R}_!(X))$ is a quasi-geometric spectral algebraic stack over $R$. 
\end{proof}

\begin{remark}\label{underlying quasi-geometric spectral algebraic stacks}
	Let $X$ be a quasi-geometric derived algebraic stack $X$ over $R$. Let $X^\circ$ denote its image under the functor $L_{\fsm}\circ {\Theta_R}_!$; we refer to $X^\circ$ as the \emph{underlying quasi-geometric spectral algebraic stack over $R$} of $X$.
\end{remark}

\section{Points of Quasi-geometric Spectral Algebraic Stacks}\label{Sec: Points of Quasi-geometric Spectral Algebraic Stacks}
	In this section, we define the notion of points of quasi-geometric spectral algebraic stacks in such a way that the points of ordinary algebraic stacks are defined (see \cite[5.2]{MR1771927}) and establish some of their basic properties. 

\begin{definition}\label{the induced topological space of quasi-geometric stacks}
	Let $X:\CAlg^{\cn}\rightarrow \widehat{\SSet}$ be a functor satisfying the following condition:
\begin{itemize}
\item[$(\ast)$] There exist a quasi-separated spectral algebraic space $\sfX_0$ and a relative spectral algebraic space $\pi:\sfX_0 \rightarrow X$ in $\Fun(\CAlg^{\cn},\widehat{\SSet})$ which is quasi-separated, faithfully flat, and locally almost of finite presentation.
\end{itemize}
A \emph{point} of $X$ is a morphism $\Spec \kappa \rightarrow X$, where $\kappa$ is a field. We define an equivalence relation on the (not necessarily small) set of points of $X$ as follows: given two points $p:\Spec \kappa \rightarrow X$ and $p':\Spec \kappa' \rightarrow X$, we will write $p\sim p'$ if there exists a field $\kappa''$ and a commutative diagram
$$
\Pull{\Spec \kappa''}{\Spec \kappa'}{\Spec \kappa}{X}{}{}{p'}{p}
$$
in $\Fun(\CAlg^{\cn},\widehat{\SSet})$; let $|X|$ denote the set of equivalence classes. We endow $|X|$ with the topology generated by the sets $|U|$, where $U$ ranges over all representable open $j:U\rightarrow X$ in $\Fun(\CAlg^{\cn},\widehat{\SSet})$ (here we identify $|U|$ with its image under the natural map of sets $|j|:|U|\rightarrow |X|$ which is injective). 
\end{definition}

\begin{remark}\label{compatible underlying topological spaces}
	In the special case where the functor $X$ is representable by a quasi-separated spectral algebraic space, it follows from \cite[3.6.3.1]{lurie2018sag} that the topological space $|X|$ defined above is homeomorphic to the topological space associated to $X$ in the sense of \cite[3.6.1.1]{lurie2018sag}.
\end{remark}

\begin{pg}
	For later reference, we record some observations whose proofs are immediate:
\end{pg}

\begin{lemma}\label{basic properties of the underlying topological space}
	Let $f:X\rightarrow Y$ be a morphism in $\Fun(\CAlg^{\cn},\widehat{\SSet})$, where $X$ and $Y$ satisfy condition $(\ast)$ of \emph{\ref{the induced topological space of quasi-geometric stacks}}. Then:
\begin{enumerate}[$(i)$]
\item The induced map of sets $|f|:|X|\rightarrow |Y|$ is continuous. 
\item If $f$ is representable, then it is surjective if and only if the induced map $|f|$ is surjective. 
\item Suppose we are given a pullback diagram
$$
\Pull{X'}{Y'}{X}{Y}{}{}{}{}
$$
of functors satisfying condition $(\ast)$ of \emph{\ref{the induced topological space of quasi-geometric stacks}}. Then the induced map $|X'|\rightarrow |X|\times_{|Y|}|Y'|$ is a surjection of topological spaces.
\end{enumerate}
\end{lemma}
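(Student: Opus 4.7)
My approach is to establish each part by exploiting the basis description of the topology together with stability of the relevant properties (open immersions, pullbacks) under base change, and to reduce (ii) to a known fact about representable morphisms of spectral Deligne-Mumford stacks.

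For part (i), I would show that the preimage of each generator $|V|\subseteq |Y|$ (with $j:V\rightarrow Y$ representable open) is open in $|X|$. Form the pullback $V\times_{Y}X$; since representable open immersions are stable under base change, the projection $V\times_Y X\rightarrow X$ is a representable open immersion. I claim $|f|^{-1}(|V|)=|V\times_Y X|$ as subsets of $|X|$. This is an unwinding of definitions: a point $p:\Spec\kappa\rightarrow X$ has $f\circ p$ equivalent to a point factoring through $V$ if and only if there exists a field $\kappa''$ through which $p$ itself factors through $V\times_Y X$, using the universal property of the pullback and the injectivity of $|V|\hookrightarrow|Y|$ noted in \ref{the induced topological space of quasi-geometric stacks}. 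Consequently the preimage is open, and continuity follows.

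For part (ii), both directions rest on the comparison between surjectivity of a morphism of spectral Deligne-Mumford stacks and surjectivity of the underlying map of topological spaces (as in \cite[3.6.3.1]{lurie2018sag}, cf.\ \ref{compatible underlying topological spaces}). Assume first that $f$ is a representable surjection. Given $q:\Spec\kappa\rightarrow Y$, the pullback $X\times_Y\Spec\kappa\rightarrow\Spec\kappa$ is a surjective morphism of spectral Deligne-Mumford stacks, so admits a point $\Spec\kappa'\rightarrow X\times_Y\Spec\kappa$; projecting to $X$ yields a preimage of $[q]$ under $|f|$. Conversely, suppose $|f|$ is surjective and let $\Spec A\rightarrow Y$ be arbitrary; to check that $X\times_Y\Spec A\rightarrow\Spec A$ is surjective, pick $[\Spec\kappa\rightarrow\Spec A]\in|\Spec A|$, compose with $\Spec A\rightarrow Y$, lift via surjectivity of $|f|$ to a point $p':\Spec\kappa'\rightarrow X$, and extract a common field $\kappa''$ fitting into the equivalence diagram from \ref{the induced topological space of quasi-geometric stacks}. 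The two maps $\Spec\kappa''\rightarrow X$ and $\Spec\kappa''\rightarrow\Spec\kappa\rightarrow\Spec A$ now agree over $Y$, inducing $\Spec\kappa''\rightarrow X\times_Y\Spec A$ whose image recovers $[\Spec\kappa\rightarrow\Spec A]$ in $|\Spec A|$.

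For part (iii), continuity of the canonical map $|X'|\rightarrow|X|\times_{|Y|}|Y'|$ follows from part (i) applied to the two projections, so only set-theoretic surjectivity must be checked. Given $([p],[p'])$ with $p:\Spec\kappa\rightarrow X$ and $p':\Spec\kappa'\rightarrow Y'$ whose images in $|Y|$ coincide, the defining equivalence relation provides a field $\kappa''$ and maps $\Spec\kappa''\rightarrow\Spec\kappa$, $\Spec\kappa''\rightarrow\Spec\kappa'$ whose composites into $Y$ agree. The universal property of the pullback $X'=X\times_Y Y'$ then produces $\Spec\kappa''\rightarrow X'$, whose class maps to $([p],[p'])$.

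The main obstacle is the forward direction of part (ii): it depends on interpreting ``surjective'' for representable morphisms in terms of the underlying topological spaces of spectral Deligne-Mumford stacks and on the existence of sufficiently many residue-field-valued points after base change, both of which require appealing to the cited results from \cite{lurie2018sag}. The other assertions are essentially unwindings of the definition once the basis of the topology and the equivalence relation on points are used carefully.
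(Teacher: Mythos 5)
Your proof is correct, and since the paper explicitly records this lemma as one ``whose proofs are immediate'' (offering no argument of its own), your write-up simply supplies the routine verifications the author left to the reader. The three parts are handled by exactly the expected moves: part (i) by pulling back a basic open $V \to Y$ to $V\times_Y X \to X$ and checking $|f|^{-1}(|V|)=|V\times_Y X|$ directly from the definition of the equivalence relation and the universal property of the pullback; part (ii) by unwinding what ``surjective representable morphism'' means in terms of the underlying topological space of a spectral Deligne--Mumford stack (so that, after base change to $\Spec\kappa$ or $\Spec A$, one either extracts a residue-field-valued point or constructs one via the field $\kappa''$ witnessing equivalence of points of $Y$); and part (iii) by producing a point of the pullback via the universal property from the field $\kappa''$ provided by the equivalence relation in $Y$. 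No gaps, and no meaningful divergence from what the paper implicitly intends.
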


\begin{pg}
	The rest of this section is devoted to investigating some properties of the ``underlying topological spaces" of \ref{the induced topological space of quasi-geometric stacks}. 
\end{pg}

\begin{lemma}\label{quotient topology}
	Let $\pi:\sfX_0\rightarrow X$ be a morphism of functors appearing in condition $(\ast)$ of \emph{\ref{the induced topological space of quasi-geometric stacks}}. Then the induced map of topological spaces $|\pi|:|\sfX_0|\rightarrow |X|$ is open. In particular, $|\pi|$ is a quotient map.
\end{lemma}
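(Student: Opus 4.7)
The plan is to establish that $|\pi|$ is continuous, surjective, and open; from this the quotient-map conclusion follows by standard point-set topology. Continuity is granted by \ref{basic properties of the underlying topological space}(i). Surjectivity follows from \ref{basic properties of the underlying topological space}(ii) upon noting that $\pi$ is representable (a relative spectral algebraic space is representable) and that a faithfully flat morphism is an effective epimorphism of fpqc sheaves, hence surjective in the sense required. The content of the lemma is therefore the openness of $|\pi|$.

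To prove openness, I would fix an open $W\subseteq |\sfX_0|$ and exhibit a representable open subfunctor $U\subseteq X$ with $|U|=|\pi|(W)$. Since $\sfX_0$ is a quasi-separated spectral algebraic space, $W=|V|$ for a uniquely determined open subspace $V\subseteq \sfX_0$, and the composite $V\hookrightarrow \sfX_0\xrightarrow{\pi} X$ remains representable, flat, and locally almost of finite presentation (open immersions are \'etale, hence flat and locally almost of finite presentation, and these properties are closed under composition). For each morphism $\eta:\Spec R\to X$ one forms the pullback $V_\eta := V\times_X \Spec R$; by condition $(\ast)$ this is a spectral algebraic space, and its projection $V_\eta\to \Spec R$ inherits flatness and local almost finite presentation. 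I would let $U_\eta\subseteq \Spec R$ be the open subspace whose underlying topological space is the image of $|V_\eta|\to |\Spec R|$, and declare $U\subseteq X$ to be the subfunctor with $U\times_X \Spec R = U_\eta$ for each $\eta$.

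The central technical input is that the image of a flat, locally almost of finite presentation morphism of spectral algebraic spaces really is open in the target, so that $U_\eta$ is well-defined. I expect to reduce this to the classical result that a flat, locally of finite presentation morphism of ordinary algebraic spaces is open by passing to $0$-truncations: the underlying topological space of a morphism of spectral algebraic spaces agrees with that of its $0$-truncation by \cite[3.6.3.1]{lurie2018sag}, and $0$-truncation sends flat (resp. locally almost of finite presentation) morphisms to flat (resp. locally of finite presentation) morphisms of ordinary algebraic spaces. Granting this, the family $\{U_\eta\}$ is base-change compatible---the image of $V_\eta\to \Spec R$ in $|\Spec R|$ pulls back to the image of $V_{\eta'}\to \Spec R'$ in $|\Spec R'|$ along any $\Spec R' \to \Spec R$ over $X$, because residue field extensions are faithfully flat and thus tensor products of residue fields remain nonzero---so $U$ is a well-defined representable open subfunctor of $X$. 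Finally, $|U| = |\pi|(W)$: a point $[p]\in |X|$ represented by $\Spec\kappa\to X$ lies in $|U|$ precisely when $V\times_X \Spec\kappa\ne\emptyset$ (since $|\Spec\kappa|$ is a single point), and by \ref{basic properties of the underlying topological space}(iii) this is equivalent to $[p]$ lying in the image of $|V|\to |X|$. The main obstacle is the ``flat plus locally almost of finite presentation implies open'' input; the rest is bookkeeping around it.
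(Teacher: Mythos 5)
Your proof is correct and follows essentially the same route as the paper's. Both arguments reduce openness of $|\pi|$ to the key input that a flat, locally almost of finite presentation map of spectral algebraic spaces induces an open map on underlying topological spaces (the paper cites a refinement of \cite[4.3.4.3]{lurie2018sag} without the quasi-compactness hypothesis, whereas you sketch the same fact by passing to $0$-truncations and invoking the classical result; both are fine), and both then exhibit $|\pi|(W)$ as the image of a representable open subfunctor of $X$ by pulling back to affines. The only cosmetic difference is that the paper defines the candidate subfunctor $X_U$ intrinsically (as those $\eta$ for which $|\eta|$ factors through $|\pi|(U)$) and then checks that each fiber $\Spec A\times_X X_U$ is open via \cite[19.2.4.1]{lurie2018sag}, while you build the subfunctor out of the pullbacks $U_\eta$ and verify base-change compatibility directly; the two constructions produce the same subfunctor, and your compatibility argument (nonemptiness of fibers is preserved under faithfully flat extension of residue fields) is the standard one. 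One small thing worth tightening: your phrase ``declare $U\subseteq X$ to be the subfunctor with $U\times_X\Spec R=U_\eta$'' would be cleanest if, as in the paper, you defined $U$ intrinsically and then identified its pullbacks with the $U_\eta$, thereby outsourcing the gluing to \cite[19.2.4.1]{lurie2018sag} rather than asserting it.
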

\begin{proof}
	Let $U\subseteq |\sfX_0|$ be an open subset; we wish to show that its image under $|\pi|$ is open. Let $X_U$ be the subfunctor of $X$ which carries each object $A\in \CAlg^{\cn}$ to the summand of $X(A)$ spanned by those $\eta\in X(A)$ for which the induced map $|\eta|:|\Spec A|\rightarrow |X|$ factors through $|\pi|(U)$. We claim that the inclusion $j:X_U\rightarrow X$ is representable open. For this, let $\eta:\Spec A \rightarrow X$ be a point and consider a pullback diagram of functors
$$
\Pull{\Spec A \times_X\sfX_0}{\sfX_0}{\Spec A}{X.}{\eta'}{\pi'}{\pi}{\eta}
$$
Using \ref{basic properties of the underlying topological space}, we can identify $|\eta|^{-1}(|\pi|(U))$ with $|\pi'|(|\eta'|^{-1}U)$; in particular, it is an open subset of $|\Spec A|$ because $|\pi'|$ is an open map by virtue of a refinement of \cite[4.3.4.3]{lurie2018sag} without the quasi-compact assumption 
(which can be proven with little additional effort). 
Then \cite[19.2.4.1]{lurie2018sag} guarantees that $j$ is representable open. By construction, we have that $|j|(|X_U|)=|\pi|(U)$, thereby completing the proof.
\end{proof}

\begin{lemma}\label{flat and locally almost of finite presentation from affines}
	Let $X:\CAlg^{\cn}\rightarrow \widehat{\SSet}$ be a functor satisfying condition $(\ast)$ of \emph{\ref{the induced topological space of quasi-geometric stacks}} and let $\eta: \Spec A \rightarrow X$ be a representable morphism which is flat and locally almost of finite presentation. Then $|\eta|(|\Spec A|)\subseteq |X|$ is open.
\end{lemma}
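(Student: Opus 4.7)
The plan is to reduce openness on $|X|$ to openness on the algebraic space $|\sfX_0|$ via the quotient map $|\pi|:|\sfX_0|\rightarrow |X|$ established in \ref{quotient topology}, and then to invoke the known openness of flat morphisms locally almost of finite presentation between spectral algebraic spaces.

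First I would fix a morphism $\pi:\sfX_0\rightarrow X$ as in condition $(\ast)$ of \ref{the induced topological space of quasi-geometric stacks}. By \ref{quotient topology}, the induced map $|\pi|$ is a quotient map, so a subset of $|X|$ is open if and only if its preimage in $|\sfX_0|$ is open; in particular, it suffices to show that the preimage $|\pi|^{-1}(|\eta|(|\Spec A|))$ is open in $|\sfX_0|$.

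Next I would form the pullback square
$$
\Pull{\sfW}{\sfX_0}{\Spec A}{X}{p}{q}{\pi}{\eta}
$$
Since $\pi$ is a relative spectral algebraic space, $\sfW$ is itself a spectral algebraic space, and the projection $p:\sfW\rightarrow \sfX_0$ is a base change of $\eta$, hence flat and locally almost of finite presentation. Invoking the refinement of \cite[4.3.4.3]{lurie2018sag} without the quasi-compact hypothesis (as already used in the proof of \ref{quotient topology}), the map $|p|:|\sfW|\rightarrow |\sfX_0|$ is open; in particular, its image $|p|(|\sfW|)$ is an open subset of $|\sfX_0|$.

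It then remains to identify this image with the desired preimage. Applying \ref{basic properties of the underlying topological space}(iii) to the pullback square yields a surjection of topological spaces $|\sfW|\twoheadrightarrow |\Spec A|\times_{|X|}|\sfX_0|$, which shows that $|p|(|\sfW|)$ coincides with the image of the second projection $|\Spec A|\times_{|X|}|\sfX_0|\rightarrow |\sfX_0|$, namely $|\pi|^{-1}(|\eta|(|\Spec A|))$. The only subtlety is justifying the openness statement for flat, locally almost of finite presentation morphisms of spectral algebraic spaces without quasi-compactness on $\sfW$; this is precisely the refinement already invoked in \ref{quotient topology} and can be established by the same argument with little additional effort.
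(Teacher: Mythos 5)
Your proof is correct and follows essentially the same route as the paper's: reduce via the quotient map $|\pi|$ from \ref{quotient topology}, identify the preimage with the image of $|\Spec A \times_X \sfX_0| \rightarrow |\sfX_0|$ using \ref{basic properties of the underlying topological space}(iii), and conclude by the unbounded variant of [4.3.4.3]. The paper's proof is just a terser statement of the same argument.
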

\begin{proof}
	By virtue of \ref{quotient topology}, $|\pi|$ is a quotient map. Then the desired result follows by combining \ref{basic properties of the underlying topological space} with the variant of \cite[4.3.4.3]{lurie2018sag} mentioned in the proof of \ref{quotient topology}.	
\end{proof}

\begin{lemma}\label{basis of quasi-compact open subsets}
	Let $X:\CAlg^{\cn}\rightarrow \widehat{\SSet}$ be a functor satisfying condition $(\ast)$ of \emph{\ref{the induced topological space of quasi-geometric stacks}}. Then the underlying topological space $|X|$ has a basis consisting of quasi-compact open subsets of the form $|\eta|(|\Spec A|)$, where $A$ is a connective $\bbE_\infty$-ring and $\eta:\Spec A \rightarrow X$ is a relative spectral algebraic space which is flat and locally almost of finite presentation. 
\end{lemma}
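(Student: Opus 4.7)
The plan is first to verify that each set $|\eta|(|\Spec A|)$ appearing in the statement is indeed a quasi-compact open subset of $|X|$, and then to show that for every open $V \subseteq |X|$ and every $x \in V$ one can produce such an $\eta$ satisfying $x \in |\eta|(|\Spec A|) \subseteq V$.

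Openness is immediate from \ref{flat and locally almost of finite presentation from affines}. For quasi-compactness, I would invoke the continuity of $|\eta|$ (a special case of \ref{basic properties of the underlying topological space}) together with the fact that $|\Spec A|$ is homeomorphic to the Zariski spectrum of the ordinary commutative ring $\pi_0 A$, and is therefore quasi-compact.

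For the basis property, fix $x \in V$. Because $\pi$ is representable and faithfully flat, \ref{basic properties of the underlying topological space} yields a point $x_0 \in |\sfX_0|$ with $|\pi|(x_0) = x$, lying in the open preimage $|\pi|^{-1}(V)$. Since $\sfX_0$ is a spectral algebraic space, I would choose an \'etale map $u:\Spec B \rightarrow \sfX_0$ together with a point $x_0' \in |\Spec B|$ satisfying $|u|(x_0') = x_0$. Using that the basic opens $|\Spec B[1/f]|$ form a basis for the topology on $|\Spec B|$, I would then pick $f \in \pi_0 B$ so that $x_0' \in |\Spec B[1/f]|$ and $|u|(|\Spec B[1/f]|) \subseteq |\pi|^{-1}(V)$. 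Setting $A = B[1/f]$, the composition $\eta:\Spec A \rightarrow \sfX_0 \rightarrow X$ then has image contained in $V$ and containing the chosen point $x$.

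The step requiring the most care is verifying that $\eta$ is a relative spectral algebraic space which is flat and locally almost of finite presentation. Representability follows because for any test $\Spec R \rightarrow X$, one has $\Spec A \times_X \Spec R \simeq \Spec A \times_{\sfX_0}(\sfX_0 \times_X \Spec R)$, where the second factor is a spectral algebraic space (since $\pi$ is a relative spectral algebraic space) and $\Spec A \rightarrow \sfX_0$ is \'etale, so the pullback is an \'etale spectral algebraic space over it. Flatness follows by composing the \'etale map $\Spec A \rightarrow \sfX_0$ with the flat map $\pi$, and the ``locally almost of finite presentation'' condition is likewise stable under composition, applied to the \'etale map $\Spec A \rightarrow \sfX_0$ and $\pi$.
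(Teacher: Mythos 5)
Your proof is correct, and it follows essentially the same route as the paper's: reduce to $|\sfX_0|$ via the open quotient map $|\pi|$ (Lemmas \ref{quotient topology} and \ref{flat and locally almost of finite presentation from affines}) and handle the spectral algebraic space case by affine \'etale charts. Where the paper simply invokes (the proof of) \cite[3.6.3.3]{lurie2018sag} for the quasi-separated spectral algebraic space case, you unwind that step explicitly; the remaining verifications (quasi-compactness of $|\Spec A|$, stability of flatness, local almost finite presentation, and the relative spectral algebraic space condition under composition and base change) are all sound.
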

\begin{proof}
	Using \ref{quotient topology} and \ref{flat and locally almost of finite presentation from affines}, we can reduce to the case where $X$ is a quasi-separated spectral algebraic space, in which case the desired result follows from (the proof of) \cite[3.6.3.3]{lurie2018sag}.
\end{proof}

\begin{pg}
	Combining 	\ref{flat and locally almost of finite presentation from affines} with \ref{basis of quasi-compact open subsets}, we immediately deduce the following generalization of \ref{flat and locally almost of finite presentation from affines}:
\end{pg}	
	
\begin{lemma}\label{flat and locally almost of finite presentation imply open map}
	Let $f: X'\rightarrow X$ be a morphism in $\Fun(\CAlg^{\cn}, \widehat{\SSet})$, where $X'$ and $X$ satisfy condition $(\ast)$ of \emph{\ref{the induced topological space of quasi-geometric stacks}}. If $f$ is representable flat and locally almost of finite presentation, then the induced map of topological spaces $|X'| \rightarrow |X|$ is open.
\end{lemma}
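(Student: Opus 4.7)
The plan is to unwind the claim by covering $|X'|$ with the basis provided by Lemma \ref{basis of quasi-compact open subsets} and then apply Lemma \ref{flat and locally almost of finite presentation from affines} to the individual basis members pushed forward along $f$. Concretely, to show that $|f|:|X'|\to |X|$ is open, it suffices to show that $|f|(U)$ is open for every member $U$ of a basis of the topology on $|X'|$.

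By Lemma \ref{basis of quasi-compact open subsets}, the topology on $|X'|$ admits a basis consisting of subsets of the form $|\eta'|(|\Spec B|)$, where $B$ is a connective $\bbE_\infty$-ring and $\eta':\Spec B \rightarrow X'$ is a relative spectral algebraic space which is flat and locally almost of finite presentation. Given such an $\eta'$, form the composition $f\circ\eta':\Spec B \rightarrow X$. Since both $f$ and $\eta'$ are representable, flat, and locally almost of finite presentation, so is $f\circ\eta'$ (each of these properties is stable under composition, as can be seen \'etale-locally on the target using \cite[6.3.3.3]{lurie2018sag} and the fact that flatness and ``locally almost of finite presentation'' are stable under composition for morphisms of spectral Deligne-Mumford stacks). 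In particular, $f\circ\eta'$ satisfies the hypotheses of Lemma \ref{flat and locally almost of finite presentation from affines} applied to $X$.

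Applying Lemma \ref{flat and locally almost of finite presentation from affines} to $f\circ\eta'$, we conclude that $|f\circ\eta'|(|\Spec B|)\subseteq |X|$ is open. Since the induced map $|f\circ\eta'|$ factors as $|f|\circ|\eta'|$, the set $|f\circ\eta'|(|\Spec B|)$ coincides with $|f|(|\eta'|(|\Spec B|))$. Hence $|f|$ carries each member of the basis to an open subset of $|X|$, which completes the proof.

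The argument is essentially a packaging of the two preceding lemmas, so no step presents a serious obstacle; the only verification worth isolating is that the composition $f\circ\eta'$ inherits representability, flatness, and ``locally almost of finite presentation,'' which reduces via the pullback formulation of \ref{properties of representable morphisms} to the corresponding stability assertions for morphisms of spectral Deligne-Mumford stacks.
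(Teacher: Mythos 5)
Your proof is correct and takes essentially the same approach the paper has in mind: the paper states the lemma as an immediate consequence of combining \ref{basis of quasi-compact open subsets} (to produce a basis of $|X'|$ by flat, locally almost finitely presented affines) with \ref{flat and locally almost of finite presentation from affines} (applied to the composites $f\circ\eta'$), which is exactly your argument, with the stability of representability, flatness, and ``locally almost of finite presentation'' under composition, and the functoriality $|f\circ\eta'| = |f|\circ|\eta'|$, filled in.
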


\begin{pg}
	Let $X$ be a functor which satisfies condition $(\ast)$ of \ref{the induced topological space of quasi-geometric stacks}. Under mild hypotheses, giving an open subset of $|X|$ is equivalent to giving an open immersion $U\rightarrow X$:
\end{pg}

\begin{lemma}\label{quasi-compact open subsets and quasi-geometric stacks}
	Let $X:\CAlg^{\cn}\rightarrow \widehat{\SSet}$ be a functor which satisfies condition $(\ast)$ of \emph{\ref{the induced topological space of quasi-geometric stacks}} and descent for the fpqc topology. Let $\pi:\sfX_0\rightarrow X$ be a morphism as in condition $(\ast)$ of \emph{\ref{the induced topological space of quasi-geometric stacks}}. Assume that the diagonal of $X$ is representable quasi-affine. If $\calU\subseteq |X|$ is a quasi-compact open subset, then there exist a quasi-geometric stack $U$ and a representable open immersion $j:U\rightarrow X$ such that $|j|(|U|)=\calU$.
\end{lemma}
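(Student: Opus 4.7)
The plan is to define $U$ explicitly as the subfunctor $X_\calU \subseteq X$ whose $A$-points are those $\eta \in X(A)$ for which the induced map $|\eta|:|\Spec A| \to |X|$ factors through $\calU$. Writing $j$ for the inclusion, I would first show that $j$ is a representable open immersion: given any $\eta: \Spec A \to X$, the pullback $U \times_X \Spec A$ corresponds to the open subset $|\eta|^{-1}(\calU) \subseteq |\Spec A|$, and hence is represented by the associated open subfunctor of $\Spec A$ via \cite[19.2.4.1]{lurie2018sag}. The identity $|j|(|U|) = \calU$ is then immediate from the construction: a point $p: \Spec \kappa \to X$ has image in $\calU$ if and only if $p$ factors through $U$.

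Next I would verify the three axioms for $U$ to be a quasi-geometric stack. Fpqc descent for $U$ reduces to fpqc descent for $X$, using the observation that a faithfully flat map $A \to B$ of connective $\bbE_\infty$-rings induces a surjection on the underlying topological spaces, so that the property of factoring through $\calU$ is fpqc local. For the diagonal: since $j$ is a monomorphism, $U \times_X U \simeq U$, and the diagonal $\Delta_U: U \to U \times U$ is identified with the base change of $\Delta_X: X \to X \times X$ along $U \times U \to X \times X$; since $\Delta_X$ is representable and quasi-affine by hypothesis, so is $\Delta_U$.

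The last axiom---existence of a faithfully flat cover by an affine---is where the quasi-compactness of $\calU$ enters, and is the main content of the argument. By \ref{basis of quasi-compact open subsets}, the open set $\calU$ is a union of basic opens of the form $|\eta_i|(|\Spec A_i|)$, where each $\eta_i:\Spec A_i \to X$ is a representable relative spectral algebraic space which is flat and locally almost of finite presentation; by quasi-compactness of $\calU$, a finite subcollection $\eta_1,\dots,\eta_n$ suffices. Each $\eta_i$ tautologically factors through $U$, so we obtain a morphism $q:\Spec\bigl(\prod_{i=1}^{n} A_i\bigr) \to U$ from an affine. This $q$ is flat---each $\eta_i$ is flat over $X$, and since $j$ is a monomorphism flatness transfers to $U$---and surjective on topological spaces by the choice of the cover. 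The hard part will be precisely this verification that the flat, topologically surjective cover of $\calU$ over $X$ descends to a faithfully flat affine cover of $U$, but the monomorphism property of $j$ makes both statements formal and completes the proof.
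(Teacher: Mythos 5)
Your proof is correct and follows essentially the same line as the paper's: define $U$ as the subfunctor of $X$ selected by the factorization condition on underlying spaces, check that $j$ is a representable open immersion with $|j|(|U|)=\calU$, verify the fpqc-sheaf and quasi-affine-diagonal conditions formally, and obtain a faithfully flat affine cover of $U$ by combining \ref{basis of quasi-compact open subsets} with the quasi-compactness of $\calU$. The only stylistic difference is that the paper applies \ref{basis of quasi-compact open subsets} to $U$ itself (after noting $|U|\simeq\calU$ via \ref{flat and locally almost of finite presentation imply open map}), while you apply it to $X$ and then push the finitely many flat $\eta_i$ through the monomorphism $j$; both routes use the same ingredients.
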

\begin{proof}
	Let $U$ be the subfunctor of $X$ which carries an object $A\in \CAlg^{\cn}$ to the summand of $X(A)$ spanned by those $\eta\in X(A)$ for which the induced map of topological spaces $|\eta|:|\Spec A|\rightarrow |X|$ factors through $\calU$. It follows immediately that the inclusion $j:U\rightarrow X$ is representable open and that $|j|(|U|)=\calU$ (note that $U$ satisfies condition $(\ast)$ of \ref{the induced topological space of quasi-geometric stacks}). Using \cite[6.3.3.8]{lurie2018sag}, we see that $U$ is a sheaf for the fpqc topology. Since the diagonal of $X$ is quasi-affine, so is the diagonal of $U$. By virtue of \ref{flat and locally almost of finite presentation imply open map}, $|U|$ is homeomorphic to $\calU$, hence quasi-compact. Then \ref{basis of quasi-compact open subsets} guarantees that there exists a relative spectral algebraic space $\Spec A \rightarrow U$ which is faithfully flat (and locally almost of finite presentation), which completes the proof.
\end{proof}

\begin{pg}
	We now extend the relationship between reduced closed substacks of a (quasi-geometric) spectral Deligne-Mumford stack $\sfX$ and open subsets of $|\sfX|$ to quasi-geometric spectral algebraic stacks; see \cite[3.1.6.3]{lurie2018sag}.
\end{pg}

\begin{lemma}\label{the equivalent conditions of being reduced}
	Let $X$ be a quasi-geometric spectral algebraic stack. The following conditions are equivalent:
\begin{enumerate}[$(i)$]
\item For every fiber smooth morphism $f:\Spec A \rightarrow X$, the $\bbE_\infty$-ring $A$ is discrete and reduced.
\item There exists a fiber smooth surjection $\Spec A \rightarrow X$, where the $\bbE_\infty$-ring $A$ is discrete and reduced.
\end{enumerate}
\end{lemma}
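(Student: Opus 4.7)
The implication $(i) \Rightarrow (ii)$ is immediate: condition \ref{smooth cover} of \ref{quasi-geometric spectral algebraic stack} guarantees the existence of a fiber smooth surjection $\Spec A \rightarrow X$, and applying $(i)$ to this morphism yields $(ii)$.

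For the converse $(ii) \Rightarrow (i)$, the plan is as follows. Fix a fiber smooth surjection $p: \Spec A \rightarrow X$ with $A$ discrete and reduced (furnished by $(ii)$), and let $f: \Spec B \rightarrow X$ be an arbitrary fiber smooth morphism. I would form the pullback $\mathsf{Z} = \Spec A \times_X \Spec B$ in $\Fun(\CAlg^{\cn}, \widehat{\SSet})$. Since the diagonal of $X$ is representable quasi-affine, $\mathsf{Z}$ is representable by a quasi-affine spectral Deligne-Mumford stack; the two projections $q: \mathsf{Z} \rightarrow \Spec A$ and $q': \mathsf{Z} \rightarrow \Spec B$ are base changes of $f$ and $p$ respectively, so both are fiber smooth, and moreover $q'$ is surjective since $p$ is. Choose an \'etale surjection $\Spec R \rightarrow \mathsf{Z}$ with $R$ a connective $\bbE_\infty$-ring, which exists because $\mathsf{Z}$ is quasi-compact.

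Next, I would analyze the two composite morphisms. The composition $\Spec R \rightarrow \mathsf{Z} \rightarrow \Spec A$ is the composite of an \'etale morphism with a fiber smooth morphism, hence fiber smooth and in particular flat. Since $A$ is discrete, flatness of the induced map $A \rightarrow R$ gives $\pi_i R \cong \pi_i A \otimes_{\pi_0 A} \pi_0 R = 0$ for all $i > 0$, so $R$ is discrete. By \ref{characterization of fiber smooth morphism}, the underlying ring map $\pi_0 A \rightarrow \pi_0 R$ is smooth in the classical sense, and since ordinary smoothness preserves reducedness over a reduced base, $\pi_0 R$ is reduced. Thus $R$ is discrete and reduced.

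The final step is to descend these properties to $B$. The composition $\Spec R \rightarrow \mathsf{Z} \rightarrow \Spec B$ is a fiber smooth surjection, hence faithfully flat. Flatness combined with the discreteness of $R$ gives $\pi_i B \otimes_{\pi_0 B} \pi_0 R \cong \pi_i R = 0$ for $i > 0$, which forces $\pi_i B = 0$ by faithful flatness of $\pi_0 B \rightarrow \pi_0 R$; hence $B$ is discrete. Reducedness of $B$ then follows from the standard fact that reducedness descends along faithfully flat maps of ordinary rings, using that $\pi_0 R$ is reduced. The only genuinely structural step is the construction of the auxiliary chart $\Spec R$, which rests on the quasi-affine diagonal hypothesis; the remainder is a routine faithfully flat descent in the spectral setting.
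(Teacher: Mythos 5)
Your proof is correct. It differs from the paper's proof in spirit: the paper gives a two-line argument that simply invokes two locality statements---that the property of being a $0$-truncated spectral Deligne-Mumford stack is local for the flat topology (\cite[2.8.3.9]{lurie2018sag}), and that reducedness of ordinary algebraic spaces is local for the smooth topology---whereas you unwind exactly what those results say in this situation by forming the pullback $\mathsf{Z} = \Spec A \times_X \Spec B$, choosing an affine \'etale chart $\Spec R \to \mathsf{Z}$, and then carrying out the ascent (along the smooth map $\Spec R \to \Spec A$) and the descent (along the faithfully flat map $\Spec R \to \Spec B$) by direct computation of homotopy groups and by the elementary facts that smooth maps preserve reducedness and that faithfully flat maps of commutative rings reflect it. What your approach buys is self-containedness: a reader sees the actual mechanism---the spectral flatness formula $\pi_i R \cong \pi_i B \otimes_{\pi_0 B}\pi_0 R$ together with faithful flatness of $\pi_0 B \to \pi_0 R$---rather than being sent to external references for a black-boxed descent statement. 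The price is length; the paper's proof is terser at the cost of opacity. Both are sound, and the essential intermediate object in each case is the quasi-affine fiber product $\mathsf{Z}$, implicitly in the paper's appeal to locality and explicitly in yours.
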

\begin{proof}
	According to \cite[2.8.3.9]{lurie2018sag}, the property of being a $0$-truncated spectral Deligne-Mumford stack is local with respect to the flat topology, so the desired equivalence follows from the fact that for ordinary algebraic spaces, the property of being reduced is local with respect to the smooth topology (see, for example, \cite[\href{https://stacks.math.columbia.edu/tag/034E}{Tag 034E}]{stacks-project}).
\end{proof}

\begin{definition}
	Let $X$ be a quasi-geometric spectral algebraic stack. Let us say that $X$ is \emph{reduced} if it satisfies the equivalent conditions of \ref{the equivalent conditions of being reduced}.
\end{definition}

\begin{proposition}\label{reduced closed substacks of quasi-geometric spectral algebraic stacks}
	Let $j:U\rightarrow X$ be a representable open immersion of quasi-geometric spectral algebraic stacks. Then there exist a reduced quasi-geometric spectral algebraic stack $K$ and a representable closed immersion $i:K\rightarrow X$ such that $|i||K|=|X|-|j||U|$.
\end{proposition}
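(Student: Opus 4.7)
The plan is to construct $K$ by descent from a fiber smooth presentation of $X$, mimicking the classical construction of the reduced induced closed substack. First, invoke condition (iii) of \ref{quasi-geometric spectral algebraic stack} to choose a fiber smooth surjection $p:\sfX_0 = \Spec A \rightarrow X$, and form its \Cech nerve $\sfX_\bullet$ in the $\infty$-category of fpqc sheaves. Since the diagonal of $X$ is representable quasi-affine, each iterated fiber product $\sfX_n = \sfX_0 \times_X \cdots \times_X \sfX_0$ is a quasi-affine (hence quasi-compact quasi-separated) spectral Deligne-Mumford stack, and all face maps $d_i:\sfX_n \rightarrow \sfX_{n-1}$ are fiber smooth surjections, being iterated base changes of $p$.

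Next, pull back the representable open immersion $j:U\rightarrow X$ along the augmentation $\sfX_\bullet \rightarrow X$ to obtain a simplicial quasi-compact open subspace $U_\bullet \hookrightarrow \sfX_\bullet$. By virtue of \cite[3.1.6.3]{lurie2018sag}, for each $n\geq 0$ there is a reduced closed substack $K_n\hookrightarrow \sfX_n$ with $|K_n| = |\sfX_n|-|U_n|$; in particular, $K_0 \simeq \Spec B$ for a discrete reduced commutative ring $B$. The main technical step is to show that the face and degeneracy maps of $\sfX_\bullet$ restrict to make $K_\bullet$ a simplicial object. The key input is the following: if $f:\sfY'\rightarrow \sfY$ is a fiber smooth morphism between quasi-compact quasi-separated spectral algebraic spaces and $Z\hookrightarrow \sfY$ is the reduced closed substack with support $|Z|$, then $Z\times_{\sfY}\sfY'$ is the reduced closed substack of $\sfY'$ with support $|f|^{-1}|Z|$. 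Indeed, base change of a closed immersion is a closed immersion with the expected support by \ref{basic properties of the underlying topological space}, and reducedness is preserved because fiber smooth morphisms are flat with smooth underlying ordinary morphism by \ref{characterization of fiber smooth morphism}, so the uniqueness in the universal property of the reduced closed substack applies. Using $d_i^{-1}(|U_{n-1}|) = |U_n|$, we conclude that each $d_i$ restricts to $K_n \rightarrow K_{n-1}$, and an analogous argument handles the degeneracies.

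Now define $K := \colim K_\bullet$ in the $\infty$-category of fpqc sheaves, equipped with the natural morphism $i:K\rightarrow X$. Since each $K_n \hookrightarrow \sfX_n$ is a closed immersion and closed immersions satisfy fpqc descent, the morphism $i$ is a representable closed immersion and its base change along $p$ recovers $K_0\hookrightarrow \sfX_0$. Consequently, $K$ inherits the fpqc sheaf property and the representable quasi-affine diagonal from $X$ through $i$, and the base change $\Spec B = K_0 \rightarrow K$ of $p$ is a fiber smooth surjection from an affine whose underlying $\bbE_\infty$-ring is discrete and reduced; this identifies $K$ as a reduced quasi-geometric spectral algebraic stack via \ref{the equivalent conditions of being reduced}. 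Finally, to verify $|i||K| = |X|-|j||U|$, invoke \ref{quotient topology} so that the surjections $|K_0|\twoheadrightarrow |K|$ and $|\sfX_0|\twoheadrightarrow |X|$ are quotient maps; since $|K_0| = |\sfX_0|-|p|^{-1}|j||U|$ by construction of $K_0$, passing to the quotient yields the desired identity.

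The main obstacle I expect is Step 3---checking that the reduced induced substructures $K_n$ are stable under the face and degeneracy maps of $\sfX_\bullet$. Everything there ultimately reduces to the classical observation that smooth pullback preserves reducedness of ordinary algebraic spaces, but cleanly packaging this with the universal property of the reduced closed substack for each level of the \Cech nerve (and ensuring simplicial compatibility) requires care.
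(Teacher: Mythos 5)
Your proof follows the paper's argument precisely: choose a fiber smooth surjection from an affine, form the \Cech nerve, take level-wise reduced closed substacks $\sfK_\bullet$, realize in fpqc sheaves, and descend the closed-immersion, reducedness, and quasi-geometricity properties along the cover; the key lemma you isolate (fiber smooth base change preserves reduced closed substacks, via \ref{characterization of fiber smooth morphism}) is exactly what underlies the paper's ``by construction'' claim that $\sfK_0 \simeq \sfX_0\times_X K$. One small inaccuracy: the degeneracies of the \Cech nerve are \emph{not} fiber smooth (they are sections of the quasi-affine map $p$, hence closed immersions), so ``an analogous argument handles the degeneracies'' does not literally go through; instead one either produces the degeneracy maps of $\sfK_\bullet$ from the universal property of reduced closed substacks (a morphism from a reduced spectral Deligne--Mumford stack whose image lies set-theoretically in the given closed subset factors uniquely through its reduced structure), or sidesteps degeneracies entirely by computing the geometric realization over $\Delta_s^{\op}$.
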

\begin{proof}
	Choose a fiber smooth surjection $f:\sfX_0\rightarrow X$, where $\sfX_0$ is affine. Let $\sfX_\bullet$ denote the \Cech nerve of the morphism $f$, which is a simplicial object of the $\infty$-category $\SpDM$ of spectral Deligne-Mumford stacks. The projections $U\times_X\sfX_n \rightarrow \sfX_n$ are open immersions of spectral Deligne-Mumford stacks, so that there is a simplicial object $\sfK_\bullet$ of $\SpDM$, where each $\sfK_n$ is the reduced closed substack complementary to $U\times_X\sfX_n$, and is quasi-geometric. Let $K$ denote the geometric realization of $\sfK_\bullet$ in the $\infty$-category of fpqc sheaves; it is a quasi-geometric stack by virtue of \cite[9.1.1.5]{lurie2018sag}. By construction, the diagram of fpqc sheaves
$$
\Pull{\sfK_0}{\sfX_0}{K}{X}{}{}{}{}
$$
is a pullback square, from which it follows immediately that $K$ is a reduced quasi-geometric spectral algebraic stack. Applying \cite[9.1.1.3]{lurie2018sag} to the diagram above, we deduce that the canonical morphism $K\rightarrow X$ is representable quasi-affine, thereby a closed immersion with the property that $|K|$ is complementary to $|U|$ (regarded as subsets of $|X|$).
\end{proof}

\section{Excision Squares}\label{Sec: Excision Squares}
	Our goal in this section is to supply a special presentation of quasi-geometric spectral algebraic stacks in the spirit of ``induction principle" for ordinary algebraic stacks; see \cite[4.1]{MR3754421} and \cite[Theorem E]{MR3754421}. For this, we introduce excision squares and stacky scallop decompositions of such stacks. 

\begin{definition}\label{excision squares of quasi-geometric stacks}
	A diagram of quasi-geometric spectral algebraic stacks $\sigma:$
$$
\Pull{U'}{X'}{U}{X}{}{}{f}{j}
$$
is an \emph{excision square} if it satisfies the following conditions:
\begin{enumerate}[(i)]
\item The diagram $\sigma$ is a pullback square.
\item The morphism $j$ is a representable open immersion.
\item The morphism $f$ is representable \'etale.
\item The projection $K\times_X X' \rightarrow K$ is an equivalence (here $K$ denotes the reduced closed substack of $X$ complementary to $U$; see \ref{reduced closed substacks of quasi-geometric spectral algebraic stacks}). 
\end{enumerate}
\end{definition}

\begin{remark}\label{reduced closed substacks and excision squares of spectral Deligne-Mumford stacks} 
	According to \cite[p.321]{lurie2018sag}, a diagram of spectral Deligne-Mumford stacks 
$$
\Pull{\sfU'}{\sfX'}{\sfU}{\sfX}{j'}{f'}{}{}
$$
is an \emph{excision square} if it is a pushout square, $j'$ is an open immersion, and $f'$ is \'etale. If it is a diagram of quasi-geometric spectral Deligne-Mumford stacks, then it is an excision square in the sense of \cite[p.321]{lurie2018sag} if and only if the associated square of quasi-geometric stacks is an excision square in the sense of \ref{excision squares of quasi-geometric stacks} (see also \cite[9.1.4.4]{lurie2018sag}). 
\end{remark}

\begin{pg}
	Let $\sfX$ be a spectral Deligne-Mumford stack. According to \cite[2.5.3.1]{lurie2018sag}, a \emph{scallop decomposition} of $\sfX$ consists of a sequence of open immersions $\emptyset \simeq \sfU_0\rightarrow \sfU_1 \rightarrow \cdots \rightarrow \sfU_n \simeq \sfX$ such that for each $1\leq i \leq n$, there exists an excision square of spectral Deligne-Mumford stacks
$$
\Pull{\sfV}{\sfY}{\sfU_{i-1}}{\sfU_i,}{}{}{}{}
$$
where $\sfY$ is affine and $\sfV$ is quasi-compact. 
This is a useful device for proving many basic results in the theory of spectral algebraic geometry by reducing to the affine case. However, a spectral Deligne-Mumford stack admits a scallop decomposition if and only if it is a quasi-compact quasi-separated spectral algebraic space (see \cite[3.4.2.1]{lurie2018sag}), so that the concept of a scallop decomposition is not adequate for spectral Deligne-Mumford stacks which are not spectral algebraic spaces. To incorporate a wider class of spectral algebro-geometric objects, we should relax the requirement that $\sfY$ is affine in the diagram above; we therefore allow $\sfY$ to be quasi-geometric spectral algebraic stacks, which is sufficient for our needs in this paper:
\end{pg}

\begin{definition}\label{stacky scallop decompositions}
	Let $X$ be a quasi-geometric spectral algebraic stack. A \emph{stacky scallop decomposition} of $X$ consists of a sequence of representable open immersions of quasi-geometric spectral algebraic stacks
$$
\emptyset \simeq U_0\rightarrow U_1 \rightarrow \cdots \rightarrow U_n \simeq X
$$
satisfying the following condition: for each $1\leq i \leq n$, there exists an excision square 
$$
\Pull{V}{W}{U_{i-1}}{U_i}{}{}{}{}
$$
of quasi-geometric spectral algebraic stacks (see \ref{excision squares of quasi-geometric stacks}).
\end{definition}

\begin{pg}
	The notion of \emph{Nisnevich covering} of quasi-compact quasi-separated spectral algebraic space (see \cite[3.7.1.1]{lurie2018sag}) admits a straightforward extension to quasi-geometric spectral algebraic stacks:
\end{pg}

\begin{definition}\label{Nisnevich covering of quasi-geometric spectral algebraic stacks}
	Let $X$ be a quasi-geometric spectral algebraic stack. Let $\{p_\alpha: W_\alpha \rightarrow X\}$ be a collection of representable \'etale morphisms of quasi-geometric spectral algebraic stacks. We say that $\{p_\alpha\}$ is a \emph{Nisnevich covering} of $X$ if there exists a sequence of open immersions of quasi-geometric spectral algebraic stacks 
$$
\emptyset \simeq U_{n+1} \hookrightarrow \cdots \hookrightarrow U_0 \simeq X
$$
satisfying the following condition: for each $0\leq i \leq n$, let $K_i$ denote the reduced closed substack of $U_i$ which is complementary to $U_{i+1}$ (see \ref{reduced closed substacks of quasi-geometric spectral algebraic stacks}). Then the composition $K_i\rightarrow U_i \rightarrow X$ factors through some $p_\alpha$. 
\end{definition}

\begin{lemma}\label{stacky scallop decomposition induced by a Nisnevich covering}
	Let $p:W \rightarrow X$ be a Nisnevich covering of quasi-geometric spectral algebraic stacks. Then $p$ induces a stacky scallop decomposition of $X$. 
\end{lemma}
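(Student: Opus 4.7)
The plan is to reverse the Nisnevich filtration and, at each stage, exploit the factorization $K_i \to W$ supplied by the Nisnevich hypothesis to manufacture the étale piece of an excision square. Given a Nisnevich sequence $\emptyset \simeq U_{n+1} \hookrightarrow \cdots \hookrightarrow U_0 \simeq X$, I would set $U'_j := U_{n+1-j}$ to obtain a chain of representable open immersions $\emptyset \simeq U'_0 \hookrightarrow U'_1 \hookrightarrow \cdots \hookrightarrow U'_{n+1} \simeq X$ of quasi-geometric spectral algebraic stacks. For each $j \geq 1$, \ref{reduced closed substacks of quasi-geometric spectral algebraic stacks} provides a reduced closed substack $K_j \hookrightarrow U'_j$ complementary to $U'_{j-1}$, and the Nisnevich property gives a morphism $\iota_j: K_j \to W$ lifting $K_j \to U'_j \to X$.

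The central construction is then to produce, for each $j$, a representable étale $Y_j \to U'_j$ whose restriction to $K_j$ recovers $K_j$. Set $W'_j := W \times_X U'_j$; this is a representable open substack of $W$ (hence quasi-geometric spectral algebraic) and is representable étale over $U'_j$. The data $(\iota_j, K_j \hookrightarrow U'_j)$ assembles into a map $\sigma_j: K_j \to W'_j$ which, after base change along $K_j \hookrightarrow U'_j$, becomes a section of the representable étale morphism $K_j \times_{U'_j} W'_j \to K_j$. Because any section of an étale morphism is an étale monomorphism and hence an open immersion, $\sigma_j$ exhibits $K_j$ as an open substack of $K_j \times_{U'_j} W'_j$; applying \ref{reduced closed substacks of quasi-geometric spectral algebraic stacks} again, let $Z_j$ be its complementary reduced closed substack. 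Since $K_j \times_{U'_j} W'_j \hookrightarrow W'_j$ is a closed immersion (being the pullback of $K_j \hookrightarrow U'_j$), we may view $Z_j$ as a closed substack of $W'_j$, and I define $Y_j$ to be its open complement in $W'_j$. The resulting square
$$
\Pull{U'_{j-1} \times_{U'_j} Y_j}{Y_j}{U'_{j-1}}{U'_j}{}{}{}{}
$$
manifestly satisfies conditions (i)--(iii) of \ref{excision squares of quasi-geometric stacks}, while condition (iv) holds because, by construction, $K_j \times_{U'_j} Y_j$ is the open part of $K_j \times_{U'_j} W'_j$ obtained by removing $Z_j$, which is precisely $\sigma_j(K_j) \simeq K_j$. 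Concatenating these excision squares across $j = 1, \ldots, n+1$ yields the required stacky scallop decomposition.

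The main obstacle to making the sketch rigorous is the pair of structural facts invoked above: (a) a section of a representable étale morphism of quasi-geometric spectral algebraic stacks is an open immersion, and (b) the complement operation used to define $Y_j$ behaves well under base change, in the sense that $K_j \times_{U'_j} Y_j$ genuinely coincides with the open piece $\sigma_j(K_j)$ and not merely with its underlying topological space. For (a) one reduces to the spectral algebraic space case via pullback along a fiber smooth cover (\ref{quasi-geometric spectral algebraic stack}(iii)) and appeals to the standard fact that étale monomorphisms are open immersions; for (b) one combines the topological analysis of Section \ref{Sec: Points of Quasi-geometric Spectral Algebraic Stacks}, in particular the correspondence between quasi-compact open subsets and open substacks from \ref{quasi-compact open subsets and quasi-geometric stacks} together with \ref{basic properties of the underlying topological space}, with the reduced closed substack theory of \ref{reduced closed substacks of quasi-geometric spectral algebraic stacks} to identify the relevant substacks of $W'_j$ correctly.
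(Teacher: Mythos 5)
Your proposal follows essentially the same route as the paper's proof, just with the Nisnevich filtration reindexed so that it runs from $\emptyset$ to $X$: the paper likewise produces $W_m \subseteq U_m\times_X W$ (your $Y_j \subseteq W'_j$) by removing the closed subset $|i_m|(|K_m\times_X W|-|s_m||K_m|)$, where $s_m$ is the section of $K_m\times_X W\to K_m$ coming from the Nisnevich hypothesis, and then forms exactly your excision square. Your deferred point (a) is standard and the paper also uses it implicitly; your point (b)—that the base-changed section $K_j\to K_j\times_{U'_j}Y_j$ is a surjective open immersion and hence an equivalence—is precisely what the paper verifies to get condition (iv) of the definition of excision square.

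One verification you cite \ref{quasi-compact open subsets and quasi-geometric stacks} for but do not actually carry out is the quasi-compactness of $|Y_j| = |W'_j|\setminus|Z_j|$, which is what licenses that appeal: a general open subset of $|W'_j|$ need not correspond to a quasi-geometric open substack. The paper settles this by observing that the open set is the disjoint union of $|i_m\circ s_m|(|K_m|)$ (quasi-compact, being homeomorphic to $|K_m|$) and $|U_{m+1}\times_X W|$ (quasi-compact, being an open immersion's image of a quasi-geometric stack into another). Adding that observation closes the gap and makes your argument coincide with the paper's.
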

\begin{proof}
	In the situation of \ref{Nisnevich covering of quasi-geometric spectral algebraic stacks}, for each $0\leq m \leq n$, consider a subset of $|U_m\times_XW|$ which is complementary to $|i_m|(|K_m\times_XW|-|s_m||K_m|)$, where $i_m:K_m\times_XW\rightarrow U_m\times_XW$ is the closed immersion determined by $K_m\rightarrow U_m$ and $s_m$ is a section of the projection $K_m\times_XW \rightarrow K_m$ (note that $p$ is a Nisnevich covering). Since $s_m$ is an open immersion, this subset is open. Moreover, it is quasi-compact because it can be written as a disjoint union of the image of the map $|i_m\circ s_m|$ and $|U_{m+1}\times_XW|$. According to \ref{quasi-compact open subsets and quasi-geometric stacks}, this quasi-compact open subset determines a representable open immersion $W_m\rightarrow U_m\times_XW$ of quasi-geometric spectral algebraic stacks. Composing this with the projection to $U_m$, we obtain a representable \'etale morphism $W_m\rightarrow U_m$. Note that the composition $i_m\circ s_m$ factors through $W_m$, inducing a section of the projection $K_m\times_{U_m}W_m\rightarrow K_m$. By construction, this section is a surjective open immersion, hence an equivalence. Consequently, the pullback square of quasi-geometric spectral algebraic stacks
$$
\Pull{U_{m+1}\times_{U_m}W_m}{W_m}{U_{m+1}}{U_m}{}{}{}{}
$$
is an excision square, thereby completing the proof.
\end{proof}

\begin{pg}
	Our primary goal in this section is to produce some presentation of quasi-geometric algebraic stacks, which allows us to apply some d\'evissage method for the study of those stacks. To obtain such a presentation, we will make use of the Hilbert functors in the setting of spectral algebraic geometry. Note that \cite[8.3.3]{MR2717174} shows the representability of the Hilbert functors in the derived setting; we will prove a similar result for the spectral Hilbert functors. We begin by defining the Hilbert functors in the spectral setting. Let $p:X\rightarrow S$ be a representable morphism in $\Fun(\CAlg^{\cn}, \widehat{\SSet})$. Let $\pi:\CAlg^{\cn}_S\rightarrow \CAlg^{\cn}$ be a left fibration classified by $S$ (see \cite[3.3.2.2]{MR2522659}). Let us identify objects of $\CAlg^{\cn}_S$ with pairs $(A, \eta)$, where $A$ is a connective $\bbE_\infty$-ring and $\eta\in S(A)$ is an $A$-valued point of $S$. Note that the opposite of $\CAlg^{\cn}_S$ can be identified with the fiber product $(\CAlg^{\cn})^{\op}\times_{\Fun(\CAlg^{\cn}, \widehat{\SSet})} \Fun(\CAlg^{\cn}, \widehat{\SSet})_{/S}$, where $(\CAlg^{\cn})^{\op} \rightarrow \Fun(\CAlg^{\cn}, \SSet)$ is the Yoneda embedding. Consider the composition
$$
(\CAlg^{\cn}_S)^{\op} \subseteq \Fun(\CAlg^{\cn}, \widehat{\SSet})_{/S} \rightarrow \Fun(\CAlg^{\cn}, \widehat{\SSet})_{/X}\rightarrow \Fun(\CAlg^{\cn}, \widehat{\SSet}),
$$ 
where the middle arrow is the base change functor $S'\mapsto S'\times_SX$ and the last is the forgetful functor. This composition can be described more informally as follows: to each pair $(A, \eta)$, it assigns the fiber product $\Spec A\times_SX$, where $\Spec A \rightarrow S$ is determined by $\eta$. We also consider the composition $\Fun(\Delta^1, \SpDM)\rightarrow \Fun(\{1\}, \SpDM)\rightarrow \Fun(\CAlg^{\cn}, \widehat{\SSet})$, where the first map is an evaluation at $\{1\}\subseteq \Delta^1$ and the second is the fully faithful embedding. Let $\calC$ denote the full subcategory of the fiber product 
$$
(\CAlg^{\cn}_S)^{\op}\times_{\Fun(\CAlg^{\cn}, \widehat{\SSet})}\Fun(\Delta^1, \SpDM)
$$
spanned by those morphisms $f:\sfY \rightarrow \Spec A\times_SX$, where $\sfY$ is a spectral Deligne-Mumford stack, $f$ is a closed immersion, and the composition of $f$ with the projection $\Spec A\times_SX \rightarrow \Spec A$ is proper, flat, and locally almost of finite presentation. Let $\widehat{\Hilb}_{X/S}:\CAlg^{\cn}  \rightarrow \widehat{\Cat}_\infty$ denote the functor classifying the Cartesian fibration $\calC \rightarrow (\CAlg^{\cn}_S)^{\op}\stackrel{\pi^{\op}}{\rightarrow} (\CAlg^{\cn})^{\op}$ (here $\widehat{\Cat}_\infty$ denotes the $\infty$-category of (not necessarily small) $\infty$-categories; see \cite[3.0.0.5]{MR2522659}). Let $\Hilb_{X/S}:\CAlg^{\cn} \rightarrow \widehat{\SSet}$ be the functor given by the formula $\Hilb_{X/S}(A)=\widehat{\Hilb}_{X/S}(A)^\simeq$, where $\widehat{\Hilb}_{X/S}(A)^\simeq$ denotes the largest Kan complex contained in $\widehat{\Hilb}_{X/S}(A)$. Note that there is a canonical morphism of functors $\Hilb_{X/S}\rightarrow S$. 
\end{pg}

\begin{theorem}\label{spectral Hilbert functor}
	Let $p:X\rightarrow S$ be a morphism in $\Fun(\CAlg^{\cn},\widehat{\SSet})$ which is representable, separated, and locally almost of finite presentation. Then the canonical morphism $\Hilb_{X/S}\rightarrow S$ is a relative spectral algebraic space which is locally almost of finite presentation.
\end{theorem}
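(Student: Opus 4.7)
The plan is to invoke Lurie's spectral Artin representability theorem (in the spirit of \cite[Chapter 18]{lurie2018sag}) applied to the functor $F:=\Hilb_{X/S}$. Since the formation of $\Hilb_{X/S}$ is compatible with base change---for every morphism $T\to S$ one has a canonical equivalence $\Hilb_{X/S}\times_S T \simeq \Hilb_{X\times_ST/T}$---and since the property of being a relative spectral algebraic space locally almost of finite presentation may be checked after base change to affines (\ref{properties of representable morphisms}), we reduce immediately to the case $S=\Spec A$ for some connective $\bbE_\infty$-ring $A$. Under this assumption $X$ is represented by a spectral algebraic space $\sfX$ which is separated and locally almost of finite presentation over $\Spec A$, and we must show that $F=\Hilb_{\sfX/\Spec A}$ is representable by a spectral algebraic space locally almost of finite presentation over $\Spec A$.

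We then verify the four hypotheses of spectral Artin representability for $F$. First, $F$ is an \'etale (in fact fpqc) sheaf, which reduces to descent for closed immersions together with descent for the adjectives ``proper'', ``flat'', and ``locally almost of finite presentation''. Second, $F$ is infinitesimally cohesive and nilcomplete: a $B$-valued point of $F$ is the datum of a closed immersion $i:\sfY\hookrightarrow \sfX_B:=\sfX\times_{\Spec A}\Spec B$ with $\sfY$ proper, flat, and locally almost of finite presentation over $\Spec B$, equivalently (via $i_\ast$) an almost perfect connective $\calO_{\sfX_B}$-algebra of a specific form; infinitesimal cohesion and nilcompleteness then transfer from the corresponding properties of the moduli of almost perfect quasi-coherent sheaves established in \cite{lurie2018sag}. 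Third, $F$ admits a connective cotangent complex: at a $B$-point $\eta$ given by $i:\sfY\hookrightarrow \sfX_B$, derived deformation theory of closed immersions identifies the tangent complex of $F$ over $\Spec A$ with $\pi_\ast\bigl(L_{\sfY/\sfX_B}[1]^\vee\bigr)$, where $\pi:\sfY\to\Spec B$ is the structure map, and connectivity of $L_{F/\Spec A}|_\eta$ follows because $L_{\sfY/\sfX_B}$ is concentrated in cohomological degrees $\leq -1$ (as $i$ is a closed immersion) while $\pi$ is proper and flat of bounded Tor-amplitude. Fourth, the classical truncation $\tau_{\leq 0}F$ agrees, by direct inspection of its functor of points, with the classical Hilbert functor of the ordinary algebraic space $X_0:=(\tau_{\leq 0}\sfX)\times_{\Spec A}\Spec \pi_0 A$ over $\Spec \pi_0 A$, whose representability by an ordinary algebraic space locally of finite presentation is a classical theorem of Artin (see \cite{stacks-project}).

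Once these conditions are verified, the representability theorem produces a spectral Deligne--Mumford stack representing $F$; its diagonal is then a monomorphism, because for any $B$ the groupoid of closed subobjects of $\sfX_B$ satisfying the stated properties is discrete (a closed immersion has no nontrivial automorphisms as a subobject), so the representing object is in fact a spectral algebraic space locally almost of finite presentation over $\Spec A$. The main obstacle I anticipate will be the precise identification of the cotangent complex and a careful verification of its connectivity and almost perfection---in particular, controlling the interaction between the relative cotangent complex of the closed immersion $i$, the pushforward along the proper flat morphism $\pi$, and the almost-finite-presentation hypothesis on $\sfX\to\Spec A$, so that all relevant finiteness and connectivity bounds are preserved uniformly in the base.
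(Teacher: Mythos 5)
Your overall strategy---Lurie's spectral Artin representability theorem (after reduction to $S$ affine), matching the classical truncation with the ordinary Hilbert functor of Artin, and deformation theory of closed immersions for the cotangent complex---is the same route the paper takes. However there are two points where the proposal glosses over genuine work.

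The cotangent complex identification is imprecise. The paper computes the cotangent complex directly: the functor $M\mapsto \fib\bigl(\Hilb_{X/S}(B\oplus M)\to \Hilb_{X/S}(B)\times_{S(B)}S(B\oplus M)\bigr)$ is corepresented, via \cite[19.4.3.1, 6.4.5.3]{lurie2018sag}, by $\Sigma^{-1}f_+L_{\sfY/X\times_S\Spec B}$, where $f_+$ is the \emph{left} adjoint to $f^\ast$, which exists precisely because $f:\sfY\to\Spec B$ is proper, flat, and locally almost of finite presentation. Connectivity then falls out of the $1$-connectivity of $L_{\sfY/X\times_S\Spec B}$ (closed immersion, \cite[17.1.4.3]{lurie2018sag}) together with flatness of $f$, and almost perfection from \cite[17.1.5.1]{lurie2018sag} and \cite[6.4.5.2]{lurie2018sag}. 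You instead compute a tangent complex $\pi_\ast(L_{\sfY/\sfX_B}[1]^\vee)$ and argue connectivity of its dual via a ``bounded Tor-amplitude'' bound on $\pi$. Proper flat l.a.f.p.\ morphisms do not in general have bounded Tor-amplitude in the spectral setting, so this is not the right invariant; and routing through the tangent complex forces you into Grothendieck/Serre duality to recover the cotangent complex, which the $f_+$ formalism sidesteps entirely (note also the shift in $L[1]^\vee$ is ambiguous and, under the natural reading $(L[1])^\vee$, off from the normal sheaf $L^\vee[1]$).

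The assertion ``locally almost of finite presentation over $\Spec A$'' at the end of your argument is not justified. In the paper this is a separate step: one reduces to $S$ affine and $0$-truncated, invokes \cite[17.4.2.2]{lurie2018sag} together with the already-established infinitesimal cohesion and almost-perfect cotangent complex to reduce l.a.f.p.\ to the filtered-colimit criterion $(\ast)$ of \cite[17.4.2.1]{lurie2018sag}, and then verifies that criterion by comparison with the classical Hilbert functor. The fact that $\tau_{\leq 0}\Hilb_{X/S}$ is l.f.p.\ over $\pi_0 A$ is an input to Artin representability, not by itself the finiteness conclusion for the full spectral functor; the filtered-colimit check over non-discrete $\bbE_\infty$-rings is where the work lies, and your proposal does not carry it out.
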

\begin{proof}
	We will use the criterion for representability supplied by \cite[18.1.0.2]{lurie2018sag}. The canonical morphism $\Hilb_{X/S}\rightarrow S$ is infinitesimally cohesive and nilcomplete (see \cite[17.3.7.1]{lurie2018sag}) by virtue of \cite[16.3.0.1, 16.3.2.1]{lurie2018sag} and  \cite[19.4.1.2, 19.4.2.3]{lurie2018sag}, respectively. 

We next show that the morphism $\Hilb_{X/S}\rightarrow S$ admits a relative cotangent complex of \cite[17.2.4.2]{lurie2018sag}. We will prove this by verifying conditions $(a)$ and $(b)$ of \cite[17.2.4.3]{lurie2018sag}. Let $A$ be a connective $\bbE_\infty$-ring and let $\eta\in \Hilb_{X/S}(A)$ be a point corresponding to a pair $(\zeta, i:\sfY\rightarrow X\times_S\Spec A)$, where $\zeta \in S(A)$ is a point and $i:\sfY \rightarrow X\times_S\Spec A$ is a closed immersion of spectral Deligne-Mumford stacks for which the composition $f:\sfY\rightarrow \Spec A\times_SX \rightarrow \Spec A$ is proper, flat, and locally almost of finite presentation. Let $F:\Mod^{\cn}_A \rightarrow \SSet$ be the functor defined by the formula 
$$
F(M)=\fib(\Hilb_{X/S}(A\oplus M) \rightarrow \Hilb_{X/S}(A)\times_{S(A)}S(A\oplus M)),
$$ 
where the fiber is taken over the point of $\Hilb_{X/S}(A)\times_{S(A)}S(A\oplus M)$ determined by $\eta$. We wish to show that $F$ is corepresented by an almost connective $A$-module. According to \cite[19.4.3.1]{lurie2018sag}, the fiber is canonically equivalent to $\Map_{\QCoh(\sfY)}(L_{\sfY/X\times_S\Spec A}, \Sigma f^\ast M)$. By virtue of \cite[6.4.5.3]{lurie2018sag}, $f^\ast:\QCoh(\Spec A)\rightarrow \QCoh(Y)$ admits a left adjoint $f_+$, so that $F(M)$ is corepresented by an $A$-module $\Sigma^{-1}f_+L_{\sfY/X\times_S\Spec A}$. Since $i$ is a closed immersion, it follows from \cite[17.1.4.3]{lurie2018sag} that $L_{\sfY/X\times_S\Spec A}$ is $1$-connective, so the $A$-module $\Sigma^{-1}f_+L_{\sfY/X\times_S\Spec A}$ is connective as desired (here we use the fact that $f$ is flat). Condition $(b)$ is an immediate consequence of \cite[6.4.5.4]{lurie2018sag}. We note that $L_{\sfY/X\times_S\Spec A}$ is almost perfect because $i$ is locally almost of finite presentation (see \cite[17.1.5.1]{lurie2018sag}), so that $\Sigma^{-1}f_+L_{\sfY/X\times_S\Spec A}$ is almost perfect by virtue of \cite[6.4.5.2]{lurie2018sag} and \cite[7.2.4.11]{lurie2017ha}. We conclude that the relative cotangent complex $L_{\Hilb_{X/S}/S}$ is not only connective, but also almost perfect. 

We now show that $\Hilb_{X/S}\rightarrow S$ is a relative spectral algebraic space. Since the formation of Hilbert functors is compatible with base change, we may assume that $S$ is an affine spectral Deligne-Mumford stack. We wish to show that $\Hilb_{X/S}$ is representable by a spectral algebraic space. Let $\CAlg^{\heartsuit}$ denote the $\infty$-category of discrete $\bbE_\infty$-rings, which can be identified with the nerve of the category of commutative rings; see \cite[7.1.0.3]{lurie2017ha}. The restriction of $\Hilb_{X/S}$ to $\CAlg^\heartsuit$ is equivalent to the ordinary Hilbert functor associated to the morphism of ordinary algebraic spaces $\tau_{\leq 0}X \rightarrow \tau_{\leq 0}S$ (here we use the fact that $p$ is a relative spectral algebraic space; see \cite[3.2.1.1]{lurie2018sag}), which is representable by an ordinary algebraic space; see \cite[6.2]{MR0260746}. Since $S$ is assumed to be representable, it admits a cotangent complex, infinitesimally cohesive, and nilcomplete by virtue of \cite[17.2.5.4, 17.3.1.2, 17.3.2.3]{lurie2018sag}. Combining \cite[17.3.7.3]{lurie2018sag} and \cite[17.3.9.1]{lurie2018sag} with the above discussion, we deduce that $\Hilb_{X/S}$ satisfies the hypothesis of \cite[18.1.0.2]{lurie2018sag}, and is therefore representable by a spectral algebraic space as desired.

It remains to prove that the morphism $\Hilb_{X/S}\rightarrow S$ is locally almost of finite presentation. We may assume that $S$ is affine. Using \cite[19.4.2.3]{lurie2018sag}, we may further assume that $S$ is $0$-truncated. We have already seen that $\Hilb_{X/S}\rightarrow S$ is infinitesimally cohesive and admits a relative cotangent complex which is almost perfect. By virtue of \cite[17.4.2.2]{lurie2018sag}, it will suffice to check condition $(\ast)$ of \cite[17.4.2.1]{lurie2018sag}: for every filtered diagram $\{A_\alpha\}$ of commutative rings having colimit $A$, the canonical map
$$
\colim \Hilb_{X/S}(A_\alpha)\rightarrow \colim S(A_\alpha) \times_{S(A)}\Hilb_{X/S}(A)
$$
is an equivalence. Since the restrictions of $\Hilb_{X/S}$ and $\Hilb_{\tau_{\leq 0}X/S}$ to $\CAlg^\heartsuit$ are equivalent, we can reduce to the case where $p:X\to S$ is a morphism of ordinary algebraic spaces, in which case the desired result follows from its classical counterpart (see \cite[6.2]{MR0260746} and \cite[8.14.2]{MR0217086}).
\end{proof}

\begin{remark}\label{the subfunctor of Hilbert functor classifying clopen immersions}
	Let $\Hilb_{X/S}^{\et}\subseteq \Hilb_{X/S}$ be the subfunctor which carries an $\bbE_\infty$-ring $A$ to the summand of $\Hilb_{X/S}(A)$ spanned by those pairs $(A, i:\sfY\rightarrow \Spec A \times_SX)$ for which $i$ is \'etale. Under the additional assumption that the morphism $p:X\rightarrow S$ is flat, a similar argument shows that the canonical morphism $\Hilb_{X/S}^{\et}\rightarrow S$ is a relative spectral algebraic space which is locally almost of finite presentation; moreover, it is \'etale by construction (here we use the fact that a morphism of spectral Deligne-Mumford stacks which is locally almost of finite presentation is \'etale if and only if its relative cotangent complex vanishes; see \cite[17.1.5.1]{lurie2018sag}) and is separated by reducing to its classical counterpart (see \cite[6.1]{MR0260746}). 
\end{remark}

\begin{pg}\label{degree of fibers}
	Our proof of \ref{a presentation for applying quasi-finite devissage} will make use of the notion of degree of fibers defined as follows: let $f:X\rightarrow Y$ be a representable flat, quasi-compact, separated, and locally quasi-finite morphism in $\Fun(\CAlg^{\cn},\widehat{\SSet})$, where $X$ and $Y$ satisfy condition $(\ast)$ of \ref{the induced topological space of quasi-geometric stacks}. Suppose we are given a point $\eta: \Spec \kappa \rightarrow Y$ which represents some $y\in |Y|$. The projection $\Spec \kappa \times_YX \rightarrow \Spec \kappa$, which can be identified with a morphism of ordinary schemes, is finite flat of degree $d$ for some $d\geq 0$; this integer does not depend on the choice of $\eta$. We therefore obtain a well-defined map $n_{X/Y}:|Y| \rightarrow \mathbb{Z}_{\geq 0}$ which carries $y\in |Y|$ to the degree of finite flat morphism $\Spec \kappa \times_YX \rightarrow \Spec \kappa$ determined by any point $\Spec \kappa \rightarrow Y$ representing $y$. 

We are now ready to prove an analogue of \cite[4.1]{MR3754421} in spectral algebraic geometry:
\end{pg}

\begin{theorem}\label{a presentation for applying quasi-finite devissage}
	Let $X:\CAlg^{\cn} \rightarrow \widehat{\SSet}$ be a quasi-geometric spectral algebraic stack which admits a quasi-finite presentation (see \emph{\ref{a quasi-finite presentation}}). Then there exist morphisms of quasi-geometric spectral algebraic stacks $p:W\rightarrow X$ and $q:\sfV \rightarrow W$ such that $p$ is a separated Nisnevich covering, $\sfV$ is a quasi-affine spectral Deligne-Mumford stack, and $q$ is representable finite, faithfully flat, and locally almost of finite presentation. 
\end{theorem}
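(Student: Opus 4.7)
The plan is to realize $W$ and $\sfV$ as outputs of the étale Hilbert functor applied to a quasi-finite presentation $f\colon \Spec A \to X$ supplied by \ref{a quasi-finite presentation}. In order to apply Theorem \ref{spectral Hilbert functor} and Remark \ref{the subfunctor of Hilbert functor classifying clopen immersions}, I first observe that $f$ is quasi-affine, and in particular separated: the graph $(\mathrm{id},f)\colon \Spec A \to \Spec A \times X$ is a base change of the diagonal $\Delta_X$, which is quasi-affine by quasi-geometricity of $X$, while the projection $\Spec A \times X \to X$ is affine; the composition $f$ is therefore quasi-affine.

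Next, form $H := \Hilb^{\et}_{\Spec A/X}$. Since $f$ is flat, Remark \ref{the subfunctor of Hilbert functor classifying clopen immersions} guarantees that $H \to X$ is a separated étale relative spectral algebraic space, locally almost of finite presentation. Let $i\colon \sfV \hookrightarrow \Spec A \times_X H$ denote the universal clopen immersion; by construction the projection $q\colon \sfV \to H$ is finite, flat, and locally almost of finite presentation, and after passing to the open-and-closed substack of $H$ where the universal clopen subscheme is non-empty we may further assume that $q$ is surjective. To see that $\sfV$ is quasi-affine, consider the composite $\sfV \hookrightarrow \Spec A \times_X H \to \Spec A$: it is the composition of the clopen immersion $i$ with a base change of the separated étale morphism $H \to X$, and is therefore a separated, quasi-finite, étale morphism from the spectral Deligne--Mumford stack $\sfV$ to the affine $\Spec A$. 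A spectral version of Zariski's Main Theorem then factors this morphism through a finite morphism to $\Spec A$, so $\sfV$ embeds as an open substack of an affine and is quasi-affine.

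It remains to extract a quasi-geometric $W \subseteq H$ so that $p\colon W \to X$ is a separated Nisnevich covering. For any point $\Spec \kappa \to X$ the fibre $\Spec A \times_X \Spec \kappa$ is a non-empty quasi-finite scheme over a field, hence automatically finite, and the corresponding clopen immersion defines a lift to $H(\kappa)$; consequently $|H| \to |X|$ is surjective. Stratifying $H$ by the degree function of \ref{degree of fibers} yields an open-and-closed decomposition $H = \bigsqcup_{d} H_d$. Using that $f$ is quasi-finite, flat, and locally almost of finite presentation, together with the quasi-compactness of $|X|$ (see \ref{basis of quasi-compact open subsets}), one checks that the fibrewise degree of $f$ is bounded, so only finitely many $H_d$ are needed to cover $|X|$; these determine a quasi-compact, quasi-geometric $W$. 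A descending induction on $d$, invoking \ref{flat and locally almost of finite presentation imply open map} to see that the image of each $H_d \to X$ is open and \ref{quasi-compact open subsets and quasi-geometric stacks} together with \ref{reduced closed substacks of quasi-geometric spectral algebraic stacks} to realise complements of images as quasi-geometric (reduced) substacks, then produces the filtration $\emptyset \simeq U_{n+1} \hookrightarrow \cdots \hookrightarrow U_0 \simeq X$ required by \ref{Nisnevich covering of quasi-geometric spectral algebraic stacks}.

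The main obstacle I foresee is this last step. Boundedness of the fibrewise degree of $f$ on the quasi-compact $|X|$ must be established in the spectral setting, and the inductive construction of the Nisnevich filtration in the stacky setting requires careful tracking of how the degree strata $H_d$ project to $X$ and of the reduced closed substacks separating consecutive terms of the filtration; this is the analogue in our context of the quasi-finite dévissage used in the proof of \cite[Theorem E]{MR3754421}.
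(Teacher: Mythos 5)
Your overall plan — realize $W$ inside $\Hilb^{\et}_{\Spec A/X}$ and take $\sfV$ to be the universal clopen subscheme — is the right framework and matches the paper. The construction of $H$, the separatedness and étaleness of $H \to X$, and the finite flat $q$ all agree with \ref{the subfunctor of Hilbert functor classifying clopen immersions}. However, the last step, which you rightly flag as the main worry, contains a genuine gap.

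The problem is that you try to derive the Nisnevich filtration of $X$ from the degree stratification $H = \bigsqcup_d H_d$ of the Hilbert space itself, taking the images of the $H_d$'s. But the image of $H_d \to X$ is the set of points $x$ whose fiber $\Spec A \times_X \Spec\kappa$ admits \emph{some} clopen subscheme of degree exactly $d$, i.e.\ a subset-sum condition on the residue degrees. This is not the same as the locus where the fiberwise degree of $f$ is $\geq d$. Concretely, if the fiber over some $x$ is $\Spec L$ with $[L:\kappa] = 3$, then $x$ lies in the image of $H_0$ and $H_3$ but not of $H_1$ or $H_2$; the images are not nested, and the complement of $\mathrm{im}(H_d)$ in $\mathrm{im}(H_{d-1})$ is not a reduced closed substack that factors through $H_d$. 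Also, the quasi-compactness of $\bigsqcup_{1\leq d\leq n}H_d$ (or even of an individual $H_d$) over $X$ is not established and is not what the paper uses.

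The paper avoids this by stratifying the base $X$, not $H$: it pulls back $f$ along a fiber smooth affine cover $g:\sfY \to X$, invokes the classical degree-stratification for quasi-affine, quasi-finite, flat, locally finitely presented morphisms of algebraic spaces to get a finite chain of quasi-compact opens $\sfV_{n+1} \hookrightarrow \cdots \hookrightarrow \sfV_0 = \sfY$ with $|\sfV_i| = \{y : n_{\sfY'/\sfY}(y) \geq i\}$, and pushes this down along $|g|$ using \ref{flat and locally almost of finite presentation imply open map} and \ref{quasi-compact open subsets and quasi-geometric stacks} to obtain $U_{n+1} \hookrightarrow \cdots \hookrightarrow U_0 = X$. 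On the reduced closed stratum $K'_i$ the pullback of $\Spec A \to X$ is finite flat of degree $i$, so the identity on $K'_i \times_X \Spec A$ furnishes a tautological lift $K'_i \to \Hilb^{\et}_{\Spec A/X}$. One then chooses a quasi-compact open $\calW' \subseteq |\Hilb^{\et}_{\Spec A/X}|$ containing the images of all the $|K'_i|$, takes $W'$ accordingly, checks $W'$ is quasi-geometric with $p'$ quasi-affine separated étale, shrinks to $W$ by restricting to the image of the finite flat projection from the universal clopen subscheme, and finally observes that each $K'_i$ factors through $W$ because $f$ is surjective. You would need to replace your ``descending induction on $d$'' by this degree stratification of $X$ and the tautological sections over the $K'_i$'s; the images of the $H_d$'s alone cannot supply the required filtration.

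As a smaller point, your quasi-affineness argument for $\sfV$ via Zariski's Main Theorem is plausible but takes a detour; the paper instead gets it for free from the quasi-affineness of $p'$ (which in turn follows from $W'$ being a quasi-compact open of $H$ with quasi-affine composite to $X$) together with the clopen immersion $\sfV \hookrightarrow W' \times_X \Spec A$.
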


\begin{proof}
	Using our assumption that $X$ admits a quasi-finite presentation, we can choose a connective $\bbE_\infty$-ring $A$ and a morphism $f:\Spec A \rightarrow X$ which is locally quasi-finite, faithfully flat, and locally almost of finite presentation. Choose a fiber smooth surjection $g:\sfY\rightarrow X$, where $\sfY$ is an affine spectral Deligne-Mumford stack. Consider a pullback square of quasi-geometric spectral algebraic stacks
$$
\Pull{\sfY'}{\Spec A}{\sfY}{X.}{g'}{f'}{f}{g}
$$
The underlying morphism $\tau_{\leq 0}f':\tau_{\leq 0}\sfY'\rightarrow \tau_{\leq 0}\sfY$ of ordinary of algebraic spaces is quasi-affine, locally quasi-finite, faithfully flat, and locally of finite presentation. Combining \cite[\href{https://stacks.math.columbia.edu/tag/07RZ}{Tag 07RZ}]{stacks-project} with \cite[\href{https://stacks.math.columbia.edu/tag/03JA}{Tag 03JA}]{stacks-project}, we deduce that there exists a sequence of quasi-compact open immersions of spectral algebraic spaces 
$$
\emptyset\simeq \sfV_{n+1} \hookrightarrow \cdots \hookrightarrow \sfV_0 \simeq \sfY
$$
with the following properties:
\begin{enumerate}[(i)]
\item For every $0\leq i\leq n+1$, we have $|\sfV_i|=\{y\in |Y|: n_{Y'/Y}(y)\geq i \}$ (see \ref{degree of fibers}). 
\item For each $0\leq i\leq n$, let $\sfK_i$ denote the reduced closed substack of $\sfV_i$ complementary to $\sfV_{i+1}$ (see \cite[3.1.6.3]{lurie2018sag}). Then the projection $\sfK_i\times_{\sfY}\sfY'\rightarrow \sfK_i$ is finite flat of degree $i$. 
\end{enumerate}
Since $g$ is flat and locally almost of finite presentation, \ref{flat and locally almost of finite presentation imply open map} guarantees that for each $i$, the image of $|\sfV_i|$ under $|g|$ is quasi-compact open, and therefore gives rise to an open immersion $U_i\rightarrow X$ of quasi-geometric spectral algebraic stacks (see \ref{quasi-compact open subsets and quasi-geometric stacks}). We claim that the sequence of open immersions of quasi-geometric spectral algebraic stacks 
$$
\emptyset \simeq U_{n+1} \hookrightarrow \cdots \hookrightarrow U_0 \simeq X
$$
gives a stacky scallop decomposition of $X$. Let $K'_i$ denote the reduced closed substack of $U_i$ complementary to $U_{i+1}$; see \ref{reduced closed substacks of quasi-geometric spectral algebraic stacks}. Using the description of $|\sfV_i|$, we see that the canonical morphism $\sfV_i \rightarrow U_i\times_X\sfY$ is an equivalence. In particular, the induced morphism $\sfK_i\rightarrow K'_i$ can be identified with a pullback of $\sfV_i\rightarrow U_i$, and therefore is a flat covering of \cite[2.8.3.1]{lurie2018sag}. It then follows from \cite[5.2.3.5]{lurie2018sag} that the projection $K'_i\times_X\Spec A\rightarrow K'_i$ is finite flat of degree $i$, so that the identity morphism on $K'_i\times_X\Spec A$ induces a factorization of the immersion $K'_i\rightarrow X$ through the subfunctor $\Hilb_{\Spec A/X}^{\et} \subseteq \Hilb_{\Spec A/X}$ of \ref{the subfunctor of Hilbert functor classifying clopen immersions}. 
Using \ref{basis of quasi-compact open subsets}, we can choose a quasi-compact open subset $\calW' \subseteq|\Hilb_{\Spec A/X}^{\et}|$ which contains the image of $|K'_i|$ in $|\Hilb_{\Spec A/X}^{\et}|$ for all $i$. Let $j:W'\rightarrow \Hilb_{\Spec A/X}^{\et}$ be a representable open immersion such that $|j||W'|=\calW'$ (see the proof of \ref{quasi-compact open subsets and quasi-geometric stacks}). Let $p'$ denote the composition $W' \rightarrow \Hilb_{\Spec A/X}^{\et} \rightarrow X$. Note that \ref{the subfunctor of Hilbert functor classifying clopen immersions} guarantees that $p'$ is representable. Combining this observation with \cite[6.3.3.8]{lurie2018sag}, we see that $W'$ is a sheaf for the fpqc topology. Since the projection $\sfY\times_XW'\rightarrow W'$ is a fiber smooth surjection and $|W'|$ is quasi-compact, \ref{basis of quasi-compact open subsets} (and its proof) supplies a fiber smooth surjection $\pi:\Spec B \rightarrow W'$, where $B$ is a connective $\bbE_\infty$-ring. Note that $\pi$ is quasi-affine because $p'$ is separated and the composition $p' \circ\pi$ is quasi-affine (here we use the fact that the diagonal of $X$ is quasi-affine), so that the diagonal of $W'$ is representable quasi-affine by virtue of \cite[9.1.1.2]{lurie2018sag}. Consequently, we conclude that $W'$ is a quasi-geometric spectral algebraic stack. In particular, $p'$ is quasi-compact. Combining this observation with the fact that $p'$ is \'etale and separated (see \ref{the subfunctor of Hilbert functor classifying clopen immersions}), we deduce that it is quasi-affine by virtue of \cite[3.3.0.2]{lurie2018sag}. Let $i:V\rightarrow W'\times_X\Spec A$ denote the clopen immersion of quasi-geometric spectral algebraic stacks (see \cite[3.1.7.2]{lurie2018sag}) determined by the inclusion $W'\rightarrow \Hilb_{\Spec A/X}^{\et}$. Let $q'$ denote the composition of $i$ with the projection $W'\times_X\Spec A \rightarrow W'$. Since $q'$ is flat and locally almost of finite presentation, \ref{flat and locally almost of finite presentation imply open map} guarantees that the image of $|V|$ under $|q'|$ is quasi-compact open, and therefore induces an open immersion $W\rightarrow W'$ of quasi-geometric spectral algebraic stacks by virtue of \ref{quasi-compact open subsets and quasi-geometric stacks}. Shrinking $|W'|$ to the image of $|q'|$, we obtain a surjection $q:V\rightarrow W$ of quasi-geometric spectral algebraic stacks. Combining the fact that $i$ is clopen immersion with \cite[21.4.6.4]{lurie2018sag} and \cite[2.5.7.4]{lurie2018sag}, we observe that $q$ is flat, locally almost of finite presentation, and quasi-affine. Since $q'$ is proper, $q$ is also proper, hence finite by virtue of \cite[5.2.1.1]{lurie2018sag}. Invoking quasi-affineness of $p'$, we conclude that $V$ is representable by a quasi-affine Deligne-Mumford stack. Let $p$ denote the composition of the inclusion $W\subseteq W'$ with $p'$. We will complete the proof by showing that $p$ is a separated Nisnevich covering. Since each $K'_i$ factors through $W'$, it will suffice to show that the image of $|K'_i|$ in $|W'|$ is contained in $|q'||V|$. By construction, the projection $K_i'\times_X\Spec A \rightarrow K_i'$ is a pullback of $q'$, so the desired result follows by combining this observation with the fact that $f$ is surjective.
\end{proof}

\section{Twisted Compact Generations}\label{Sec: Twisted Compact Generations}
	In this section, we prove that quasi-geometric spectral algebraic stacks which admit a quasi-finite presentation are of twisted compact generation. This result will play a central role in our proof of \ref{Brauer spaces and Azumaya algebras for quasi-geometric spectral algebraic stacks}.

\begin{pg}\label{compact generators}
	To formulate the main definition of interest to us in this section (that is, \ref{twisted compact generation}), we recall a bit of terminology. According to \cite[5.5.7.1]{MR2522659}, an $\infty$-category $\calC$ is \emph{compactly generated} if it is presentable and $\omega$-accessible, or equivalently if the inclusion $\Ind(\calC^\omega) \rightarrow \calC$ is an equivalence of $\infty$-categories, where $\calC^\omega \subseteq \calC$ is the full subcategory spanned by the compact objects of $\calC$ (here $\Ind(\calC^\omega)$ denotes the $\infty$-category of Ind-objects of $\calC^\omega$; see \cite[5.3.5.1]{MR2522659}). 

Now let $\calC$ be a presentable stable $\infty$-category, and let $\{C_i\}_{i \in I}$ be a collection of compact objects of $\calC$. We say that the collection $\{C_i\}$ is a \emph{set of compact generators} for $\calC$ if it satisfies the following condition: an object $C\in \calC$ is equivalent to $0$ if the graded abelian group $\Ext^\ast_{\calC}(C_i, C)$ is zero for all $i\in I$. Note that if $\calC$ is compactly generated, the collection of compact objects of $\calC$ forms a set of compact generators. 
\end{pg}

\begin{pg}\label{adjunction and a set of compact generators}
	Suppose we are given an adjunction $\Adjoint{L}{\calC}{\calD}{R}$ between presentable $\infty$-categories, where the right adjoint $R$ is conservative and preserves small filtered colimits. It follows from \cite[6.2]{lurie2011dagxi} that if $\calC$ is compactly generated, then so is $\calD$. Note that in the special case where $\calC$ and $\calD$ are presentable stable $\infty$-categories, if $\{C_i\}_{i\in I}$ is a set of compact generators for $\calC$, then $\{L(C_i)\}$ is a set of compact generators for $\calD$. 
\end{pg}

\begin{pg}\label{unbounded derived category}
	Let $X:\CAlg^{\cn}\rightarrow \widehat{\SSet}$ be a functor and let $\QCoh(X)$ denote the \emph{$\infty$-category of quasi-coherent sheaves on $X$} of \cite[6.2.2.1]{lurie2018sag}. More informally, we can think of an object $F\in \QCoh(X)$ as a rule which assigns to each connective $\bbE_\infty$-ring $R$ and each point $\eta\in X(R)$ an $R$-module $F_\eta\in \Mod_R$, which depends functorially on $R$ and $\eta$ (see \cite[6.2.2.7]{lurie2018sag}). According to \cite[6.2.6]{lurie2018sag}, the $\infty$-category $\QCoh(X)$ can be equipped with a symmetric monoidal structure, where the tensor product is given informally by the formula $(F\otimes F')_\eta \simeq F_\eta \otimes_R F'_\eta$ for each point $\eta\in X(R)$; let $\calO_X$ denote the unit object of $\QCoh(X)$. In the special case where $X$ is representable by an ordinary Deligne-Mumford stack $(\calX, \calO)$, let $\mathrm{D}(X)$ denote the derived $\infty$-category of the Grothendieck abelian category $\Mod_{\calO}$ of $\calO$-modules; see \cite[1.3.5.8]{lurie2017ha}. It then follows from \cite[2.2.6.2]{lurie2018sag} that there is a canonical equivalence $\QCoh(X)\simeq \mathrm{D}_{\mathrm{qc}}(\calX)$, where $\mathrm{D}_{\mathrm{qc}}(\calX) \subseteq \mathrm{D}(X)$ is the full subcategory spanned by those chain complexes of $\calO$-modules whose homologies are quasi-coherent.
\end{pg}

\begin{remark}\label{quasi-coherent sheaves in DAG}
	If $X':\CAlg^\Delta_R\rightarrow \widehat{\SSet}$ is a functor, we define the $\infty$-category $\QCoh(X')$ of quasi-coherent sheaves on $X'$ to be the $\infty$-category $\QCoh({\Theta_R}_!X')$ (here we regard ${\Theta_R}_!X'$ as an object of the slice $\infty$-category $\Fun(\CAlg^{\cn}, \widehat{\SSet})_{/R}$). A slight variant of \cite[6.2.3.1]{lurie2018sag} (using the fiber smooth topology in place of the fpqc topology) guarantees that for every quasi-geometric derived algebraic stack $X$ over $R$, the canonical map $\QCoh(X^\circ)\rightarrow \QCoh(X)$ is an equivalence of $\infty$-categories (here $X^\circ$ is regarded as an object of $\Fun(\CAlg^{\cn}, \widehat{\SSet})_{/R}$). 
\end{remark}

\begin{pg}\label{the infinity category of prestable quasi-coherent stacks}
	In \cite{lurie2018sag}, Lurie develops the theory of quasi-coherent stacks, which plays an analogous role of the categories of twisted sheaves in the setting of spectral algebraic geometry. In this analogy, the $\infty$-category of global sections of quasi-coherent stacks is an analogue of the derived category of twisted sheaves. We now give a quick review of some basic definitions and notations. Let $\LinCat^{\PSt}$ denote the $\infty$-category whose objects are pairs $(R, \calC)$, where $R$ is a connective $\bbE_\infty$-ring and $\calC$ is a prestable $R$-linear $\infty$-category of \cite[D.1.4.1]{lurie2018sag}. Let $\QStk^{\PSt}: \Fun(\CAlg^{\cn}, \widehat{\SSet})^{\op} \rightarrow \widehat{\Cat}_\infty$ denote the functor obtained by applying \cite[6.2.1.11]{lurie2018sag} to the projection $q: \LinCat^{\PSt}\rightarrow \CAlg^{\cn}$. Let $X:\CAlg^{\cn}\rightarrow \widehat{\SSet}$ be a functor. We refer to $\QStk^{\PSt}(X)$ as the \emph{$\infty$-category of prestable quasi-coherent stacks on $X$}; see \cite[10.1.2.4]{lurie2018sag}. More informally, an object $\calC\in \QStk^{\PSt}(X)$ is a rule which assigns to each connective $\bbE_\infty$-ring $R$ and each point $\eta\in X(R)$ a prestable $R$-linear $\infty$-category $\calC_\eta$, depending functorially on the pair $(R,\eta)$ (see \cite[10.1.1.3]{lurie2018sag} for more details). 
\end{pg}

\begin{pg}
	Let $\QStk^{\PSt}_R: \Fun(\CAlg^{\cn}_R, \widehat{\SSet})^{\op} \rightarrow \widehat{\Cat}_\infty$ denote the functor obtained by applying \cite[6.2.1.11]{lurie2018sag} to the projection $q_R:\CAlg^{\cn}_R\times_{\CAlg^{\cn}} \LinCat^{\PSt}\rightarrow \CAlg^{\cn}_R$. Note that if $X$ is an image of $X'$ under the equivalence $\Fun(\CAlg^{\cn}_R, \widehat{\SSet}) \rightarrow \Fun(\CAlg^{\cn}, \widehat{\SSet})_{/R}$ of \ref{quasi-geometric spectral algebraic stacks over any base}, the canonical map $\QStk^{\PSt}(X)\rightarrow \QStk^{\PSt}_R(X')$ is an equivalence of $\infty$-categories by construction. 
\end{pg}

\begin{remark}\label{quasi-coherent stacks in DAG}
	Let ${\QStk'}^{\PSt}_R: \Fun(\CAlg^{\Delta}_R, \widehat{\SSet})^{\op} \rightarrow \widehat{\Cat}_\infty$ denote the functor obtained by applying \cite[6.2.1.11]{lurie2018sag} to the projection $q'_R:\CAlg^{\Delta}_R\times_{\CAlg^{\cn}} \LinCat^{\PSt}\rightarrow \CAlg^{\Delta}_R$. By construction, the canonical map $\QStk^{\PSt}_R\circ \Theta_{R!}^{\op} \rightarrow {\QStk'}^{\PSt}_R$ is an equivalence of functors. According to \cite[D.4.1.6]{lurie2018sag}, the functor $\chi_R: \CAlg^{\cn}_R\rightarrow \widehat{\Cat}_\infty$ classifying the coCartesian fibration $q_R$ is a sheaf for the flat universal descent topology of \cite[D.4.1.4]{lurie2018sag}, and therefore also a sheaf for the fiber smooth topology of \ref{fiber smooth topology} by virtue of \cite[11.2.3.3]{lurie2018sag}. Using \cite[1.3.1.7]{lurie2018sag}, we deduce that the canonical map $\QStk^{\PSt}_R\circ L_{\fsm}^{\op} \rightarrow \QStk^{\PSt}_R$ is an equivalence of functors from $\Fun(\CAlg^{\cn}_R, \widehat{\SSet})^{\op}$ to $\widehat{\Cat}_\infty$. We conclude that for each quasi-geometric derived algebraic stack $X$ over $R$, the canonical map $\QStk^{\PSt}_R(X^\circ) \rightarrow {\QStk'}^{\PSt}_R(X)$ is an equivalence of $\infty$-categories. 
\end{remark}

\begin{remark}\label{the necessity of fiber smooth topology}
	Our choice of the fiber smooth topology over the fpqc topology for the underlying quasi-geometric spectral algebraic stacks of quasi-geometric derived algebraic stacks (see \ref{quasi-geometric spectral algebraic stacks from DAG}) is motivated by \ref{quasi-coherent stacks in DAG}. Indeed, we do not know if the functor $\chi_R$ satisfies fpqc descent. 
\end{remark}

\begin{pg}\label{global section functor for quasi-geometric stacks}
	Let $X$ be a quasi-geometric stack which satisfies the following condition: 
\begin{itemize}
\item[$(\ast)$] There exists a morphism of quasi-geometric stacks $\sfX_0 \rightarrow X$ which is faithfully flat and locally almost of finite presentation, where $\sfX_0$ is affine.
\end{itemize}
Let $\Groth_\infty$ denote the $\infty$-category of Grothendieck prestable $\infty$-categories (see \cite[C.3.0.5]{lurie2018sag}). Let $S$ denote the sphere spectrum and let $q: X \rightarrow \Spec S$ be the projection. It follows from \cite[10.4.1.1]{lurie2018sag} that the pullback functor $q^\ast: \Groth_\infty \rightarrow \QStk^{\PSt}(X)$ induced by $q$ (note that there is a canonical equivalence $\QStk^{\PSt}(\Spec S)\simeq  \Groth_\infty$) admits a right adjoint 
$$
\QCoh(X; \bullet):\QStk^{\PSt}(X) \rightarrow \Groth_\infty
$$ 
which we refer to as the \emph{global section functor on $X$}. For each prestable quasi-coherent stack $\calC$ on $X$, we refer to $\QCoh(X; \calC)$ as the \emph{$\infty$-category of global sections} of $\calC$.
\end{pg}

\begin{remark}\label{a description of global sections of quasi-coherent stacks}
	Let $f_0:Y_0 \rightarrow X$ be a morphism between quasi-geometric stacks satisfying condition $(\ast)$ of \ref{global section functor for quasi-geometric stacks}. Assume that $f_0$ is representable faithfully flat and locally almost of finite presentation. Applying the argument of \cite[10.4.1.4]{lurie2018sag} to $f_0$, we deduce that the canonical morphism $\QCoh(X; \calC)\rightarrow \lim\limits_{[n]\in \Delta} \QCoh(Y_n; f_n^\ast \calC)$ is an equivalence in $\Groth_\infty$ (here $Y_\bullet$ denotes the \Cech nerve of $f$ and each $f_n:Y_n \rightarrow X$ denotes the projection). 
\end{remark}

\begin{remark}\label{stability of global section}
	Let $\Delta_{s}$ denote the subcategory of $\Delta$ having the same objects but the morphisms are given by \emph{injective} order-preserving maps (see \cite[6.5.3.6]{MR2522659}). Let $\Groth_\infty^{\lex}$ denote the subcategory of $\widehat{\Cat}_\infty$ whose objects are Grothendieck prestable $\infty$-categories and whose morphisms are functors preserving small colimits and finite limits; see \cite[C.3.2.3]{lurie2018sag}. By virtue of \cite[C.3.2.4]{lurie2018sag}, $\Groth_\infty^{\lex}$ admits small limits and the inclusion $\Groth_\infty^{\lex}\subseteq \Groth_\infty$ preserves small limits. In the situation of \ref{a description of global sections of quasi-coherent stacks}, the existence of the limit $\lim\limits_{[n]\in \Delta} \QCoh(Y_n; f_n^\ast \calC)$ is supplied by the right cofinality of the inclusion $\Delta_{s}\subseteq \Delta$ (see \cite[6.5.3.7]{MR2522659}) and our assumption that $f_0$ is flat. Indeed, the limit is given by $\lim\limits_{[n]\in \Delta_{s}} \QCoh(Y_n; f_n^\ast \calC)$, where the flatness assumption guarantees that the construction $[n] \mapsto \QCoh(Y_n; f_n^\ast \calC)$ determines a functor $\Delta_{s}\rightarrow \Groth_\infty^{\lex}$ (cf. \cite[10.1.7.10]{lurie2018sag}). Let $\LPres$ denote the subcategory of $\widehat{\Cat}_\infty$ whose objects are presentable $\infty$-categories and whose morphisms are functors which preserve small colimits; see \cite[5.5.3.1]{MR2522659}. In the special case where $\calC$ is stable, combining \cite[10.3.1.8]{lurie2018sag} with \cite[4.8.2.18]{lurie2017ha} (which asserts that the full subcategory $\Pr^{\St}\subseteq \LPres$ spanned by the presentable stable $\infty$-categories is closed under small limits), we deduce that $\QCoh(X; \calC)$ is stable.
\end{remark}

\begin{pg}\label{the full subcategory spanned by objects with prescribed support}
	Let $f:X\rightarrow Y$ be a representable morphism between quasi-geometric stacks satisfying condition $(\ast)$ of \ref{global section functor for quasi-geometric stacks}. It follows from \cite[10.1.4.1]{lurie2018sag} that the pullback functor $f^\ast: \QStk^{\PSt}(Y)\rightarrow  \QStk^{\PSt}(X)$ admits a right adjoint $f_\ast$. Let $\calC$ be a prestable quasi-coherent stack on $Y$. Applying the global section functor on $Y$ (see \ref{global section functor for quasi-geometric stacks}) to the unit morphism $\calC\rightarrow f_\ast f^\ast \calC$, we obtain a functor $\QCoh(Y;\calC)\rightarrow \QCoh(X; f^\ast \calC)$ which we refer to as the \emph{pullback along $f$} and denote by $f^\ast$ (see \cite[10.1.7.5]{lurie2018sag}). 

Let $j:U\rightarrow X$ be an open immersion of quasi-geometric stacks satisfying condition $(\ast)$ of \ref{global section functor for quasi-geometric stacks}. For each object $\calC\in \QStk^{\PSt}(X)$, we let $\QCoh_{X-U}(X; \calC)\subseteq \QCoh(X; \calC)$ denote the full subcategory spanned by those objects $M$ such that $j^\ast M \in \QCoh(U; j^\ast \calC)$ is equivalent to $0$. 
\end{pg}

\begin{pg}\label{the unit object of QStk}
	Let $X$ be a quasi-geometric stack satisfying condition $(\ast)$ of \ref{global section functor for quasi-geometric stacks}. Let $\QStk^{\St}(X) \subseteq \QStk^{\PSt}(X)$ denote the full subcategory spanned by the stable quasi-coherent stacks (see \cite[10.1.2.1]{lurie2018sag}). Let $\calQ_X$ denote the unit object of $\QStk^{\St}(X)$ (with respect to the symmetric monoidal structure described in \cite[10.1.6.4]{lurie2018sag}). More informally, it assigns to each point $\eta\in X(R)$ the stable $R$-linear $\infty$-category $\Mod_R$. Note that $\calQ_X$ is compactly generated (see \cite[7.2.4.2]{lurie2017ha}) and that there is a canonical equivalence of $\infty$-categories $\QCoh(X; \calQ_X)\simeq \QCoh(X)$. More generally, let $j:U\rightarrow X$ be a representable open immersion. Repeating the argument of \cite[10.1.7.3]{lurie2018sag}, we obtain a stable quasi-coherent stack $\calQ_{X-U}$ on $X$, which is given informally by the formula $(\calQ_{X-U})_\eta=\Mod_R^{\mathrm{Nil}(I_\eta)}$ for each point $\eta\in X(R)$ (here $I_\eta\subseteq \pi_0R$ is a finitely generated ideal whose vanishing locus is complementary to the open subset $|\Spec R\times_XU| \subseteq |\Spec R|$ and $\Mod_R^{\mathrm{Nil}(I_\eta)}\subseteq \Mod_R$ denotes the full subcategory spanned by the $I_\eta$-nilpotent objects of \cite[7.1.1.6]{lurie2018sag}). For each stable quasi-coherent stack $\calC$ on $X$, let $\calC_{X-U}$ denote the tensor product $\calC\otimes \calQ_{X-U}$ in the symmetric monoidal $\infty$-category $\QStk^{\St}(X)$. For an alternative description, let $\eta\in X(R)$ be a point and let $(\calC_\eta)^{\mathrm{Nil}(I_\eta)}\subseteq \calC_\eta$ denote the full subcategory spanned by the $I_\eta$-nilpotent objects. It then follows from \cite[7.1.2.11]{lurie2018sag} (and its proof) that $\calC_{X-U}$ is equivalent to the stable quasi-coherent stack determined by the construction $(\eta\in X(R))\mapsto (\calC_\eta)^{\mathrm{Nil}(I_\eta)}$; in particular, \cite[7.1.1.12]{lurie2018sag} guarantees that if $\calC$ is compactly generated, then so is $\calC_{X-U}$. We have the following observation:
\end{pg}

\begin{lemma}\label{quasi-coherent stacks and support}
	Let $j:U\rightarrow X$ be a representable open immersion of quasi-geometric stacks satisfying condition $(\ast)$ of \emph{\ref{global section functor for quasi-geometric stacks}}. Let $\calC$ be a stable quasi-coherent stack on $X$ and let $\calC_{X-U}$ be as in \emph{\ref{the unit object of QStk}}. Then the canonical morphism $\QCoh(X; \calC_{X-U})\rightarrow \QCoh(X; \calC)$ induces an equivalence of $\infty$-categories $\QCoh(X; \calC_{X-U})\rightarrow \QCoh_{X-U}(X; \calC)$ (see \emph{\ref{the full subcategory spanned by objects with prescribed support}}).
\end{lemma}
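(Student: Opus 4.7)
The plan is to identify both $\QCoh(X;\calC_{X-U})$ and $\QCoh_{X-U}(X;\calC)$ with the fiber of the pullback functor $j^\ast\colon\QCoh(X;\calC)\to\QCoh(U;j^\ast\calC)$, taken in the $\infty$-category of presentable stable $\infty$-categories (recall that both global section $\infty$-categories are stable by \ref{stability of global section}). The identification $\QCoh_{X-U}(X;\calC)\simeq\fib(j^\ast)$ is immediate from \ref{the full subcategory spanned by objects with prescribed support}, so the real content is in matching $\QCoh(X;\calC_{X-U})$ with this fiber.

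I would first exhibit, in $\QStk^{\St}(X)$, a fiber sequence
$$\calC_{X-U}\longrightarrow\calC\longrightarrow j_\ast j^\ast\calC.$$
Since limits in $\QStk^{\St}(X)$ are computed pointwise, it suffices to produce this sequence at each $\eta\in X(R)$. For such $\eta$, let $I_\eta\subseteq\pi_0 R$ be the finitely generated ideal whose vanishing locus is complementary to $|\Spec R\times_X U|$. Combining \cite[7.1.1.12]{lurie2018sag} with \cite[7.1.2.11]{lurie2018sag}, there is a fiber sequence in $\Pr^{\St}$
$$(\calC_\eta)^{\mathrm{Nil}(I_\eta)}\hookrightarrow\calC_\eta\longrightarrow\calC_\eta\otimes_{\Mod_R}\Mod_R^{\mathrm{Loc}(I_\eta)},$$
in which the first term is the $I_\eta$-nilpotent full subcategory and the last map is the Bousfield localization killing it. Identifying $\Mod_R^{\mathrm{Loc}(I_\eta)}\simeq\QCoh(\Spec R\times_X U)$, a base change formula for $j_\ast$ along $\eta$ (reducible to the affine case by the argument of \cite[10.1.7.3]{lurie2018sag}) matches the third term with $(j_\ast j^\ast\calC)_\eta$.

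Next, I would apply $\QCoh(X;-)\colon\QStk^{\PSt}(X)\to\Groth_\infty$, a right adjoint by \ref{global section functor for quasi-geometric stacks}, which therefore preserves fibers, to obtain a fiber sequence
$$\QCoh(X;\calC_{X-U})\longrightarrow\QCoh(X;\calC)\longrightarrow\QCoh(X;j_\ast j^\ast\calC).$$
Composing the adjunctions $j^\ast\dashv j_\ast$ on $\QStk^{\PSt}$ with $q_X^\ast\dashv\QCoh(X;-)$, where $q_X\colon X\to\Spec\mathbb{S}$ is the structure morphism, and using $q_U=q_X\circ j$, I would identify $\QCoh(X;j_\ast(-))\simeq\QCoh(U;-)$ by uniqueness of right adjoints, under which the right-hand morphism of the displayed sequence becomes the pullback $j^\ast$. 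Taking fibers then yields the desired equivalence $\QCoh(X;\calC_{X-U})\simeq\QCoh_{X-U}(X;\calC)$. The main obstacle is the pointwise base change identification $(j_\ast j^\ast\calC)_\eta\simeq\calC_\eta\otimes_{\Mod_R}\QCoh(\Spec R\times_X U)$ for representable open immersions of quasi-geometric stacks, which is not stated in this form in \cite{lurie2018sag} for our non-Deligne--Mumford setting; I expect to establish it by flat descent from the affine case, combined with the symmetric monoidal structure on $\QStk^{\St}(X)$ of \cite[10.1.6.4]{lurie2018sag}, mirroring the argument of \cite[10.1.7.3]{lurie2018sag}.
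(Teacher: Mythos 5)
Your approach is genuinely different from the paper's, and the comparison is instructive. The paper's proof is a short reduction: by \ref{a description of global sections of quasi-coherent stacks}, the global sections on both sides are computed as a totalization over the \v{C}ech nerve of a flat affine cover; this reduces the claim to quasi-geometric spectral Deligne-Mumford stacks, then (via the proof of \cite[10.1.4.1]{lurie2018sag}) to the affine case, where the identification $\QCoh(\Spec R;\calC_{X-U})=(\calC_\eta)^{\mathrm{Nil}(I)}\simeq\QCoh_{X-U}(\Spec R;\calC)$ is literally the definition in \cite[7.1.1.6]{lurie2018sag}. No description of $j_\ast$ is ever needed. You instead propose to build a fiber sequence $\calC_{X-U}\to\calC\to j_\ast j^\ast\calC$ in $\QStk^{\St}(X)$ and then apply $\QCoh(X;-)$. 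The second half of your argument — the adjunction gymnastics identifying $\QCoh(X;j_\ast(-))\simeq\QCoh(U;-)$ and matching the map with $j^\ast$, hence the fiber with $\QCoh_{X-U}(X;\calC)$ — is correct and clean.

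The genuine gap is exactly the step you flag: establishing that $\calC_{X-U}\to\calC\to j_\ast j^\ast\calC$ is a fiber sequence in $\QStk^{\St}(X)$ requires knowing $(j_\ast j^\ast\calC)_\eta$ pointwise, i.e., a base change formula $(j_\ast j^\ast\calC)_\eta\simeq\calC_\eta\otimes_{\Mod_R}\Mod_R^{\mathrm{Loc}(I_\eta)}$. For quasi-geometric stacks that are not Deligne-Mumford, $j_\ast$ is an abstract right Kan extension (cf.~\cite[10.1.4.1]{lurie2018sag}), and the paper nowhere proves (nor needs) this pointwise description; the quasi-affine base change results the paper does extend (\ref{pullback square and right adjointability}, \ref{quasi-affine base change in the stable case}) address the global section functors, not the pointwise value of $j_\ast$ on $\QStk$. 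Your proposed fix, ``flat descent from the affine case mirroring \cite[10.1.7.3]{lurie2018sag},'' is plausible but is essentially the \v{C}ech reduction of the paper's proof redirected at a harder target: to verify the pointwise formula for $j_\ast j^\ast\calC$ you would compare global sections along a flat affine hypercover, which is the same descent step the paper uses to prove the lemma directly. So the detour through $j_\ast$ costs you the base change lemma without saving the descent argument. There is also a minor point worth checking: you pass between fibers in $\QStk^{\St}(X)$ and $\QStk^{\PSt}(X)$ and then apply the $\Groth_\infty$-valued right adjoint $\QCoh(X;-)$, so you should verify that the fiber of a morphism of stable quasi-coherent stacks computed in $\QStk^{\PSt}(X)$ is again stable and agrees with the fiber in $\QStk^{\St}(X)$; this is true but should be said.
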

\begin{proof}
	By virtue of \ref{a description of global sections of quasi-coherent stacks}, we are reduced to the case where $X$ is a quasi-geometric spectral Deligne-Mumford stack. Using the proof of \cite[10.1.4.1]{lurie2018sag}, we can reduce further to the case where $X$ is affine, in which case the desired result follows immediately from the definition of the $\infty$-category $\calC^{\mathrm{Nil}(I)}$ appearing in \cite[7.1.1.6]{lurie2018sag}.
\end{proof}

\begin{pg}
	The following pair of results asserts that if a quasi-geometric spectral algebraic stack is of twisted compact generation, then it satisfies a spectral analogue of the $\beta$-Thomason condition of \cite[8.1]{MR3705292} for some regular cardinal $\beta$ (see \ref{beta-Thomason condition}):
\end{pg}

\begin{lemma}\label{twisted compact generation and global sections}
	Let $X$ be a quasi-geometric spectral algebraic stack which is of twisted compact generation. Then for each representable open immersion $j:U\rightarrow X$ of quasi-geometric spectral algebraic stacks, the $\infty$-category $\QCoh_{X-U}(X)$ is compactly generated. In particular, $\QCoh(X)$ is compactly generated. 
\end{lemma}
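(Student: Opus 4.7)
The plan is to apply the hypothesis of twisted compact generation to a carefully chosen stable quasi-coherent stack on $X$ and then transfer compact generation across the equivalence provided by \ref{quasi-coherent stacks and support}. Concretely, start with the unit object $\calQ_X \in \QStk^{\St}(X)$ described in \ref{the unit object of QStk}, which is compactly generated and satisfies $\QCoh(X;\calQ_X)\simeq \QCoh(X)$. Given a representable open immersion $j:U\to X$, form the stable quasi-coherent stack $\calQ_{X-U}$ (equivalently, $\calQ_X \otimes \calQ_{X-U}$), whose value at a point $\eta\in X(R)$ is the full subcategory $\Mod_R^{\mathrm{Nil}(I_\eta)}\subseteq \Mod_R$ of $I_\eta$-nilpotent modules. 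As recorded in \ref{the unit object of QStk}, the cited result \cite[7.1.1.12]{lurie2018sag} guarantees that $\calQ_{X-U}$ is itself compactly generated.

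Now apply the hypothesis that $X$ is of twisted compact generation to the compactly generated stable quasi-coherent stack $\calQ_{X-U}$: this immediately yields that $\QCoh(X;\calQ_{X-U})$ is compactly generated. Finally, invoke \ref{quasi-coherent stacks and support} to identify $\QCoh(X;\calQ_{X-U})$ with the full subcategory $\QCoh_{X-U}(X;\calQ_X)=\QCoh_{X-U}(X)$, which therefore inherits compact generation.

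For the last sentence, one may simply specialize to $U=\emptyset$, in which case the vanishing condition $j^\ast M \simeq 0$ on the empty open substack is vacuous, so $\QCoh_{X-U}(X)=\QCoh(X)$; alternatively, apply the hypothesis directly to $\calC=\calQ_X$ and use the equivalence $\QCoh(X;\calQ_X)\simeq \QCoh(X)$. Either way produces the compact generation of $\QCoh(X)$.

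There is no genuine obstacle here: all substantive work has been absorbed into the earlier statements. The only point that requires a moment of care is confirming that $\calQ_{X-U}$ truly lies in the class of compactly generated stable quasi-coherent stacks to which the hypothesis of twisted compact generation applies, and that the equivalence of \ref{quasi-coherent stacks and support} is compatible with this specific choice of $\calC=\calQ_X$; both are built into the formalism of \ref{the unit object of QStk}.
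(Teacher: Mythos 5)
Your proof is correct and follows exactly the same route as the paper's: apply the twisted compact generation hypothesis to $\calQ_{X-U}$ (which is compactly generated stable by \cite[7.1.1.12]{lurie2018sag}) and then transfer via the equivalence $\QCoh(X;\calQ_{X-U})\simeq \QCoh_{X-U}(X)$ from \ref{quasi-coherent stacks and support} with $\calC=\calQ_X$. The only minor difference is that you spell out two alternative ways to deduce the "in particular" clause, whereas the paper treats it as immediate.
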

\begin{proof}
	It follows from \cite[7.1.1.12]{lurie2018sag} that $\calQ_{X-U}$ is compactly generated and stable, so the desired result follows immediately by applying \ref{quasi-coherent stacks and support} to $\calC=\calQ_X$ 
\end{proof}

\begin{pg}
	Before stating our next result, we introduce some terminology. Let $X:\CAlg^{\cn}\rightarrow \widehat{\SSet}$ be a functor which satisfies condition $(\ast)$ of \ref{the induced topological space of quasi-geometric stacks}. Suppose we are given a perfect object $F$ of $\QCoh(X)$. Let $\Supp F$ denote the subset of $|X|$ consisting of those elements $x\in |X|$ such that for any point $\eta: \Spec \kappa \rightarrow X$ representing $x$, $\eta^\ast F \neq 0$. This set is well-defined; we refer to it as the \emph{support} of $F$. In the special case where $X$ is representable by a quasi-separated spectral algebraic space, it follows from \cite[7.1.5.5]{lurie2018sag} that our definition of support is compatible with the definition of support in the sense of \cite[7.1.5.4]{lurie2018sag}. 
\end{pg}

\begin{lemma}\label{compact objects with prescribed support}
	Let $X$ be a quasi-geometric spectral algebraic stack which is of twisted compact generation. Then for each quasi-compact open subset $\calU\subseteq |X|$, there exists a compact object $F$ of $\QCoh(X)$ with support $|X|-\calU$. 
\end{lemma}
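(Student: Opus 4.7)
The plan is to exhibit the desired compact object as a finite direct sum of compact generators of $\QCoh_{X-U}(X)$, where $U$ is the quasi-geometric open substack corresponding to $\calU$, and then cut down to finitely many via a compactness argument in the patch topology on $|X|$.

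First, I invoke Lemma \ref{quasi-compact open subsets and quasi-geometric stacks} to realize $\calU$ as $|j|(|U|)$ for a representable open immersion $j:U\to X$ of quasi-geometric spectral algebraic stacks. The stable quasi-coherent stack $\calQ_{X-U}$ of \ref{the unit object of QStk} is compactly generated, so applying the twisted compact generation hypothesis at $\calC=\calQ_{X-U}$ together with Lemma \ref{quasi-coherent stacks and support} shows that $\QCoh_{X-U}(X)\simeq \QCoh(X;\calQ_{X-U})$ is compactly generated; fix a set of compact generators $\{F_i\}_{i\in I}$. I claim that each $F_i$ is compact when viewed in $\QCoh(X)$. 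Indeed, the fully faithful inclusion $\iota:\QCoh_{X-U}(X)\hookrightarrow\QCoh(X)$ admits a right adjoint $\Gamma$, and compactness of $F_i$ in $\QCoh_{X-U}(X)$ transfers to compactness in $\QCoh(X)$ provided $\Gamma$ preserves filtered colimits. Locally on an affine chart $\Spec R\to X$ with the image of $U$ cut out by a finitely generated ideal $I\subseteq \pi_0 R$, $\Gamma$ specializes to the local cohomology functor $\Gamma_I$, which commutes with filtered colimits by virtue of its finite Koszul description; this globalizes since $\Gamma$ commutes with flat pullback and $X$ admits a fiber smooth cover by affines.

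Next, I verify that $\bigcup_i \Supp F_i = |X|-\calU$. The containment $\Supp F_i\subseteq |X|-\calU$ is automatic because $j^\ast F_i\simeq 0$. For the reverse, given $x\in |X|-\calU$, choose a representative $\eta:\Spec\kappa\to X$ with $\kappa$ a field. Since $x\notin\calU$, the fiber product $\Spec\kappa\times_X U$ is empty, so flat base change along the open immersion $j$ yields $j^\ast\eta_\ast\kappa\simeq 0$, placing $\eta_\ast\kappa$ in $\QCoh_{X-U}(X)$. As $\eta_\ast\kappa\neq 0$, some $F_i$ detects it in the sense that $\Ext^\ast_{\QCoh_{X-U}(X)}(F_i,\eta_\ast\kappa)\neq 0$; by adjunction, $\eta^\ast F_i\neq 0$, and hence $x\in\Supp F_i$.

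Finally, I extract a finite subcover. By Lemma \ref{basis of quasi-compact open subsets}, $|X|$ is quasi-compact with a basis of quasi-compact opens, and finite intersections of quasi-compact opens remain quasi-compact because the diagonal of $X$ is quasi-affine (in particular, separated). This endows $|X|$ with the structure of a spectral space. Each compact $F_i$ is locally perfect on affine charts, so $\Supp F_i$ is closed in $|X|$ with quasi-compact open complement, hence clopen in the patch (constructible) topology; the same holds for $|X|-\calU$. Compactness of the patch topology then exhibits finitely many indices $i_1,\ldots,i_n$ with $|X|-\calU=\Supp F_{i_1}\cup\cdots\cup\Supp F_{i_n}$. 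The finite direct sum $F:=F_{i_1}\oplus\cdots\oplus F_{i_n}$ is a compact object of $\QCoh(X)$ with $\Supp F=|X|-\calU$, completing the proof. The most delicate step is the first-paragraph assertion that compact objects of $\QCoh_{X-U}(X)$ remain compact in $\QCoh(X)$, which rests on the local-to-global passage for the right adjoint $\Gamma$ commuting with filtered colimits.
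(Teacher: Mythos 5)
Your overall skeleton---lift compact generators of $\QCoh_{X-U}(X)$ to compact objects of $\QCoh(X)$, identify their supports with $|X|-\calU$, then extract a finite subcover via the constructible topology---matches the paper's proof. Two steps deserve comment.

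For the claim that compact objects of $\QCoh_{X-U}(X)$ remain compact in $\QCoh(X)$, you argue via a local Koszul description of the right adjoint $\Gamma$ and an unproved assertion that "$\Gamma$ commutes with flat pullback." The paper takes a cleaner route: $(\QCoh_{X-U}(X),\QCoh(U))$ is a semi-orthogonal decomposition, so there is a fiber sequence $R(P)\to P\to j_\ast j^\ast P$; since $j_\ast$ preserves small colimits (\cite[6.3.4.3]{lurie2018sag}) and $\QCoh(X)$ is stable, $R$ preserves filtered colimits. Your version can probably be made to work, but the flat-base-change claim for $\Gamma$ and the local-to-global passage are exactly the kind of thing the fiber-sequence argument lets you avoid.

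The genuine gap is in the finite-subcover step. You assert that quasi-compactness of $|X|$, a basis of quasi-compact opens, and closure of those opens under finite intersection "endow $|X|$ with the structure of a spectral space." That is not enough: spectrality also requires \emph{sobriety}, and sobriety of $|X|$ for a quasi-geometric spectral algebraic stack is a nontrivial fact that is nowhere established in the paper (nor in your argument). Without it, compactness of the patch/constructible topology on $|X|$---which is what your Alexander-subbase argument needs---is not available. The paper sidesteps this entirely: it chooses a fiber smooth surjection $f:\sfX_0\to X$ with $\sfX_0$ affine, pulls the covering $\{\Supp F_i\}$ and the set $|X|-\calU$ back along the surjective open map $|f|$ (using that support of a perfect complex is compatible with pullback), runs the constructible-topology argument on the spectral space $|\sfX_0|$, and then pushes the finite subcover forward. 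You should either supply a proof that $|X|$ is spectral (sobriety included), or, more economically, carry out the same reduction to the affine chart.
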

\begin{proof}
	\ref{quasi-compact open subsets and quasi-geometric stacks} shows that there exist a quasi-geometric spectral algebraic stack $U$ and a representable open immersion $j:U \rightarrow X$ such that $|j|(|U|)=\calU$. Then \ref{twisted compact generation and global sections} supplies a set of compact generators $\{F_i\}_{i\in I}$ for $\QCoh_{X-U}(X)$. Using \cite[6.3.4.1]{lurie2018sag}, we observe that $j^\ast:\QCoh(X)\rightarrow \QCoh(U)$ admits a fully faithful right adjoint $j_\ast$. It then follows from \cite[7.2.1.7]{lurie2018sag} that the pair of subcategories $(\QCoh_{X-U}(X), \QCoh(U))$ determine a semi-orthogonal decomposition of $\QCoh(X)$ (see \cite[7.2.0.1]{lurie2018sag}). Using \cite[7.2.1.4]{lurie2018sag}, we see that the inclusion $\QCoh_{X-U}(X)\subseteq \QCoh(X)$ admits a right adjoint which we denote by $R$. According to \cite[7.2.0.2]{lurie2018sag}, there is a fiber sequence $R(P) \rightarrow P \rightarrow j_\ast j^\ast P$ for each $P \in \QCoh(X)$. Since $j_\ast$ preserves small colimits \cite[6.3.4.3]{lurie2018sag} (and $\QCoh(X)$ is stable), $R$ preserves filtered colimits, and therefore each $F_i$ is also compact as an object of $\QCoh(X)$ (see \cite[5.5.7.2]{MR2522659}). In particular, every $F_i\in \QCoh(X)$ is perfect by virtue of \cite[9.1.5.2]{lurie2018sag}. We now proceed as in the proof of \cite[4.10]{MR3705292}. We first claim that the $|X|-\calU$ can be identified with the union of the supports $\{\Supp F_i\}$. For this, it suffices to prove that $|X|-\calU$ belongs to the union of $\{\Supp F_i\}$. Let $x\in |X|-\calU$ and choose a point $\eta: \Spec \kappa \rightarrow X$ representing $x$. Then \cite[6.3.4.1]{lurie2018sag} guarantees that $\eta_\ast \calO_{\Spec \kappa} \in \QCoh_{X-U}(X)$. Since $\{F_i\}$ is a set compact generators for $\QCoh_{X-U}(X)$ and $\eta_\ast \calO_{\Spec \kappa}$ is nonzero, we conclude that $\eta^\ast F_i$ is not equivalent to $0$ for some $i$, so that $x \in \Supp F_i$ as desired. To complete the proof, it will suffice to show that $|X|-\calU$ can be covered by finitely many supports $\Supp F_i$. Choose a fiber smooth surjection $f:\sfX_0\rightarrow X$, where $\sfX_0$ is affine. Replacing $X$ by $\sfX_0$, we are reduced to the case where $X$ is affine. In this case, \cite[3.6.3.4]{lurie2018sag} guarantees that $|X|$ is a coherent topological space (see also \ref{compatible underlying topological spaces}). Using \cite[7.1.5.5]{lurie2018sag}, we see that each $\Supp F_i$ is a closed subset of $|X|$ complementary to a quasi-compact open subset. Consequently, every $\Supp F_i$ and $|X|-\calU$ are constructible, so that the desired result follows by using the constructible topology on $|X|$ of \cite[4.3.1.5]{lurie2018sag}.
\end{proof}

\begin{pg}
	It is a tautology that the class of quasi-geometric spectral algebraic stacks which are of twisted compact generation includes the basic building blocks of spectral algebraic geometry: 
\end{pg}

\begin{lemma}\label{affines are of twisted compact generation}
	Let $\sfX$ be an affine spectral Deligne-Mumford stack. Then $\sfX$ is of twisted compact generation. 
\end{lemma}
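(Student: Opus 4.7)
The plan is to reduce the claim to the observation that for an affine spectral Deligne--Mumford stack, the global sections $\infty$-category of a quasi-coherent stack is canonically equivalent to its value at the identity point, so the twisted compact generation condition becomes a tautology.

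Write $\sfX \simeq \Spec R$ for some connective $\bbE_\infty$-ring $R$, and let $\calC$ be a compactly generated stable quasi-coherent stack on $\sfX$. Let $\mathrm{id} \in \sfX(R)$ denote the identity point and let $\calC_{\mathrm{id}}$ denote the stable $R$-linear $\infty$-category obtained by evaluating $\calC$ at $\mathrm{id}$. By the definition of compact generation for quasi-coherent stacks (as in \cite[10.1.6.1]{lurie2018sag}, or equivalently by unwinding what it means for a prestable $R$-linear $\infty$-category to be compactly generated), the hypothesis on $\calC$ implies that $\calC_{\mathrm{id}}$ is a compactly generated stable $R$-linear $\infty$-category.

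The first main step is to identify $\QCoh(\sfX;\calC)$ with $\calC_{\mathrm{id}}$. Following \ref{global section functor for quasi-geometric stacks}, $\QCoh(\sfX;\calC)$ is defined as the value at $\calC$ of the right adjoint to the pullback $q^\ast:\Groth_\infty \to \QStk^{\PSt}(\sfX)$ along the projection $q:\sfX \to \Spec \mathbb{S}$. Since $\sfX$ is affine, the functor of points is representable by $\mathrm{id}$, and the standard identification in Lurie SAG (such as the affine case of \cite[10.4.1.1]{lurie2018sag}) supplies a canonical equivalence $\QCoh(\sfX;\calC) \simeq \calC_{\mathrm{id}}$; more informally, global sections over an affine base are obtained by evaluating at the universal point.

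Combining these two steps gives that $\QCoh(\sfX;\calC)$ is compactly generated, verifying condition $(\ast)$ of \ref{twisted compact generation}. The only nontrivial input is the affine-case identification of global sections with the value at the identity point, which is built into Lurie's construction of the global section functor; there is no real obstacle once this identification is invoked, so the proof is essentially a matter of citing the appropriate result from \cite{lurie2018sag} and unwinding definitions.
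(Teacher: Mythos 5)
Your proof is correct and matches the paper's (implicit) approach: the paper offers no proof, introducing the lemma with the remark that it is ``a tautology,'' and your argument simply unwinds that tautology --- over an affine $\Spec R$, the global section functor identifies $\QCoh(\Spec R;\calC)$ with the underlying Grothendieck prestable $\infty$-category of $\calC_{\mathrm{id}}$, and compact generation of the quasi-coherent stack means precisely that this $R$-linear $\infty$-category is compactly generated. The only minor caveat is the precise citation in \cite{lurie2018sag} for the definition of a compactly generated quasi-coherent stack, but you hedge this appropriately by also unwinding the definition directly.
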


\begin{pg}
	Our goal in this section is to prove that quasi-geometric spectral algebraic stacks which admit a quasi-finite presentation are of twisted compact generation \ref{twisted compact generation of spectral algebraic stacks}. We begin by establishing some preliminaries.
\end{pg}

\begin{lemma}\label{pullback square and right adjointability}
	Suppose we are given a pullback diagram of quasi-geometric stacks satisfying condition $(\ast)$ of \emph{\ref{global section functor for quasi-geometric stacks}}:
$$
\Pull{X'}{Y'}{X}{Y.}{f'}{g'}{g}{f}
$$
Let $\calC$ be a prestable quasi-coherent stack on $Y$. If $f$ is a relative spectral algebraic space which is quasi-compact quasi-separated and $g$ is representable flat, then the commutative diagram of $\infty$-categories
$$
\Pull{\QCoh(Y;\calC)}{\QCoh(X;f^\ast \calC)}{\QCoh(Y';g^\ast\calC)}{\QCoh(X';{g'}^\ast f^\ast \calC)}{f^\ast}{g^\ast}{{g'}^\ast}{{f'}^\ast}
$$
is right adjointable. 
\end{lemma}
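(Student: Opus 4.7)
The strategy is a standard double reduction: first reduce to the case where $Y$ is affine by fpqc descent, then reduce to the case where $X$ is affine via a scallop decomposition, at which point the claim becomes flat base change for modules. Recall that right adjointability of the square means that the right adjoints $f_\ast, f'_\ast$ of $f^\ast, f'^\ast$ exist and that the Beck--Chevalley transformation $g^\ast f_\ast \to f'_\ast g'^\ast$ is an equivalence.

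To reduce to the case where $Y$ is affine, I would pick a faithfully flat map $\sfY_0 \to Y$ with $\sfY_0$ affine (from condition $(\ast)$) and form its \Cech nerve $\sfY_\bullet$. Pulling back along $g$, $f$, and $g'$ produces compatible faithfully flat covers of $Y'$, $X$, and $X'$ whose \Cech nerves yield pullback squares of the same shape as the original at each simplicial level. By \ref{a description of global sections of quasi-coherent stacks}, each of the four global section $\infty$-categories is the $\Delta$-indexed limit of its restriction to the nerve, and the Beck--Chevalley map is the limit of the level-wise Beck--Chevalley maps. Since a limit of right-adjointable squares of presentable $\infty$-categories is again right-adjointable, the problem reduces to checking it at each simplicial level, where $Y$ has been replaced by an affine spectral Deligne--Mumford stack.

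With $Y = \Spec A$ affine, the relative spectral algebraic space hypothesis on $f$ makes $X$ a quasi-compact quasi-separated spectral algebraic space, which by \cite[3.4.2.1]{lurie2018sag} admits a scallop decomposition. I would argue by induction on its length. At each excision step, determined by an excision square of spectral algebraic spaces whose upper-right corner is affine, the sheaf condition for the functor $\chi_R$ recalled in \ref{quasi-coherent stacks in DAG} provides a Mayer--Vietoris decomposition of the global section functor, and the same holds after pulling back along $g$. The right adjoints assemble componentwise, and the Beck--Chevalley map for the larger open stratum is an equivalence provided the maps for the three other corners are. Iteration reduces us to the case where $X$ is also affine, in which all four $\infty$-categories are base changes of the prestable $A$-linear $\infty$-category underlying $\calC$ along the relevant maps of $\bbE_\infty$-rings; the right adjoints are restriction of scalars, and the Beck--Chevalley map is the canonical comparison, which is an equivalence because $A\to A'$ is flat.

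The main obstacle is the inductive step via excision squares: one must verify that the right adjoint $f_\ast$ on prestable quasi-coherent stacks satisfies Nisnevich-type descent along excision squares, compatibly with the flat pullback $g^\ast$. This should follow formally from the combination of Nisnevich descent for $\chi_R$ and the existence of right adjoints to pullback along quasi-compact morphisms, but it is the technical point that requires care and is not already stated verbatim in the preceding sections.
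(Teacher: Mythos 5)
Your first reduction -- replacing $Y$ by an affine (or quasi-affine, at higher \v{C}ech levels) cover and passing to the $\Delta$-indexed limit, using the fact that a limit of right-adjointable squares is right-adjointable -- is exactly the paper's use of \cite[4.7.4.18]{lurie2017ha}, and is fine. The second reduction, however, is both unnecessary and incomplete. Once $Y$ has been replaced by a quasi-affine spectral Deligne--Mumford stack, all four corners of the square are representable by spectral Deligne--Mumford stacks (recall $g$ is representable and $f$ is a relative spectral algebraic space), at which point \cite[10.1.7.9, 10.1.7.13]{lurie2018sag} already give the existence of $f_\ast$ and the flat base-change equivalence directly; this is what the paper's proof cites, so no scallop decomposition is needed. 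Your attempt to re-derive this by induction on a scallop decomposition also contains a genuine gap, which you flag but do not close: the Mayer--Vietoris decomposition of the global sections $\QCoh(\sfX;\calC)$ along an excision square does not follow from the sheaf property of $\chi_R$ (that property concerns the $\infty$-categories of quasi-coherent stacks themselves, not their global sections -- the excision statement for global sections is \cite[10.2.3.1]{lurie2018sag}, which the paper invokes separately in \ref{Nisnevich excision of global section functor}), and the compatibility of the Beck--Chevalley transformations with that Mayer--Vietoris limit would still need to be established, which is precisely the content one would be circumventing by citing \cite[10.1.7.13]{lurie2018sag} in the first place.
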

\begin{proof}
	This follows by combining \cite[10.1.7.9]{lurie2018sag} with an extension of \cite[10.1.7.13]{lurie2018sag} to quasi-geometric stacks satisfying condition $(\ast)$ of \ref{global section functor for quasi-geometric stacks} (which can be achieved by using \cite[4.7.4.18]{lurie2017ha}). 
\end{proof}

\begin{lemma}\label{compact pullback functor}
	Let $f:X\rightarrow Y$ be a relative spectral algebraic space which is quasi-compact quasi-separated morphism of quasi-geometric stacks satisfying condition $(\ast)$ of \emph{\ref{global section functor for quasi-geometric stacks}}. Let $\calC$ be a prestable quasi-coherent stack on $Y$. Then the pullback functor $f^\ast: \QCoh(Y;\calC) \rightarrow \QCoh(X;f^\ast \calC)$ is compact. 
\end{lemma}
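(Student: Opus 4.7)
The plan is to reformulate the compactness of $f^\ast$ as the preservation of small filtered colimits by its right adjoint $f_\ast : \QCoh(X; f^\ast \calC) \to \QCoh(Y; \calC)$, and to verify this preservation in two stages: first reducing to the case where $Y$ is affine by faithfully flat descent on the base, and then reducing to the case where $X$ is affine by a scallop decomposition of the source.

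For the first reduction, I would use condition $(\ast)$ of \ref{global section functor for quasi-geometric stacks} to choose a faithfully flat, locally almost of finite presentation morphism $g : \sfY_0 \to Y$ with $\sfY_0$ affine, and form the pullback square producing $g' : \sfX_0 \to X$ and $f_0 : \sfX_0 \to \sfY_0$. Since $f$ is a quasi-compact quasi-separated relative spectral algebraic space and $\sfY_0$ is affine, $\sfX_0$ is itself a quasi-compact quasi-separated spectral algebraic space. Lemma \ref{pullback square and right adjointability} supplies a natural equivalence $g^\ast \circ f_\ast \simeq (f_0)_\ast \circ (g')^\ast$. By Remark \ref{a description of global sections of quasi-coherent stacks}, $\QCoh(Y;\calC)$ is the totalization of the \Cech-nerve cosimplicial diagram for $g$, so $g^\ast$ is conservative. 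Combined with the fact that $(g')^\ast$ preserves all small colimits as a left adjoint, this reduces the problem to showing that $(f_0)_\ast$ preserves small filtered colimits.

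The second reduction uses a scallop decomposition $\emptyset \simeq \sfU_0 \to \sfU_1 \to \cdots \to \sfU_n \simeq \sfX_0$, which exists by \cite[3.4.2.1]{lurie2018sag}. I would induct on $i$: at each step the excision square of \cite[2.5.3.1]{lurie2018sag}, combined with Lemma \ref{pullback square and right adjointability} and the analysis of the localization subcategory $\QCoh_{\sfU_i - \sfU_{i-1}}(\sfU_i; \calC|_{\sfU_i})$ provided by \ref{the full subcategory spanned by objects with prescribed support} and Lemma \ref{quasi-coherent stacks and support}, expresses $\QCoh(\sfU_i; \calC|_{\sfU_i})$ as a fiber product of the global-sections categories attached to $\sfU_{i-1}$, the affine piece of the excision square, and the quasi-affine open inside that affine. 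Since small filtered colimits commute with finite limits of presentable stable $\infty$-categories, the inductive hypothesis combined with the doubly-affine base case yields the preservation of filtered colimits by $(f_0)_\ast$. In the doubly-affine base case, the pushforward on global sections of prestable quasi-coherent stacks can be identified (using \cite[10.1.7.9]{lurie2018sag}) with the restriction of scalars for the underlying module categories along the map of connective $\bbE_\infty$-rings, which manifestly preserves all small colimits.

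The main obstacle will be rigorously establishing the Mayer--Vietoris-type fiber-square description of $\QCoh(\sfU_i; \calC|_{\sfU_i})$ induced by the excision square for a general prestable quasi-coherent stack; this amounts to combining the right adjointability of Lemma \ref{pullback square and right adjointability} with the semi-orthogonal decomposition of $\QCoh(\sfU_i; \calC|_{\sfU_i})$ determined by the open-closed pair $(\sfU_{i-1},\, \sfU_i - \sfU_{i-1})$ and the \'etale piece of the excision square. The remaining ingredients---preservation of filtered colimits by restriction of scalars of modules and commutation of filtered colimits with finite limits of presentable stable $\infty$-categories---are standard.
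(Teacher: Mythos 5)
Your proposal and the paper share the same first move --- use the \v{C}ech descent description of \ref{a description of global sections of quasi-coherent stacks} together with right adjointability (\ref{pullback square and right adjointability}) and the conservativity of pullback along a faithfully flat cover to reduce to the case where $X$ and $Y$ are representable by spectral Deligne-Mumford stacks (in your version, $Y$ affine, $X$ a quasi-compact quasi-separated spectral algebraic space). From there the two arguments diverge. The paper simply invokes two results of Lurie, \cite[10.1.7.15]{lurie2018sag} and \cite[10.3.1.13]{lurie2018sag}, which already record the compactness of the pullback on $\QCoh(-;\calC)$ for quasi-compact quasi-separated morphisms of spectral Deligne-Mumford stacks. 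You instead re-derive that representable case from scratch via a scallop decomposition of $\sfX_0$, an inductive Mayer--Vietoris (excision) argument, and the observation that in the doubly-affine case the pushforward is restriction of scalars. Your route is correct in outline and more self-contained, but it amounts to reproving the cited portions of SAG 10.1.7 and 10.3.1; the obstacle you flag --- making the fiber-square description of $\QCoh(\sfU_i;\calC)$ precise for a general prestable quasi-coherent stack --- is exactly what the cited Lurie results already package (via \cite[10.2.3.1, 10.2.4.2]{lurie2018sag}, which the paper itself only deploys later, in \ref{Nisnevich excision of global section functor}). So there is no gap, but the paper buys brevity by citing, while your approach buys transparency at the cost of re-establishing that excision machinery.

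One small caution: your reduction uses that $g^\ast$ is conservative because it is the restriction to level $0$ of the \v{C}ech totalization. That conservativity is not automatic for an arbitrary cosimplicial limit; here it holds because the descent diagram for a flat cover is monadic, and it is worth saying so explicitly. Also, in the inductive step you should check that the corner $\sfV$ of the excision square is quasi-affine (it is, being a quasi-compact open in an affine) so that its pushforward to the affine base is covered either by the base case or by \cite[10.1.7.15]{lurie2018sag}; otherwise the induction closes correctly.
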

\begin{proof}
	By virtue of \ref{a description of global sections of quasi-coherent stacks} and \ref{stability of global section}, we can reduce to the case where $X$ and $Y$ are representable by spectral Deligne-Mumford stacks, in which case the desired result follows by combining \cite[10.1.7.15]{lurie2018sag} with \cite[10.3.1.13]{lurie2018sag}.
\end{proof}

\begin{pg}
	As a first step towards the proof of \ref{twisted compact generation of spectral algebraic stacks}, we show that the property of being of twisted compact generation behaves well with respect to open immersions:
\end{pg}

\begin{proposition}\label{open immersion and twisted compact generation}
	Let $j:U\rightarrow X$ be an open immersion of quasi-geometric spectral algebraic stacks. If $X$ is of twisted compact generation, then so is $U$.
\end{proposition}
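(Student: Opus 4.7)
The plan is to extend the given compactly generated stable prestable quasi-coherent stack $\calD$ on $U$ to such a stack $\calC$ on $X$, invoke the hypothesis that $X$ is of twisted compact generation, and then realize $\QCoh(U;\calD)$ as a Verdier quotient of $\QCoh(X;\calC)$ whose kernel is itself compactly generated.

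For the extension, I take $\calC := j_\ast\calD$, where $j_\ast : \QStk^{\PSt}(U) \to \QStk^{\PSt}(X)$ is the right adjoint to $j^\ast$ supplied by a variant of \cite[10.1.4.1]{lurie2018sag}. Since $j$ is a monomorphism of functors---any open immersion is---the counit $j^\ast j_\ast \to \mathrm{id}$ is an equivalence, so that $j^\ast\calC \simeq \calD$. Stability of $\calC$ follows pointwise from that of $\calD$. The technical step, which I expect to be the main obstacle, is compact generation of $\calC$: for each point $\eta:\Spec R \to X$, the fiber $\calC_\eta$ is determined by $\calD$ restricted to the quasi-compact open subscheme $\Spec R\times_X U \hookrightarrow \Spec R$, and compact generation of each $\calC_\eta$ as an $R$-linear $\infty$-category should follow from a direct analysis of the affine-local structure of open immersions.

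Granting compact generation of $\calC$, $\QCoh(X;\calC)$ is compactly generated by hypothesis. By the semi-orthogonal decomposition argument used in the proof of \ref{compact objects with prescribed support}---applied to $\calC$ rather than $\calQ_X$, using \ref{pullback square and right adjointability} together with twisted analogues of \cite[6.3.4.1, 6.3.4.3]{lurie2018sag} and \cite[7.2.1.7]{lurie2018sag}---the functor $j^\ast:\QCoh(X;\calC) \to \QCoh(U;\calD)$ admits a fully faithful right adjoint $j_\ast$ which preserves small colimits, and the pair $(\QCoh_{X-U}(X;\calC),\,\QCoh(U;\calD))$ forms a semi-orthogonal decomposition of $\QCoh(X;\calC)$; consequently, $j^\ast$ realizes $\QCoh(U;\calD)$ as the Verdier quotient of $\QCoh(X;\calC)$ by $\QCoh_{X-U}(X;\calC)$. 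By \ref{quasi-coherent stacks and support} the kernel is identified with $\QCoh(X;\calC_{X-U})$, and since $\calC_{X-U}$ remains compactly generated by the observation of \ref{the unit object of QStk}, a second application of the hypothesis on $X$ shows that this kernel is compactly generated. The right adjoint to the inclusion of the kernel preserves filtered colimits---it fits into a fiber sequence with $j_\ast j^\ast$, and $j_\ast$ preserves small colimits for representable quasi-affine $j$---so compact objects of the kernel remain compact in $\QCoh(X;\calC)$, and Neeman's localization theorem then yields that $\QCoh(U;\calD)$ is compactly generated, completing the proof.
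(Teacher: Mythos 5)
Your plan begins with the correct step---take $\calC := j_\ast\calD$ on $X$, note $j^\ast\calC \simeq \calD$---and this is precisely what the paper does. But two of your steps deserve comment.

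First, the crucial technical point that $j_\ast\calD$ is again compactly generated (and stable) is not something that ``should follow from a direct analysis of the affine-local structure of open immersions.'' For a point $\eta:\Spec R\to X$ the fiber $(j_\ast\calD)_\eta$ is $\QCoh(\Spec R\times_X U;\,\calD|_{\Spec R\times_X U})$, with $\Spec R\times_X U$ a quasi-compact quasi-separated spectral algebraic space; compact generation of this fiber is a genuine theorem of Lurie (\cite[10.3.2.1, 10.3.2.3]{lurie2018sag}), and stability similarly uses \cite[10.3.1.7]{lurie2018sag}. Leaving this as an expectation is a real gap in your argument, since this is exactly the input that makes the whole strategy work.

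Second, once compact generation of $j_\ast\calD$ is in hand, your semi-orthogonal decomposition and Neeman localization machinery is more than is needed. By \ref{pullback square and right adjointability} and \ref{compact pullback functor}, the adjunction $\Adjoint{j^\ast}{\QCoh(X;j_\ast\calD)}{\QCoh(U;\calD)}{j_\ast}$ has $j_\ast$ fully faithful (hence conservative) and preserving small filtered colimits. The transfer principle for compact generation along such an adjunction (\cite[6.2]{lurie2011dagxi}, recalled in \ref{adjunction and a set of compact generators}) then gives the result in one stroke: no Verdier quotient, no identification of the kernel, and no second invocation of the hypothesis that $X$ is of twisted compact generation. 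Your route via Neeman's theorem is sound (it is the engine behind the subtler excision case in \ref{excision squares and twisted compact generation}), but for an open immersion the direct adjunction transfer is the cleaner argument the paper uses. Worth internalizing the distinction: Neeman's theorem is needed when you must \emph{lift} compact objects against a localization whose right adjoint is not conservative; here $j_\ast$ is fully faithful, so the lighter tool suffices.
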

\begin{proof}
	Let $\calC$ be a compactly generated stable quasi-coherent stack on $U$; we wish to show that $\QCoh(U; \calC)$ is compactly generated. It follows from \ref{pullback square and right adjointability} and \ref{compact pullback functor} that we have an adjunction
$$
\Adjoint{j^\ast}{\QCoh(X; j_\ast \calC)}{\QCoh(U; \calC)}{j_\ast,}
$$
where $j_\ast$ is conservative and preserves small filtered colimits. Using \cite[10.3.2.3]{lurie2018sag} and \cite[10.3.1.7]{lurie2018sag} (see also \cite[10.1.4.1]{lurie2018sag}), we see that $j_\ast \calC \in \QStk^{\PSt}(X)$ is compactly generated and stable. Since $X$ is of twisted compact generation, the desired result follows from \cite[6.2]{lurie2011dagxi}.
\end{proof}

\begin{pg}
	Our proof of \ref{twisted compact generation of spectral algebraic stacks} will require two descent results about the property of being of twisted compact generation. Let us begin with the descent along finite morphisms. In what follows, we regard $\LPres$ as equipped with the symmetric monoidal structure described in \cite[4.8.1.15]{lurie2017ha}. Note that $\Pres^{\St}$ inherits a symmetric monoidal structure from $\LPres$ (see \cite[4.8.2.18]{lurie2017ha}). For the first descent result, we need the following stable version of \cite[10.2.4.2]{lurie2018sag}:
\end{pg}

\begin{lemma}\label{quasi-affine base change in the stable case}
	Let $f:X\rightarrow Y$ be a representable quasi-affine morphism of quasi-geometric stacks satisfying condition $(\ast)$ of \emph{\ref{global section functor for quasi-geometric stacks}}, and let $\calC$ be a stable quasi-coherent stack on $Y$. Then the canonical morphism
$$
\QCoh(X) \otimes_{\QCoh(Y)}\QCoh(Y; \calC) \rightarrow \QCoh(X; f^\ast \calC)
$$
is an equivalence of presentable stable $\infty$-categories.
\end{lemma}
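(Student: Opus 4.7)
The plan is to mimic the proof of the Grothendieck prestable analogue (proved by Lurie in \cite{lurie2018sag} for spectral Deligne-Mumford stacks), adapted to presentable stable $\infty$-categories and extended to quasi-geometric stacks. Since $f$ is representable quasi-affine, it admits a canonical factorization $X \xrightarrow{j} Y' \xrightarrow{g} Y$, where $g: Y' \to Y$ is representable affine (namely $Y' = \mathrm{Spec}_Y(f_\ast \calO_X)$) and $j$ is a representable quasi-compact open immersion. It therefore suffices to verify the equivalence when $f$ is either affine or a quasi-compact open immersion.

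\textbf{Affine case.} Set $\calA = g_\ast \calO_{Y'}$, a connective commutative algebra object of $\QCoh(Y)$. By Barr-Beck monadicity, there is a canonical equivalence $\QCoh(Y') \simeq \Mod_\calA(\QCoh(Y))$. A similar monadicity argument---applied to the action of $\QCoh(Y)$ on $\QCoh(Y; \calC)$ and the base change along $g$---identifies $\QCoh(Y'; g^\ast \calC) \simeq \Mod_\calA(\QCoh(Y; \calC))$; this is established by \Cech descent along a fiber smooth surjection $Y_0 \to Y$ with $Y_0$ affine, using \ref{a description of global sections of quasi-coherent stacks} and \ref{stability of global section} to reduce to the affine base case. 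The standard compatibility of module categories and relative tensor products (\cite[4.8.4.6]{lurie2017ha}) then yields
$$
\QCoh(Y') \otimes_{\QCoh(Y)} \QCoh(Y; \calC) \simeq \Mod_\calA(\QCoh(Y; \calC)) \simeq \QCoh(Y'; g^\ast \calC).
$$

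\textbf{Open immersion case.} The pullback $j^\ast: \QCoh(Y') \to \QCoh(X)$ exhibits $\QCoh(X)$ as the Verdier quotient of $\QCoh(Y')$ by the full stable subcategory $\QCoh_{Y'-X}(Y')$. Tensoring with $\QCoh(Y'; g^\ast \calC)$ over $\QCoh(Y')$ preserves this localization, producing the quotient of $\QCoh(Y'; g^\ast \calC)$ by the image of $\QCoh_{Y'-X}(Y') \otimes_{\QCoh(Y')} \QCoh(Y'; g^\ast \calC)$. By \ref{quasi-coherent stacks and support} (applied to the stable quasi-coherent stack $(g^\ast \calC)_{Y'-X}$ of \ref{the unit object of QStk}), this image identifies with $\QCoh_{Y'-X}(Y'; g^\ast \calC)$, and the resulting quotient is $\QCoh(X; j^\ast g^\ast \calC) \simeq \QCoh(X; f^\ast \calC)$.

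\textbf{Main obstacle.} The hardest step is the identification $\QCoh(Y'; g^\ast \calC) \simeq \Mod_\calA(\QCoh(Y; \calC))$ for representable affine $g$ in the generality of quasi-geometric stacks. Reducing via \Cech descent to the affine base case requires the right adjointability of the relevant pullback squares \ref{pullback square and right adjointability} and compactness of the pullback functors \ref{compact pullback functor}, so that the formation of module categories interchanges with the totalizations computing global sections. Once this is settled, the open immersion case is a formal consequence of the semi-orthogonal decomposition of \ref{quasi-coherent stacks and support} combined with the fact that relative tensor products in $\Pres^{\St}_{\QCoh(Y')}$ preserve Verdier quotients.
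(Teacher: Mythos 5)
Your factorization strategy is a genuinely different route from the paper's. The paper does not decompose $f$ into an affine morphism followed by an open immersion; instead it treats the quasi-affine $f$ directly, setting $\calA = f_\ast \calO_X$ (note that $\calA$ is \emph{not} generally connective, but this is unproblematic for the stable module-category statement \cite[6.3.4.6]{lurie2018sag}). The key move you are missing is the paper's \emph{dualizability} argument: using the identification $\QCoh(X) \simeq \Mod_\calA(\QCoh(Y))$ and \cite[4.8.4.8]{lurie2017ha}, the paper shows $\QCoh(X)$ is dualizable as a $\QCoh(Y)$-module in $\Pres^{\St}$, hence the functor $\QCoh(X)\otimes_{\QCoh(Y)}(\bullet)$ preserves small limits. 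This is exactly the ingredient needed to commute the relative tensor product with the \Cech totalization from \ref{a description of global sections of quasi-coherent stacks} and thereby reduce to spectral Deligne--Mumford stacks, where the prestable statement \cite[10.2.4.2]{lurie2018sag} can be cited and then stabilized via \cite[10.2.1.1, 10.2.1.3]{lurie2018sag}.

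The ``main obstacle'' you name---commuting the formation of module categories with the \Cech totalization---is exactly the step your sketch leaves open: citing \ref{pullback square and right adjointability} and \ref{compact pullback functor} does not by itself make $\Mod_\calA(-)$ or $\QCoh(X)\otimes_{\QCoh(Y)}(-)$ interchange with limits; you need a dualizability or limit-preservation argument, and none is supplied. There is also a technical slip in the factorization itself: for a quasi-affine $f$ that is not affine, $f_\ast\calO_X$ is typically non-connective (the pushforward has higher homotopy), so $\Spec_Y(f_\ast\calO_X)$ is not the correct relative affine hull; one would need the connective cover (or $\pi_0$) of $f_\ast\calO_X$. Finally, even granting the factorization, your open-immersion step quietly needs that the relative tensor product in $\Pres^{\St}$ over $\QCoh(Y')$ sends the exact sequence $\QCoh_{Y'-X}(Y')\to\QCoh(Y')\to\QCoh(X)$ to an exact sequence; this is true because the tensor product preserves colimits, but it is worth stating, since the whole argument rests on it. In short: the overall plan could be made to work, but as written it defers the heart of the proof, and the paper's single dualizability observation both avoids the factorization entirely and closes the descent gap in one stroke.
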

\begin{proof}
	We first claim that the functor $\Mod_{\QCoh(Y)}(\Pres^{\St}) \rightarrow \Pres^{\St}$ which carries a presentable stable $\infty$-category $\calD$ equipped with an action of $\QCoh(Y)$ to the presentable stable $\infty$-category $\QCoh(X)\otimes_{\QCoh(Y)}\calD$ preserves small limits. For this, it will suffice to show that $\QCoh(X)$ is dualizable when viewed as an object of $\Mod_{\QCoh(Y)}(\Pres^{\St})$. Let $\calA \in \CAlg(\QCoh(Y))$ denote the pushforward of the structure sheaf of $X$ (that is, the unit object of $\QCoh(X)$) along $f$. Then \cite[6.3.4.6]{lurie2018sag} supplies an equivalence $\QCoh(X)\rightarrow \Mod_{\calA}(\QCoh(Y))$, so the desired assertion follows from \cite[4.8.4.8]{lurie2017ha}. Combining this observation with \ref{a description of global sections of quasi-coherent stacks} (see also \cite[6.3.4.7]{lurie2018sag}), we are reduced to the case where $X$ and $Y$ are representable by spectral Deligne-Mumford stacks $\sfX$ and $\sfY$, respectively. In this case, it follows from \cite[10.2.1.1]{lurie2018sag} that $\mathrm{LMod}_{\calA}(\QCoh(\sfY; \calC))$ can be identified with the stabilization of $\mathrm{LMod}_{\calA}(\QCoh(\sfY; \calC))_{\geq 0}$ (see \cite[10.2.1.1]{lurie2018sag} for the t-structure). By virtue of the equivalence $\QCoh(\sfX)\simeq \Mod_{\calA}(\QCoh(\sfY))$ and \cite[4.8.4.6]{lurie2017ha}, it can also be identified with $\QCoh(\sfX)\otimes_{\QCoh(\sfY)} \QCoh(\sfY; \calC)$. Invoking our assumption that $f$ is quasi-affine, we obtain an equivalence $\mathrm{LMod}_{\calA}(\QCoh(\sfY;\calC))_{\geq 0}\simeq \QCoh(\sfX, f^\ast  \calC)$ by combining \cite[10.2.1.3]{lurie2018sag} and \cite[10.2.4.2]{lurie2018sag}; the desired equivalence now follows by passing to the stabilization.
\end{proof}

\begin{proposition}\label{finite morphisms and twisted compact generation}
	Let $f:X\rightarrow Y$ be a morphism of quasi-geometric spectral algebraic stacks which is representable, finite, faithfully flat, and locally almost of finite presentation. If $X$ is of twisted compact generation, then so is $Y$.
\end{proposition}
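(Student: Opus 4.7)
The plan is to transfer compact generation from $\QCoh(X; f^\ast \calC)$ back down to $\QCoh(Y; \calC)$ via a right adjoint of the pushforward $f_\ast$, in the same spirit as the proof of \ref{open immersion and twisted compact generation}.

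Let $\calC$ be a compactly generated stable quasi-coherent stack on $Y$; we aim to show $\QCoh(Y; \calC)$ is compactly generated. Since pullback of quasi-coherent stacks is formed fiberwise on $(R, \eta)$-pairs, $f^\ast \calC$ is again compactly generated and stable on $X$, and by hypothesis on $X$ the $\infty$-category $\QCoh(X; f^\ast \calC)$ is compactly generated. As $f$ is finite, in particular representable quasi-affine, \ref{quasi-affine base change in the stable case} combined with \cite[6.3.4.6]{lurie2018sag} supplies natural equivalences
$$
\QCoh(X; f^\ast \calC) \simeq \QCoh(X) \otimes_{\QCoh(Y)} \QCoh(Y; \calC) \simeq \Mod_{\calA}(\QCoh(Y; \calC)),
$$
where $\calA := f_\ast \calO_X \in \CAlg(\QCoh(Y))$; under this identification $f^\ast$ becomes the free-module functor and $f_\ast$ becomes the forgetful functor.

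The hypothesis that $f$ is finite, flat, and locally almost of finite presentation implies that $\calA$ is a perfect (dualizable) object of $\QCoh(Y)$. After base change along any affine $\Spec R \to Y$, $\calA$ restricts to a connective flat $R$-module which is almost of finite presentation, hence finitely generated projective by \cite[7.2.4.20]{lurie2017ha}. A perfect dual $\calA^\vee$ therefore exists in $\QCoh(Y)$, and by faithful flatness of $f$ the local description realizes $\calA$ (and therefore also $\calA^\vee$) as a summand of $\calO_Y^n$ for some $n \geq 1$; in particular both $\calA$ and $\calA^\vee$ act faithfully on $\QCoh(Y; \calC)$.

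Dualizability of $\calA$ now yields that the forgetful functor $f_\ast$ preserves all small colimits and admits a right adjoint $f^{!}$ described by $f^{!}(M) \simeq \calA^\vee \otimes_{\calO_Y} M$, equipped with the canonical $\calA$-module structure induced by the $\calA$-action on $\calA^\vee$. The functor $f^{!}$ preserves all small colimits, being a tensor product with the fixed dualizable object $\calA^\vee$; in particular it preserves filtered colimits. Faithfulness of $\calA^\vee$ further implies that $f^{!}$ is conservative. Applying \ref{adjunction and a set of compact generators} to the adjunction $f_\ast : \QCoh(X; f^\ast \calC) \rightleftarrows \QCoh(Y; \calC) : f^{!}$ then transports a set of compact generators from the source to the target, completing the proof. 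The most delicate point will be the identification $\QCoh(X; f^\ast \calC) \simeq \Mod_{\calA}(\QCoh(Y; \calC))$ compatible with $f^\ast$ and $f_\ast$ as free-module and forgetful functors respectively; once this is in place the remaining manipulations are standard facts about dualizable algebra objects in a presentable stable symmetric monoidal $\infty$-category.
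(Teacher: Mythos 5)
Your proposal is correct in outline and takes a genuinely different route from the paper, so let me compare the two. Both proofs begin identically: using \ref{quasi-affine base change in the stable case} and \cite[6.3.4.6]{lurie2018sag} to identify $\QCoh(X; f^\ast \calC)$ with $\mathrm{LMod}_{\calA}(\QCoh(Y;\calC))$ where $\calA = f_\ast\calO_X$, and observing via \cite[6.1.3.2]{lurie2018sag} that $\calA$ is perfect, hence dualizable. From there the arguments diverge. The paper uses dualizability to show that $f^\ast$ (the free-module functor, $\calA\otimes-$) preserves small limits, hence admits a \emph{left} adjoint $f_+$; it then applies the transfer lemma to the adjunction $(f_+,f^\ast)$, using the fact that $f^\ast$ is conservative by flat descent (\ref{a description of global sections of quasi-coherent stacks}). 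You instead work with the adjunction $(f_\ast, f^{!})$ on the \emph{other} side, where $f^{!}$ is the right adjoint of the forgetful functor, identified with $\calA^\vee\otimes-$. The trade-off: the paper gets conservativity of $f^\ast$ for free from descent, while you need to establish conservativity of $f^{!}$ from scratch; on the other hand, your argument is in some ways more self-contained since it does not go back to the \v{C}ech description. Both are legitimate applications of the same transfer principle \ref{adjunction and a set of compact generators}.

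There is, however, a gap in your justification of the conservativity of $f^{!}$. You write that ``the local description realizes $\calA$ (and therefore also $\calA^\vee$) as a summand of $\calO_Y^n$ for some $n\geq 1$; in particular both $\calA$ and $\calA^\vee$ act faithfully.'' This inference is not valid: being a direct summand of a finite free module does not imply faithfulness (the zero module is a summand of $\calO_Y^n$). What you actually need is that $\calA^\vee$ is \emph{faithfully flat} over $\calO_Y$, which follows because $f$ is finite, faithfully flat, and locally almost of finite presentation: $\calA$ is then a finite locally free $\calO_Y$-module of everywhere positive rank, and so is its dual $\calA^\vee$. Locally $\calA^\vee\otimes M\simeq M^{n}$ for some $n\geq 1$, so $\calA^\vee\otimes M\simeq 0$ forces $M\simeq 0$; one then deduces the global statement for $M\in\QCoh(Y;\calC)$ by restricting along a flat affine cover and using conservativity of restriction. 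With this correction your argument goes through, and it is a perfectly sound alternative to the paper's proof.
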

\begin{proof}
	Let $\calC$ be a compactly generated stable quasi-coherent stack on $Y$; we wish to show that $\QCoh(Y; \calC)$ is compactly generated. We first show that the pullback functor $f^\ast: \QCoh(Y; \calC) \rightarrow \QCoh(X; f^\ast \calC)$ admits a left adjoint. Since it preserves small colimits (see \ref{pullback square and right adjointability}), it will suffice to prove that it preserves small limits by virtue of the adjoint functor theorem \cite[5.5.2.9]{MR2522659}. Let $\calA \in \CAlg(\QCoh(Y))$ denote the pushforward $f_\ast \calO_X$. Combining \ref{quasi-affine base change in the stable case} with \cite[6.3.4.6]{lurie2018sag} and \cite[4.8.4.6]{lurie2017ha}, we can identify $\QCoh(X; f^\ast \calC)$ with $\mathrm{LMod}_{\calA}(\QCoh(Y; \calC))$, under which $f^\ast$ corresponds to the functor $\QCoh(Y; \calC) \rightarrow \mathrm{LMod}_{\calA}(\QCoh(Y; \calC))$
given by tensor product with $\calA$. Since the forgetful functor $\mathrm{LMod}_{\calA}(\QCoh(Y; \calC))\rightarrow \QCoh(Y; \calC)$ is conservative \cite[4.2.3.2]{lurie2017ha} and preserves small limits \cite[4.2.3.3]{lurie2017ha}, it is enough to prove that $\calA$ is dualizable as an object of $\QCoh(Y)$. Invoking our assumption on $f$ (and using \cite[6.3.4.1]{lurie2018sag}), we deduce from \cite[6.1.3.2]{lurie2018sag} that $\calA$ is perfect, hence dualizable as desired (see \cite[6.2.6.2]{lurie2018sag}). Now let $f_+$ denote a left adjoint to $f^\ast$. Since $X$ is of twisted compact generation and $f^\ast$ is conservative (by virtue of \ref{a description of global sections of quasi-coherent stacks}), the desired compact generation follows immediately by applying \cite[6.2]{lurie2011dagxi} to the adjoint pair $(f_+, f^\ast)$. 
\end{proof}

\begin{pg}
	We next show that the property of being a twisted compact generation satisfies descent for excision squares. For this purpose, the Thomason--Neeman localization theorem in the setting of $\infty$-categories by Adeel A. Khan (see \cite[2.11]{khanlecture3}) will play an essential role. Of greatest interest to us is the case of semi-orthogonal decompositions \cite[7.2.0.1]{lurie2018sag}:
\end{pg}

\begin{lemma}\label{a special case of [2.11]{khanlecture3}}
	Let $\calC$ be a compactly generated presentable stable $\infty$-category and let $(\calC_+,\calC_-)$ be a semi-orthogonal decomposition of $\calC$ for which the subcategories $\calC_+, \calC_-$ are compactly generated. Suppose that the inclusion functor $\iota:\calC_-\rightarrow \calC$ preserves small filtered colimits. Let $L$ denote a left adjoint to the inclusion $\iota$ (which exists by virtue of \emph{\cite[7.2.1.7]{lurie2018sag}}). Let $D$ be a compact object of $\calC_-$. Then there exists a compact object $C$ of $\calC$ and an equivalence $L(C)\simeq D\oplus D[1]$ in the $\infty$-category $\calC_-$. 
\end{lemma}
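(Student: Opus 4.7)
My plan is to follow the Thomason--Neeman cone construction. The first step is a compactness argument: since $\calC$ is compactly generated, $\iota(D) \in \calC$ can be written as a filtered colimit $\iota(D) \simeq \colim_\alpha C_\alpha$ with each $C_\alpha$ compact in $\calC$. The adjunction $L \dashv \iota$ combined with the hypothesis that $\iota$ preserves filtered colimits implies that $L$ sends compact objects of $\calC$ to compact objects of $\calC_-$, since the formula $\Map_{\calC_-}(L(-), -) \simeq \Map_\calC(-, \iota(-))$ identifies $\Map_{\calC_-}(L(C),-)$ as filtered-colimit-preserving for any compact $C \in \calC$. Applying $L$ and using $L\iota \simeq \mathrm{id}$, we obtain $D \simeq \colim_\alpha L(C_\alpha)$ as a filtered colimit in $\calC_-$ of compacts. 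Compactness of $D$ then forces $\mathrm{id}_D$ to factor through some $L(C_1)$, producing morphisms $D \xrightarrow{s} L(C_1) \xrightarrow{r} D$ with $r \circ s \simeq \mathrm{id}_D$. Stability of $\calC_-$ ensures the retract splits, so $L(C_1) \simeq D \oplus D'$ with $D' \in \calC_-$ compact.

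The second step is to use the retraction to build the desired compact object. Under $L \dashv \iota$, the retraction $r$ transposes to a morphism $\tilde r: C_1 \to \iota(D)$. Writing $\iota(D)$ once more as a filtered colimit of compacts and invoking compactness of $C_1$, we may factor $\tilde r$ through some compact object $C_2$ of $\calC$, arranged so that the composite $L(C_1) \to L(C_2) \to D$ realizes the retraction $r$ after a suitable enlargement. Taking a cone $C := \mathrm{cofib}(f: C_1' \to C_2')$ of a carefully chosen morphism between compacts built out of $C_1$ and $C_2$, the object $C$ is compact in $\calC$ (cofibers of compacts being compact in a stable $\infty$-category), and a direct computation of $L(C)$ as $\mathrm{cofib}(L(f))$---using the splitting $L(C_1) \simeq D \oplus D'$ and the fact that in the stable $\infty$-category $\calC_-$ the cofiber of a split retraction $D \oplus D' \to D$ is $D'[1]$---produces $L(C) \simeq D \oplus D[1]$, with the shifted copy absorbing the unwanted $D'$-part.

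The main obstacle is the bookkeeping in the cone trick: one must assemble the pieces so that the final cofiber in $\calC_-$ is exactly $D \oplus D[1]$, and not $D \oplus D$, $D'[1]$, or some other nearby object. The mechanism is the standard Neeman swindle---the idempotent $e = s \circ r$ on $L(C_1)$ splitting off $D$ does not in general lift strictly to an idempotent on $C_1$ in $\calC$, and the cofiber of any approximate lift of $e$ inevitably picks up an extra shifted copy of $D$, which is precisely what the formula $L(C) \simeq D \oplus D[1]$ records. Since the statement is essentially the Thomason--Neeman localization theorem formulated for presentable stable $\infty$-categories, I would reduce the argument to Khan's \cite[2.11]{khanlecture3} after verifying that our hypotheses---compact generation of $\calC$ and $\calC_-$, and filtered-colimit preservation by $\iota$---match those of Khan's setup.
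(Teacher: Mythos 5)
Your ultimate plan---reduce to Khan's \cite[2.11]{khanlecture3}---matches the paper's proof, but the preliminary ``direct'' sketch is not a complete argument and the promised hypothesis check glosses over the one step the paper actually performs. On the first point: the elementary cone you describe produces the cofiber of the split epimorphism $L(C_1)\simeq D\oplus D'\twoheadrightarrow D$, which is $D'[1]$, and no single cone of a lift of that projection yields $D\oplus D[1]$; the phrase ``with the shifted copy absorbing the unwanted $D'$-part'' names the difficulty rather than resolving it, as you yourself acknowledge. The genuine swindle is what \cite[2.11]{khanlecture3} supplies, and since you fall back on that citation anyway, this portion of the proposal is harmless but not load-bearing.

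The substantive issue is the second point. The entire content of the paper's proof is precisely the hypothesis translation you deferred, and your stated list---``compact generation of $\calC$ and $\calC_-$, and filtered-colimit preservation by $\iota$''---is expressed purely in terms of the quotient piece $\calC_-$ and its inclusion $\iota$, whereas Khan's statement (like the classical Thomason--Neeman formulation) is phrased in terms of the kernel $\calC_+$ and the right adjoint $R$ to its inclusion. The paper makes the bridge explicit: $R$ exists by \cite[7.2.1.4]{lurie2018sag}, and the fiber sequence $R(C)\to C\to L(C)$ from \cite[7.2.0.2]{lurie2018sag}, together with the facts that $L$ preserves all colimits (being a left adjoint) and $\iota$ preserves filtered colimits (by hypothesis), shows that $R$ preserves filtered colimits. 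That single deduction is the whole proof. Your proposal never mentions $\calC_+$ or $R$, so the verification you promise would not succeed as stated without supplying this translation.
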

\begin{proof}
	By virtue of \cite[7.2.1.4]{lurie2018sag}, the inclusion functor $\calC_+\rightarrow \calC$ admits a right adjoint, which we denote by $R$. According to \cite[7.2.0.2]{lurie2018sag}, for each object $C\in \calC$, there is a fiber sequence $R(C)\rightarrow C \rightarrow L(C)$. Combing this with our assumption on $\iota$, we see that $R$ preserves small filtered colimits. The desired result now follows immediately from \cite[2.11]{khanlecture3}. 
\end{proof}

\begin{pg}
	The following ``Nisnevich excision" result will be useful in the proof of \ref{excision squares and twisted compact generation}:
\end{pg}

\begin{lemma}\label{Nisnevich excision of global section functor}
	Suppose we are given an excision square of quasi-geometric spectral algebraic stacks: 
$$
\Pull{U'}{X'}{U}{X.}{j'}{f'}{f}{j}
$$
Let $\calC$ be a prestable quasi-coherent stack on $X$. If $f$ is quasi-affine, then the commutative diagram of $\infty$-categories 
$$
\Pull{\QCoh(X; \calC)}{\QCoh(U; j^\ast \calC)}{\QCoh(X'; f^\ast \calC)}{\QCoh(U'; {j'}^\ast f^\ast \calC)}{}{}{}{}
$$
is a pullback square in $\Groth_\infty$.
\end{lemma}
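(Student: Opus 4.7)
The plan is to reduce the question to the case where the base $X$ is affine via faithfully flat \Cech descent, and there settle it by a recollement argument together with the defining excision condition. First I would choose a fiber smooth surjection $\pi : Y \to X$ with $Y$ affine, which exists by the definition of a quasi-geometric spectral algebraic stack, and form its \Cech nerve $Y_\bullet$. Since the diagonal of $X$ is quasi-affine, each $Y_n$ is quasi-affine, hence a quasi-geometric spectral Deligne--Mumford stack. Pulling back the given excision square along $\pi_n : Y_n \to X$ produces an excision square over $Y_n$: pullback squares, representable open immersions, representable \'etale morphisms, and quasi-affine morphisms are all stable under base change, and the complementary condition $K \times_X X' \simeq K$ pulls back to $(K \times_X Y_n) \times_{Y_n} (X' \times_X Y_n) \simeq (K \times_X X') \times_X Y_n \simeq K \times_X Y_n$ (after passing to reductions, noting that $\pi_n$ is flat), which gives the excision condition for the pulled-back square.

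By \ref{a description of global sections of quasi-coherent stacks}, each of the four global section $\infty$-categories appearing in the commutative square is canonically a $\Delta_s$-limit of the corresponding global sections over $Y_n$. Since the inclusion $\Groth_\infty^{\lex}\subseteq \Groth_\infty$ preserves small limits (see \ref{stability of global section}) and pullback squares in $\Groth_\infty$ commute with small limits, it suffices to verify the pullback square property for each $Y_n$ in place of $X$. Iterating the same \Cech descent, or directly applying a Zariski cover of the quasi-affine $Y_n$ by affines, we further reduce to the case where $X$ is affine.

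In the affine case $X = \Spec A$, the prestable quasi-coherent stack $\calC$ corresponds to a prestable $A$-linear $\infty$-category, $f : X' \to X$ is quasi-affine \'etale, and $U \subseteq X$ is quasi-compact open. The open immersion $j$ induces a semi-orthogonal decomposition $\QCoh(X; \calC) = \langle \QCoh_{X-U}(X; \calC), \QCoh(U; j^\ast \calC)\rangle$, and similarly for $X'$. The desired pullback property is thus equivalent to $f^\ast$ inducing an equivalence on the complement-supported subcategories $\QCoh_{X-U}(X; \calC) \simeq \QCoh_{X'-U'}(X'; f^\ast \calC)$. This equivalence in turn follows from the excision condition $K \times_X X' \simeq K$ together with the existence of a pushforward $f_\ast$ on global sections (supplied by the quasi-affineness of $f$): on objects supported on $K$, the unit $M \to f_\ast f^\ast M$ and the counit $f^\ast f_\ast N \to N$ restrict to equivalences over $U$, where both sides vanish, and over the reduced complement $K$, by the excision condition and standard base change. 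I expect the main obstacle to be cleanly executing this affine case in the prestable, rather than fully stable, setting, since the relevant semi-orthogonal decomposition and pushforward tools in the existing literature are most developed in the stable context and require some adaptation to the prestable framework.
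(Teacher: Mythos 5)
Your reduction to the (quasi-)affine case via the \v{C}ech nerve of a fiber smooth surjection $\pi:Y\to X$ is exactly the paper's first step (via \ref{a description of global sections of quasi-coherent stacks}); pulling back the excision square along the flat maps $Y_n\to X$ and using that pullbacks in $\Groth_\infty$ commute with $\Delta_s$-limits is the right move, and your observations that excision squares are stable under flat base change are correct. Where the two arguments diverge is the endgame. The paper, having reduced to $X$ and $X'$ quasi-affine spectral Deligne--Mumford stacks, simply invokes \cite[10.2.3.1, 10.2.4.2]{lurie2018sag}: the first is precisely the Nisnevich excision statement for global sections of \emph{prestable} quasi-coherent stacks along a quasi-affine \'etale morphism of spectral Deligne--Mumford stacks, and the second handles the quasi-affine pullback. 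You instead try to re-derive this from scratch via a semi-orthogonal-decomposition / recollement argument.

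The issue you flag yourself is a real gap, not merely an inconvenience. The pair $(\QCoh_{X-U}(X;\calC),\QCoh(U;j^\ast\calC))$ giving a semi-orthogonal decomposition, the translation of the pullback property into an equivalence on complement-supported subcategories, and the unit/counit bookkeeping for $f^\ast\dashv f_\ast$ on the closed complement are all machinery that the paper and \cite{lurie2018sag} develop for \emph{stable} $\infty$-categories (\cite[7.2.0.1, 7.2.1.7]{lurie2018sag}); none of this is available off the shelf for a Grothendieck prestable $\QCoh(X;\calC)\in\Groth_\infty$, and passing to stabilizations does not immediately help because the statement to be proved concerns the prestable categories themselves, and stabilization is not a conservative or limit-preserving procedure in the way one would need. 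So your sketch of the affine case is, as written, a restatement of what must be proved rather than a proof. The fix is to replace your hand-rolled recollement argument with the citation to \cite[10.2.3.1]{lurie2018sag}, which is exactly the prestable Nisnevich excision you need, together with \cite[10.2.4.2]{lurie2018sag} to handle the quasi-affine $f$; alternatively, one would have to develop the prestable analogue of the open-closed decomposition from first principles (essentially redoing a chunk of SAG Chapter 10), which is considerably more work than the two-line reduction the paper uses.
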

\begin{proof}
	We can use \ref{a description of global sections of quasi-coherent stacks} to reduce to the case where $X$ and $Y$ are quasi-affine spectral Deligne-Mumford stacks. In this case, the desired result follows from \cite[10.2.3.1]{lurie2018sag} and \cite[10.2.4.2]{lurie2018sag}.
\end{proof}

\begin{proposition}\label{excision squares and twisted compact generation}
	Suppose we are given an excision square of quasi-geometric spectral algebraic stacks
$$
\Pull{U'}{X'}{U}{X,}{j'}{f'}{f}{j}
$$
where $f$ is quasi-affine. If $X'$ and $U$ are of twisted compact generation, then so is $X$. 
\end{proposition}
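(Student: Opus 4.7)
The plan is to reduce the compact generation of $\QCoh(X;\calC)$, for a compactly generated stable quasi-coherent stack $\calC$ on $X$, to the compact generation of the two pieces in the semi-orthogonal decomposition of $\QCoh(X;\calC)$ attached to the open immersion $j:U\rightarrow X$, namely $\QCoh_{X-U}(X;\calC)$ and $\QCoh(U;j^\ast\calC)$.

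First I would handle the ``closed'' piece via excision. Using that pullback of quasi-coherent stacks is symmetric monoidal together with the identification $X'\times_XU\simeq U'$ coming from $\sigma$ being a pullback square, one obtains $f^\ast\calC_{X-U}\simeq (f^\ast\calC)_{X'-U'}$ for the stack $\calC_{X-U}$ of \ref{the unit object of QStk}. For any point $\eta:\Spec R\rightarrow X$ that factors through $U$ (or through $U'$), the corresponding ideal $I_\eta\subseteq \pi_0R$ from \ref{the unit object of QStk} has empty vanishing locus, so $I_\eta$ is the unit ideal and the category of $I_\eta$-nilpotent objects is zero; hence $j^\ast\calC_{X-U}\simeq 0$ and ${j'}^\ast f^\ast\calC_{X-U}\simeq 0$. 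Applying \ref{Nisnevich excision of global section functor} to $\calC_{X-U}$ (here we use that $f$ is quasi-affine), the resulting pullback square collapses to an equivalence
\begin{equation*}
f^\ast:\QCoh(X;\calC_{X-U})\longrightarrow \QCoh(X';(f^\ast\calC)_{X'-U'}).
\end{equation*}
Combining this with \ref{quasi-coherent stacks and support} applied on both sides yields an equivalence
\begin{equation*}
\QCoh_{X-U}(X;\calC)\simeq \QCoh_{X'-U'}(X';f^\ast\calC).
\end{equation*}

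Next I would verify compact generation of each piece. Since $f^\ast\calC$ is a compactly generated stable quasi-coherent stack on $X'$ (pullback preserves compact generation), the stack $(f^\ast\calC)_{X'-U'}$ is also stable and compactly generated by the discussion in \ref{the unit object of QStk}; hence $X'$ being of twisted compact generation forces $\QCoh(X';(f^\ast\calC)_{X'-U'})$, and therefore $\QCoh_{X-U}(X;\calC)$, to be compactly generated. Similarly, $U$ being of twisted compact generation forces $\QCoh(U;j^\ast\calC)$ to be compactly generated.

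Finally I would glue the two pieces following the argument of \ref{compact objects with prescribed support}. The functor $j^\ast:\QCoh(X;\calC)\rightarrow \QCoh(U;j^\ast\calC)$ admits a right adjoint $j_\ast$ by \ref{pullback square and right adjointability}, and \ref{compact pullback functor} implies that $j^\ast$ is compact, so that $j_\ast$ preserves filtered colimits. As in \ref{compact objects with prescribed support}, the pair $(\QCoh_{X-U}(X;\calC),\QCoh(U;j^\ast\calC))$ determines a semi-orthogonal decomposition of $\QCoh(X;\calC)$, and the right adjoint to the inclusion of the closed piece preserves filtered colimits; hence compact generators of each piece remain compact in $\QCoh(X;\calC)$, and their union forms a set of compact generators for $\QCoh(X;\calC)$. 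The main obstacle is the first step: verifying that the Nisnevich excision square for $\calC_{X-U}$ collapses, i.e.\ matching up the definitions of $(-)_{X-U}$ under pullback and confirming the vanishing of $j^\ast\calC_{X-U}$ and ${j'}^\ast f^\ast\calC_{X-U}$; once this excision-type identification of the closed piece is in hand, the remainder is a formal semi-orthogonal gluing argument.
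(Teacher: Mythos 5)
Your first two steps are sound and match the paper: identifying $\QCoh_{X-U}(X;\calC)\simeq \QCoh_{X'-U'}(X';f^\ast\calC)$ via the collapse of the Nisnevich excision square for $\calC_{X-U}$ is a clean way to obtain step 3 of the paper's proof, and the compact generation of both pieces follows as you say. The gap is in the final gluing step, and it is exactly where the real work happens.

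You write that since $j_\ast$ preserves filtered colimits, ``compact generators of each piece remain compact in $\QCoh(X;\calC)$.'' This is correct for the closed piece $\QCoh_{X-U}(X;\calC)$: the right adjoint $R$ to its inclusion preserves filtered colimits (via the fiber sequence $R(P)\to P \to j_\ast j^\ast P$), so the inclusion preserves compacts. But for the open piece the implication runs the wrong way: $j_\ast$ preserving filtered colimits implies that its \emph{left} adjoint $j^\ast$ preserves compacts (\cite[5.5.7.2]{MR2522659}), not that $j_\ast$ itself does. In fact $j_\ast$ generically destroys compactness for open immersions, and this is precisely the classical Thomason--Trobaugh obstruction. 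So the union you propose is not a set of compact objects of $\QCoh(X;\calC)$, and the argument breaks.

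The missing ingredient is the Thomason--Neeman localization theorem (in the form \ref{a special case of [2.11]{khanlecture3}}): one must show that compact objects of $\QCoh(U;j^\ast\calC)$ lift, up to the summand $N\oplus N[1]$, to compact objects of $\QCoh(X;\calC)$. The paper establishes this by characterizing compactness in $\QCoh(X;\calC)$ via $j^\ast$ and $f^\ast$ through the excision square \ref{Nisnevich excision of global section functor}, reducing the lifting problem to the primed side, and then invoking \ref{a special case of [2.11]{khanlecture3}} for the semi-orthogonal decomposition of $\QCoh(X';f^\ast\calC)$ (which is available because $X'$ is of twisted compact generation). Only after producing, for each compact $N\in\QCoh(U;j^\ast\calC)$, a compact $M\in\QCoh(X;\calC)$ with $j^\ast M\simeq N\oplus N[1]$ can one run the generation argument with the fiber sequence $R(M'')\to M''\to j_\ast j^\ast M''$; your proof skips this entirely.
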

\begin{proof}
	Let $\calC \in \QStk^{\PSt}(X)$ be compactly generated stable; we wish to show that $\QCoh(X; \calC)$ is compactly generated. For this, we proceed as in the proofs of \cite[6.20]{lurie2011dagxi} and \cite[6.13]{MR3190610}. We first claim that the collection of objects of the form $j^\ast M$, where $M$ is a compact object of $\QCoh(X; \calC)$, is a set of compact generators for $\QCoh(U; j^\ast \calC)$ (note that $j^\ast M$ is compact by virtue of \ref{compact pullback functor}). Using the compact generation of $\QCoh(U; j^\ast \calC)$ (because $U$ is of twisted compact generation), it suffices to prove that for each compact object $N \in \QCoh(U; j^\ast \calC)$, there exists a compact object $M\in \QCoh(X; \calC)$ such that $j^\ast M \simeq N\oplus N[1]$. By virtue of \ref{compact pullback functor} and \ref{Nisnevich excision of global section functor}, we observe that an object $M\in \QCoh(X; \calC)$ is compact if and only if both $j^\ast M$ and $f^\ast M$ are compact; consequently, in order to lift $N\oplus N[1]$ to a compact object of $\QCoh(X; \calC)$, it suffices to lift ${f'}^\ast(N\oplus N[1])$ to a compact object of $\QCoh(X'; f^\ast \calC)$. Since $j'^\ast$ admits a fully faithful right adjoint $j'_\ast$ \ref{pullback square and right adjointability}, it follows from \cite[7.2.1.7]{lurie2018sag} that the pair of subcategories $(\QCoh_{X'-U'}(X'; f^\ast \calC), \QCoh(U'; {j'}^\ast f^\ast \calC))$ determine a semi-orthogonal decomposition of $\QCoh(X'; f^\ast \calC)$. Combining the assumption that $X'$ is of twisted compact generation with \ref{quasi-coherent stacks and support} and \ref{open immersion and twisted compact generation}, we deduce that $\QCoh(X'; f^\ast \calC)$ and the two subcategories are compactly generated. Since $j'_\ast$ preserves small filtered colimits \ref{compact pullback functor}, the desired lifting follows from the Thomason--Neeman localization theorem \cite[2.11]{khanlecture3} (see \ref{a special case of [2.11]{khanlecture3}}).

On the other hand, \ref{Nisnevich excision of global section functor} guarantees that the pullback functor $f^\ast$ induces an equivalence of $\infty$-categories $\QCoh_{X-U}(X; \calC) \rightarrow \QCoh_{X'-U'}(X'; f^\ast \calC)$, so that $\QCoh_{X-U}(X; \calC)$ is also compactly generated. Let $\{M'_\alpha \}_{\alpha \in A}$ be the collection of compact objects of $\QCoh_{X-U}(X; \calC)$. As in the case of $\QCoh(X'; f^\ast \calC)$, the pair $(\QCoh_{X-U}(X; \calC), \QCoh(U; j^\ast \calC))$ is a semi-orthogonal decomposition of $\QCoh(X; \calC)$. Since $j_\ast$ preserves small filtered colimits, it follows from the proof of \ref{a special case of [2.11]{khanlecture3}} that the inclusion $\QCoh_{X-U}(X; \calC)\subseteq \QCoh(X; \calC)$ admits a right adjoint $R$ which preserves small filtered colimits, and therefore each $M'_\alpha \in \QCoh_{X-U}(X; \calC)$ is compact as an object of $\QCoh(X; \calC)$ as well. To complete the proof, we will show that the collection of objects of the form $M\oplus M'_\alpha \in \QCoh(X; \calC)$, where $M \in \QCoh(X; \calC)$ is compact, is a set of compact generators for $\QCoh(X; \calC)$: let $M'' \in \QCoh(X; \calC)$ and suppose that $\Ext^\ast_{\QCoh(X; \calC)}(M\oplus M'_\alpha, M'')=0$ for each compact object $M\in \QCoh(X;\calC)$ and each $M'_\alpha$. Since the collection $\{M'_\alpha \}$ is a set of compact generators for $\QCoh_{X-U}(X; \calC)$, we see that $RM'' \simeq 0$. It then follows from the fiber sequence $RM''\rightarrow M'' \rightarrow j_\ast j^\ast M''$ (see \cite[7.2.0.2]{lurie2018sag}) that the unit map $M''\rightarrow j_\ast j^\ast M''$ is an equivalence. Combining this with the above discussion of the collection $\{j^\ast M\}$, we deduce that $j^\ast M'' \simeq 0$, hence $M''\simeq 0$ as desired.
\end{proof}

\begin{pg}
	We now provide a proof of our main result of this section. Our basic strategy is analogous to the proof of \cite[Theorem A]{MR3705292}. 
\end{pg}

\begin{proof}[Proof of \emph{\ref{twisted compact generation of spectral algebraic stacks}}]
	In view of the special presentation of \ref{a presentation for applying quasi-finite devissage} and the stacky scallop decomposition induced by $p$ (see \ref{stacky scallop decomposition induced by a Nisnevich covering}), we can use the descent results \ref{finite morphisms and twisted compact generation} and \ref{excision squares and twisted compact generation} (note that each morphism $W_m\rightarrow U_m$ appearing in the proof of \ref{stacky scallop decomposition induced by a Nisnevich covering} is quasi-affine by virtue of the separatedness of $p$ and \cite[3.3.0.2]{lurie2018sag}) along with \ref{open immersion and twisted compact generation} to reduce to the case of quasi-affine spectral Deligne-Mumford stacks, in which case the desired result follows from \ref{affines are of twisted compact generation} and \ref{open immersion and twisted compact generation} (see also \cite[2.4.2.3]{lurie2018sag}).
\end{proof}

\begin{pg}
	We close this section by mentioning another consequence of the special presentation of \ref{a presentation for applying quasi-finite devissage} and the descent results \ref{finite morphisms and twisted compact generation} and \ref{excision squares and twisted compact generation}. According to \cite[D.5.3.1]{lurie2018sag}, the property of being a compactly generated prestable $R$-linear $\infty$-category is local for the \'etale topology, where $R$ is a connective $\bbE_\infty$-ring. In the stable case, we have the following stronger assertion, whose proof is immediate from the proofs of \ref{finite morphisms and twisted compact generation} and \ref{excision squares and twisted compact generation} by considering \ref{a presentation for applying quasi-finite devissage} and \ref{stacky scallop decomposition induced by a Nisnevich covering}:
\end{pg}

\begin{theorem}\label{compact generation is local for the quasi-finite topology}
	Let $X$ be a quasi-geometric spectral algebraic stack and let $\calC$ be a stable quasi-coherent stack on $X$. If there exists a locally quasi-finite, faithfully flat, and locally almost of finite presentation of quasi-geometric spectral algebraic stacks $f:\Spec A \rightarrow X$ for which $f^\ast \calC$ is compactly generated, then $\calC$ is compactly generated. 
\end{theorem}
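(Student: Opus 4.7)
The plan is to replay the bootstrap underlying the proof of \ref{twisted compact generation of spectral algebraic stacks}, but applied to the single quasi-coherent stack $\calC$ rather than to all compactly generated stable quasi-coherent stacks on $X$. Since the given $f:\Spec A\to X$ witnesses a quasi-finite presentation of $X$, I first invoke \ref{a presentation for applying quasi-finite devissage} to obtain a separated Nisnevich covering $p:W\to X$ together with a representable, finite, faithfully flat, and locally almost of finite presentation morphism $q:\sfV\to W$, with $\sfV$ a quasi-affine spectral Deligne-Mumford stack. The construction in the proof of \ref{a presentation for applying quasi-finite devissage} realizes $\sfV$ as a clopen substack of $W\times_X\Spec A$, so composition with the projection to $\Spec A$ furnishes an \'etale morphism $h:\sfV\to \Spec A$ and a canonical equivalence $(pq)^\ast\calC \simeq h^\ast f^\ast\calC$. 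Because \'etale pullback of stable linear $\infty$-categories preserves compact generation (a direction of \cite[D.5.3.1]{lurie2018sag}), $(pq)^\ast\calC$ is compactly generated on $\sfV$.

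Next I descend along $q$. Since $\sfV$ is quasi-affine, \ref{affines are of twisted compact generation} together with \ref{open immersion and twisted compact generation} shows that $\sfV$ is of twisted compact generation, so $\QCoh(\sfV;(pq)^\ast\calC)$ is compactly generated. I then repeat the argument of \ref{finite morphisms and twisted compact generation} with $p^\ast\calC$ in place of a generic compactly generated stable stack: the hypotheses on $q$ force $q_\ast\calO_{\sfV}$ to be dualizable in $\QCoh(W)$, so $q^\ast:\QCoh(W;p^\ast\calC)\to \QCoh(\sfV;q^\ast p^\ast\calC)$ admits a left adjoint $q_+$, and combining this with the conservativity of $q^\ast$ from \ref{a description of global sections of quasi-coherent stacks} and \cite[6.2]{lurie2011dagxi} yields compact generation of $\QCoh(W;p^\ast\calC)$. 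Crucially, this step only requires compact generation of the global sections of one specific stack, rather than twisted compact generation of $W$ itself.

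Finally I propagate from $W$ to $X$ via the stacky scallop decomposition $\emptyset\simeq U_{n+1}\hookrightarrow\cdots\hookrightarrow U_0\simeq X$ induced by $p$ through \ref{stacky scallop decomposition induced by a Nisnevich covering}; the separatedness of $p$ together with \cite[3.3.0.2]{lurie2018sag} ensures that the \'etale arrow $W_m\to U_m$ in each associated excision square is quasi-affine. For each $m$, restricting $q$ to the quasi-compact opens $W_m\subseteq W$ and $V_m = U_{m+1}\times_{U_m}W_m \subseteq W_m$ keeps the restricted morphisms finite, faithfully flat, locally almost of finite presentation, and with quasi-affine source, while restricting $(pq)^\ast\calC$ preserves its compact generation; the two-step argument of the preceding paragraphs therefore yields compact generation of $\QCoh(W_m;p^\ast\calC|_{W_m})$ and $\QCoh(V_m;\calC|_{V_m})$ for every $m$. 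A downward induction on $m$, applying the proof of \ref{excision squares and twisted compact generation} to each excision square, then produces compact generation of $\QCoh(U_m;\calC|_{U_m})$, culminating at $m=0$ in the desired compact generation of $\calC$. The main obstacle is to verify that the proofs of \ref{finite morphisms and twisted compact generation} and \ref{excision squares and twisted compact generation} genuinely go through once twisted compact generation is relaxed to compact generation of the global sections of a single fixed stack: in particular the excision argument invokes compact generation for several auxiliary stacks such as $(p^\ast\calC)_{X-U}$ of \ref{the unit object of QStk}, and these must be handled by parallel bootstraps that keep $\calC$ and its derived companions in view throughout.
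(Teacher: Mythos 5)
Your overall strategy—reduce to the presentation of \ref{a presentation for applying quasi-finite devissage}, descend along the finite morphism $q$ via dualizability of $q_\ast\calO_{\sfV}$, and then climb the stacky scallop decomposition—is the shape that the paper is hinting at. But there are two issues that keep the argument from being complete. First, the theorem's conclusion is that \emph{$\calC$ is compactly generated as a quasi-coherent stack}, i.e., that $\calC_\eta$ is a compactly generated $R$-linear $\infty$-category for every point $\eta:\Spec R\to X$; that is the sense in which the result generalizes \cite[D.5.3.1]{lurie2018sag}, which is a statement about $R$-linear categories. Your induction terminates with compact generation of $\QCoh(X;\calC)$ and you identify this with ``compact generation of $\calC$'' without argument. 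These two statements are in fact equivalent here, but the direction you need (from $\QCoh(X;\calC)$ compactly generated to pointwise compact generation) requires a separate step: pull back the compact generators along a fiber smooth affine cover $g\colon\Spec B\to X$ using \ref{compact pullback functor} together with \cite[6.2]{lurie2011dagxi} to get $\calC_B$ compactly generated, and then use \'etale-local sections of fiber smooth surjections plus \cite[D.5.3.1]{lurie2018sag} to propagate to an arbitrary $\eta$. Second, the auxiliary-stack issue you flag at the end is a genuine obstruction: the excision argument of \ref{excision squares and twisted compact generation} invokes \ref{a special case of [2.11]{khanlecture3}}, which requires \emph{both} pieces of the semi-orthogonal decomposition to be compactly generated, including $\QCoh(W_m;(\calC|_{W_m})_{W_m-V_m})$. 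You defer this without resolving it. It is resolvable (pull the auxiliary stack back along the finite cover $q|_{q^{-1}W_m}$, observe it is still compactly generated on the quasi-affine $q^{-1}W_m$ because $\otimes\,\calQ_{W_m-V_m}$ preserves compact generation of stacks by \ref{the unit object of QStk}, invoke \ref{affines are of twisted compact generation}/\ref{open immersion and twisted compact generation} for the global sections there, then descend along $q$ as before), but as written this is a gap.

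Both issues can be sidestepped by running the d\'evissage pointwise rather than at the level of global sections, which is arguably what makes the proof ``immediate.'' Fix $\eta\colon\Spec R\to X$. Pulling back $q\colon\sfV\to W$ along a point $\Spec R'\to W\times_X\Spec R$ yields a finite, faithfully flat, lafp ring map $R'\to B$ (the target is affine because $q$ is finite), and $\calC_\eta\otimes_{R}B$ is compactly generated because $(pq)^\ast\calC\simeq h^\ast f^\ast\calC$ is; the dualizability argument from the proof of \ref{finite morphisms and twisted compact generation} then gives $\calC_\eta\otimes_R R'$ compactly generated. Since $p$ is a separated Nisnevich covering and hence a surjective representable \'etale morphism, $W\times_X\Spec R\to\Spec R$ is quasi-affine \'etale surjective, so a finite affine open cover supplies a faithfully flat \'etale map $R\to\prod B_i$ with $\calC_\eta\otimes_R\prod B_i$ compactly generated, and \cite[D.5.3.1]{lurie2018sag} concludes that $\calC_\eta$ is compactly generated. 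This reaches the asserted conclusion directly, needs only the étale locality of \cite[D.5.3.1]{lurie2018sag} rather than the full excision machinery of \ref{excision squares and twisted compact generation}, and eliminates the auxiliary-stack bookkeeping.
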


\section{Brauer Spaces and Azumaya Algebras}\label{Sec: Brauer Spaces and Azumaya Algebras}
	Our goal in this section is to give a proof of our main result \ref{Brauer spaces and Azumaya algebras for quasi-geometric spectral algebraic stacks}, and to provide an explicit description of the homotopy groups of the extended Brauer sheaf of a quasi-geometric stack. 
	
\begin{pg}
	Let $R$ be a connective $\bbE_\infty$-ring. According to \cite[11.5.3.1]{lurie2018sag}, an $\bbE_1$-algebra $A$ over $R$ is called the \emph{Azumaya algebra over $R$} if it is a compact generator of $\Mod_R$ and the natural map $A\otimes_R A^{\rev} \rightarrow \End_R(A)$ induced by the left and right actions of $A$ on itself is an equivalence. In the setting of derived algebraic geometry, the property of being a derived Azumaya algebra (see \cite[2.1]{MR2957304}) is local for the flat topology \cite[2.3]{MR2957304}. We have the following spectral analogue, which can be proven by exactly the same argument:
\end{pg}

\begin{lemma}\label{Azumaya algebras are local for the flat topology}
	Let $R$ be a connective $\bbE_\infty$-ring and let $A$ be an $\bbE_1$-algebra over $R$. Then the condition that $A$ is an Azumaya algebra is stable under base change \emph{\cite[6.2.5.1]{lurie2018sag}} and local for the flat topology \emph{\cite[2.8.4.1]{lurie2018sag}}.
\end{lemma}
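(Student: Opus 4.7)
The plan is to verify the two assertions---base change stability and flat descent---by unpacking the two conditions that define an Azumaya algebra, namely the compact generation of $\Mod_R$ by $A$ and the equivalence $\mu_A: A \otimes_R A^{\rev} \to \End_R(A)$. Throughout, the crucial point is that once $A$ is known to be a compact generator of $\Mod_R$, it is in particular perfect (see \cite[7.2.4.2]{lurie2017ha}), and hence dualizable over $R$; this ensures that the formation of $\End_R(A)$ commutes with arbitrary base change.

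For base change stability, let $R \to R'$ be a morphism in $\CAlg^{\cn}$ and set $A' = A \otimes_R R'$. Perfectness of $A$ over $R$ immediately gives perfectness of $A'$ over $R'$, and compact generation of $\Mod_{R'}$ by $A'$ follows from compact generation of $\Mod_R$ by $A$ together with the base change functor $\Mod_R \to \Mod_{R'}$ being symmetric monoidal and preserving compact generators (cf.\ \ref{adjunction and a set of compact generators}). To handle the multiplication map, observe that $A' \otimes_{R'} (A')^{\rev} \simeq (A \otimes_R A^{\rev}) \otimes_R R'$ and, since $A$ is dualizable over $R$, $\End_{R'}(A') \simeq \End_R(A) \otimes_R R'$; moreover $\mu_{A'}$ is naturally identified with $\mu_A \otimes_R R'$, so $\mu_{A'}$ is an equivalence because $\mu_A$ is.

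For flat descent, suppose $R \to R'$ is a faithfully flat morphism of connective $\bbE_\infty$-rings and that $A' := A \otimes_R R'$ is an Azumaya algebra over $R'$. I first show that $A$ is perfect over $R$, which follows because perfectness of modules satisfies faithfully flat descent (see \cite[2.8.4.2]{lurie2018sag}). Next, to verify that $A$ is a generator of $\Mod_R$, suppose $M \in \Mod_R$ satisfies $\Map_{\Mod_R}(A, M) \simeq 0$; then, using perfectness of $A$ over $R$, one gets $\Map_{\Mod_{R'}}(A', M \otimes_R R') \simeq \Map_{\Mod_R}(A, M) \otimes_R R' \simeq 0$, so $M \otimes_R R' \simeq 0$ by generation of $\Mod_{R'}$ by $A'$, and consequently $M \simeq 0$ by faithful flatness of $R \to R'$. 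Finally, to verify that $\mu_A$ is an equivalence, observe that it suffices to check this after tensoring with the faithfully flat extension $R'$ (which detects equivalences of $R$-modules); invoking the base change identities from the previous paragraph identifies $\mu_A \otimes_R R'$ with $\mu_{A'}$, which is an equivalence by hypothesis.

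The only step requiring genuine care is the compatibility $\End_R(A) \otimes_R R' \simeq \End_{R'}(A')$, which depends on $A$ being dualizable (equivalently, perfect) over $R$; this is the reason one must first promote the hypothesis from compact generation of $\Mod_{R'}$ by $A'$ to perfectness of $A$ over $R$ via faithfully flat descent for perfect modules before analyzing $\mu_A$. Everything else is formal manipulation with the base change and colimit-preserving properties of $\Mod_{(-)}$.
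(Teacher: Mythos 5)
Your proof is correct and follows essentially the same approach the paper intends: the paper cites To\"en's argument \cite[2.3]{MR2957304} and asserts it carries over verbatim to the spectral setting, and your write-out is precisely that argument---first establishing perfectness (hence dualizability) of $A$ so that the formation of $\End_R(A)$ commutes with base change, then checking the compact-generation and multiplication-map conditions separately for base change and for faithfully flat descent. The only stylistic point worth noting is that where you write $\Map_{\Mod_R}(A,M) \otimes_R R'$ you should really mean the $R$-linear mapping object (internal Hom or mapping spectrum), not the mapping space, since it is the former that you can tensor over $R$ and whose vanishing is detected faithfully flat locally; this is a notational slip rather than a gap.
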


\begin{pg}
	Before giving our proof of the main result \ref{Brauer spaces and Azumaya algebras for quasi-geometric spectral algebraic stacks}, we need to recall a bit of terminology. Let $X: \CAlg^{\cn} \rightarrow \widehat{\SSet}$ be a functor. Let $\QStk^{\cg}(X)$ denote the subcategory of $\QStk^{\St}(X)$ whose objects are compactly generated stable quasi-coherent stacks and whose morphisms are compact morphisms of quasi-coherent stacks of \cite[10.1.3.1]{lurie2018sag}. Note that it inherits a symmetric monoidal structure from $\QStk^{\St}(X)$; see \cite[11.4.0.1]{lurie2018sag}. According to \cite[11.5.2.1]{lurie2018sag}, the \emph{extended Brauer space} $\sBr^\dagger(X)\subseteq \QStk^{\cg}(X)^\simeq$ of $X$ is defined to be the full subcategory spanned by the invertible objects of $\QStk^{\cg}(X)$ (here $\QStk^{\cg}(X)^\simeq$ denotes the largest Kan complex contained in $\QStk^{\cg}(X)$), and the \emph{extended Brauer group} $\Br^\dagger(X)$ of $X$ is defined to be the set $\pi_0\sBr^\dagger(X)$. As in \cite[11.5.2.11]{lurie2018sag}, we denote the full subcategory of $ \QCoh(X)^\simeq$ spanned by the invertible objects of $\QCoh(X)$ by $\sPic^\dagger(X)$ and refer to it as the \emph{extended Picard space} of $X$. Using the symmetric monoidal structures on $\QStk^{\St}(X)$ and $\QCoh(X)$, we may regard $\sBr^\dagger(X)$ and $\sPic^\dagger(X)$ as grouplike commutative monoid objects of the $\infty$-category $\widehat{\SSet}$. 

	According to \cite[11.5.3.7]{lurie2018sag}, an associative algebra object $\calA$ of $\QCoh(X)$ is called the \emph{Azumaya algebra} if, for every morphism $\eta: \Spec R \rightarrow X$ where $R$ is a connective $\bbE_\infty$-ring, $\eta^\ast \calA \in \Alg_R$ is an Azumaya algebra over $R$. For each Azumaya algebra $\calA\in \Alg(\QCoh(X))$, it follows from \cite[11.5.3.9]{lurie2018sag} that the stable quasi-coherent stack on $X$ given by the formula $(\eta:\Spec R \rightarrow X)\mapsto (\RMod_{\eta^\ast \calA}\in \LinCat^{\St}_R)$ determines an object of the extended Brauer space $\sBr^\dagger(X)$. We refer to the equivalence class of this quasi-coherent stack as the \emph{extended Brauer class} of $\calA$ and denote it by $[\calA]\in \sBr^\dagger(X)$.
\end{pg}

\begin{pg}
	We are now ready to prove our main result which extends \cite[11.5.3.10]{lurie2018sag} (which asserts that if $\sfX$ is a quasi-compact quasi-separated spectral algebraic space, then every object of $\Br^\dagger(\sfX)$ has the form $[\calA]$ for some Azumaya algebra $\calA \in \Alg(\QCoh(\sfX))$) to quasi-geometric spectral algebraic stacks which admit a quasi-finite presentation (see \ref{a quasi-finite presentation}). The main ingredient in the proof of \cite[11.5.3.10]{lurie2018sag} is \cite[10.3.2.1]{lurie2018sag} which shows that $\QCoh(\sfX; \calC)$ is compactly generated for each compactly generated prestable quasi-coherent stack $\calC$ on $\sfX$. In our case of interest, we will closely follow the proof of \cite[11.5.3.10]{lurie2018sag}, using \ref{twisted compact generation of spectral algebraic stacks} in place of \cite[10.3.2.1]{lurie2018sag}.
\end{pg}

\begin{proof}[Proof of \emph{\ref{Brauer spaces and Azumaya algebras for quasi-geometric spectral algebraic stacks}}]
	Let $u \in \Br^\dagger(X)$ be an element, and choose an invertible object $\calC$ of $\QStk^{\cg}(X)$ which represents $u$. By virtue of \ref{twisted compact generation of spectral algebraic stacks}, $X$ is of twisted compact generation, so that $\QCoh(X; \calC)$ is compactly generated. Choose a set of compact generators $\{C_i\}_{i\in I}$ for $\QCoh(X; \calC)$. Choose a fiber smooth surjection $f:\Spec A \rightarrow X$, where $A$ is a connective $\bbE_\infty$-ring. Since $f$ is quasi-affine, $f^\ast: \QCoh(X; \calC) \rightarrow \QCoh(\Spec A; f^\ast \calC)$ admits a right adjoint $f_\ast$ (see \ref{pullback square and right adjointability}), and it follows from the proof of \ref{finite morphisms and twisted compact generation} that $\QCoh(\Spec A; f^\ast \calC)$ can be identified with $\mathrm{LMod}_{\calA}(\QCoh(Y; \calC))$, under which the pushforward $f_\ast$ corresponds to the forgetful functor $\mathrm{LMod}_{\calA}(\QCoh(Y; \calC)) \rightarrow \QCoh(Y; \calC)$. In particular, $f_\ast$ is conservative. Combining this observation with the fact that $f^\ast$ is compact (see \ref{compact pullback functor}), we deduce that $\{f^\ast C_i\}_{i\in I}$ is a set of compact generators for $f^\ast \calC$ (see \ref{adjunction and a set of compact generators}). Using \cite[11.5.2.5]{lurie2018sag}, we see that $f^\ast \calC$ is smooth over $A$, and therefore the proof of \cite[11.3.2.4]{lurie2018sag} guarantees that there exists a finite subset $I_0\subset I$ such that the pullback of $C=\oplus_{i\in I_0}C_i$ along $f$ is a compact generator of $f^\ast \calC$. Let $\calE\in \Alg(\QCoh(X))$ denote the endomorphism algebra of $C$ (here we regard $\QCoh(X; \calC)$ as tensored over $\QCoh(X)$), and let $\calC'$ be the stable quasi-coherent stack on $X$, given by the formula $(\eta:\Spec R \rightarrow X)\mapsto (\RMod_{{\eta}^\ast\calE}\in \LinCat^{\St}_R)$. Consider the morphism of quasi-coherent stacks $F:\calC'\rightarrow \calC$ determined by the operation $\bullet \otimes_{\calE}C$. We will complete the proof by showing that the functor $F$ is an equivalence and that $\calE$ is an Azumaya algebra on $X$. By virtue of \ref{Azumaya algebras are local for the flat topology} and \cite[D.4.1.6]{lurie2018sag} (see also \cite[11.2.3.3]{lurie2018sag}), it will suffice to show the assertion after pulling back along $f$. Invoking the fact that $f^\ast C$ is a compact generator of $f^\ast \calC$, we deduce that $f^\ast F$ is an equivalence of $A$-linear $\infty$-categories (see \cite[7.1.2.1]{lurie2017ha}). Combining this observation with the fact that $\calC$ is invertible and \cite[11.5.3.4]{lurie2018sag}, we conclude that $f^\ast\calE$ is an Azumaya algebra over $A$ as desired.
\end{proof}

\begin{pg}
	The remainder of this section is devoted to describing the homotopy groups of the extended Brauer sheaf $\underline{\sBr}^\dagger(X)$ where $X$ is a quasi-geometric stack.
\end{pg}

\begin{definition}
	Let $X$ be a quasi-geometric stack. Let $\CAlg^{\cn}_X\rightarrow \CAlg^{\cn}$ be a left fibration classified by $X$ (so that an object of $\CAlg^{\cn}_X$ can be identified with a pair $(A, \eta)$, where $A$ is a connective $\bbE_\infty$-ring and $\eta\in X(A)$ is an $A$-valued point of $X$). The $\infty$-category $(\CAlg^{\cn}_X)^{\op}$ can be equipped with a Grothendieck topology which we refer to as \emph{the big \'etale topology}: a sieve on an object $(A,\eta)$ is a covering if it contains a finite collection of morphisms $\{(A,\eta)\rightarrow (A_i,\eta_i)\}_{1\leq i\leq n}$ for which the induced map $A\rightarrow \prod A_i$ is faithfully flat and \'etale. There is an induced Grothendieck topology on the opposite of the full subcategory $\CAlg^{\cn, \fpqc}_X \subseteq \CAlg^{\cn}_X$ spanned by those objects $(A, \eta)$ for which the corresponding morphism $\Spec A \rightarrow X$ is flat. We let $\Shv^{\fpqc-\et}_X\subseteq \Fun(\CAlg^{\cn, \fpqc}_X, \SSet)$ denote the full subcategory spanned by the sheaves on $(\CAlg^{\cn, \fpqc}_X)^{\op}$ and refer to it as the \emph{fpqc-\'etale $\infty$-topos} of $X$.
\end{definition}

\begin{remark}
	Let $\underline{\sBr}^\dagger_X, \underline{\sPic}^\dagger_X:\CAlg^{\cn, \fpqc}_X \rightarrow \SSet$ denote the functors given on objects by $(A,\eta)\mapsto \sBr^\dagger(A)$ and $\sPic^\dagger(A)$, respectively. They are fpqc-\'etale sheaves and factor through the $\infty$-category $\CAlg^{\gp}(\SSet)$ of grouplike $\bbE_\infty$-spaces (see \cite[5.2.6.6]{lurie2017ha}). By virtue of \cite[11.5.2.11]{lurie2018sag}, we have a canonical equivalence $\Omega \underline{\sBr}^\dagger_X\simeq \underline{\sPic}^\dagger_X$ in the $\infty$-category of $\CAlg^{\gp}(\SSet)$-valued fpqc-\'etale sheaves on $X$. We refer to $\underline{\sBr}^\dagger_X$ as the \emph{extended Brauer sheaf} of $X$.
\end{remark}

\begin{pg}
	There is an evident forgetful functor $\QCoh(X) \rightarrow \Fun(\CAlg^{\cn}_X, \SSet)$. More informally, it assigns to each $F\in \QCoh(X)$ a functor which carries a pair $(R, \eta) \in \CAlg^{\cn}_X$ to the $0$-th space $\Omega^\infty(F_\eta)$ of the underlying spectrum of the $R$-module $F_\eta$. By virtue of \cite[6.2.3.1]{lurie2018sag}, the forgetful functor factors through the $\infty$-category of big \'etale sheaves on $X$. For any integer $n\geq 0$ and any object $F\in \QCoh(X)$, let $\pi_nF$ denote the $n$-th homotopy group of the restriction of the underlying big \'etale sheaf of $F$ to $\Shv^{\fpqc-\et}_X$. We have the following analogue of \cite[11.5.5.3]{lurie2018sag}, which can be proven by exactly the same argument: 
\end{pg}

\begin{lemma}\label{homotopy sheaves of Brauer sheaf}
	Let $X$ be a quasi-geometric stack. Then the homotopy groups of $\underline{\sBr}^\dagger_X$ are given by
$$
\pi_n\underline{\sBr}^\dagger_X\simeq
\begin{cases}
0& \text{if $n=0$} \\
\underline{\mathbb{Z}} & \text{if $n=1$} \\
(\pi_0\calO_X)^\times & \text{if $n=2$} \\
\pi_{n-2}\calO_X& \text{if $n\geq 3$.} 
\end{cases}
$$
Here $\underline{\mathbb{Z}}$ denotes the constant sheaf associated to the abelian group $\mathbb{Z}$.
\end{lemma}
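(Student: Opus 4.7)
The plan is to adapt the proof of \cite[11.5.5.3]{lurie2018sag}, which establishes the analogous formula for quasi-compact quasi-separated spectral algebraic spaces; the paper's remark that the present lemma follows by \emph{exactly the same argument} indicates that the adaptation is largely formal. The key reduction uses the equivalence $\Omega\underline{\sBr}^\dagger_X\simeq \underline{\sPic}^\dagger_X$ of fpqc-\'etale sheaves of grouplike $\bbE_\infty$-spaces, which supplies canonical identifications $\pi_n\underline{\sBr}^\dagger_X\simeq \pi_{n-1}\underline{\sPic}^\dagger_X$ for every $n\geq 1$. This reduces the cases $n\geq 1$ to a computation of the homotopy sheaves of the extended Picard sheaf, while the case $n=0$ is handled separately by a local triviality argument.

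For the Picard sheaf, the analysis proceeds pointwise. For a pair $(A,\eta)\in \CAlg^{\cn,\fpqc}_X$, we have $\underline{\sPic}^\dagger_X(A,\eta)\simeq \sPic^\dagger(A)$, and taking the unit $A\in \sPic^\dagger(A)$ as basepoint, the loop space identifies with the space of units $\mathrm{GL}_1(A)\subseteq \Omega^\infty A$. Since $\pi_0\mathrm{GL}_1(A)\simeq (\pi_0A)^\times$ and $\pi_k\mathrm{GL}_1(A)\simeq \pi_kA$ for $k\geq 1$, this gives $\pi_1\sPic^\dagger(A)\simeq (\pi_0A)^\times$ and $\pi_k\sPic^\dagger(A)\simeq \pi_{k-1}A$ for $k\geq 2$. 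Moreover, once $A$ is local the connected components of $\sPic^\dagger(A)$ are exhausted by the shifts $\Sigma^nA$, so the fpqc-\'etale sheafification of $\pi_0\sPic^\dagger$ is the constant sheaf $\underline{\mathbb{Z}}$. Noting that each of the candidate sheaves $\underline{\mathbb{Z}}$, $(\pi_0\calO_X)^\times$, and $\pi_{k-1}\calO_X$ already satisfies fpqc-\'etale descent, the pointwise computations yield the stated values of $\pi_n\underline{\sBr}^\dagger_X$ for all $n\geq 1$.

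For the $n=0$ assertion, I would pass to an affine cover $\Spec A\to X$ guaranteed by the definition of a quasi-geometric stack, and argue that every invertible compactly generated stable quasi-coherent stack $\calC$ on $\Spec A$ becomes equivalent to $\Mod_A$ after a faithfully flat \'etale extension $A\to A'$. By the affine case of \cite[11.5.3.10]{lurie2018sag}, such a $\calC$ is represented by an Azumaya algebra $\calA$ over $A$; the standard fact that every Azumaya algebra is Morita-trivial \'etale-locally then produces the desired trivialization, giving $\pi_0\underline{\sBr}^\dagger_X\simeq 0$.

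The main obstacle is the careful bookkeeping around sheafification: one must verify that the pointwise stalks agree with the named candidate sheaves after passing to fpqc-\'etale sheafification, and handle the basepoint dependence in the Picard loop-space computation uniformly as $(A,\eta)$ varies. No substantive new ideas beyond those already present in \cite[\S11.5]{lurie2018sag} appear to be required.
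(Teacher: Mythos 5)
Your proposal is correct and takes the same approach as the paper, which simply invokes the argument of \cite[11.5.5.3]{lurie2018sag} verbatim. The reduction via $\Omega\underline{\sBr}^\dagger_X\simeq\underline{\sPic}^\dagger_X$, the $\mathrm{GL}_1$ and shift computation of the Picard homotopy sheaves, and the vanishing of $\pi_0$ by \'etale-local Morita triviality of Azumaya algebras are exactly the steps of Lurie's proof; since everything is computed pointwise on affine objects $(A,\eta)\in\CAlg^{\cn,\fpqc}_X$, nothing needs to change when $X$ is a quasi-geometric stack rather than a spectral algebraic space.
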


\begin{pg} 
	In the special case where $X$ is $0$-truncated (in the sense of \cite[9.1.6.1]{lurie2018sag}), the restriction of the underlying big \'etale sheaf of the structure sheaf $\calO_X$ to $\Shv^{\fpqc-\et}_X$ can be regarded as a commutative ring object of the topos of discrete objects of $\Shv^{\fpqc-\et}_X$; let us denote its group of units by $\calO_X^\times$. Arguing as in \cite[11.5.5.4, 11.5.5.5]{lurie2018sag} (using \ref{homotopy sheaves of Brauer sheaf} in place of \cite[11.5.5.3]{lurie2018sag}), \ref{homotopy sheaves of Brauer sheaf} supplies an equivalence $\underline{\sBr}^\dagger_X \simeq K(\calO_X^\times, 2) \times K(\underline{\mathbb{Z}},1)$ in the $\infty$-topos $\Shv^{\fpqc-\et}_X$. Since the space of global sections of $\underline{\sBr}^\dagger_X$ can be identified with $\sBr^\dagger(X)$, we have the following:
\end{pg}

\begin{lemma}
	Let $X$ be a $0$-truncated quasi-geometric stack. Then the homotopy groups of $\sBr^\dagger(X)$ are given by
$$
\pi_n\sBr^\dagger(X)\simeq
\begin{cases}
\HH^2_{\fpqc-\et}(X, \calO_X^\times)\times \HH^1_{\fpqc-\et}(X, \underline{\mathbb{Z}}) & \text{if $n=0$} \\
\HH^1_{\fpqc-\et}(X, \calO_X^\times)\times \HH^0_{\fpqc-\et}(X, \underline{\mathbb{Z}})  & \text{if $n=1$} \\
\HH^0_{\fpqc-\et}(X, \calO_X^\times) & \text{if $n=2$} \\
0 & \text{if $n\geq 3$,} 
\end{cases}
$$
where $\HH^\ast_{\fpqc-\et}(X,\bullet)$ denotes of the cohomology group of the fpqc-\'etale $\infty$-topos of $X$. 
\end{lemma}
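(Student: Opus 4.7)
The plan is to deduce the computation directly from the equivalence $\underline{\sBr}^\dagger_X \simeq K(\calO_X^\times, 2) \times K(\underline{\mathbb{Z}},1)$ in the fpqc-\'etale $\infty$-topos $\Shv^{\fpqc-\et}_X$ established in the preceding paragraph, by passing to global sections.

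First I would observe that the space of global sections of $\underline{\sBr}^\dagger_X$ in $\Shv^{\fpqc-\et}_X$ is canonically equivalent to $\sBr^\dagger(X)$, which follows from the fact that $\underline{\sBr}^\dagger_X$ is the functor represented by the Brauer space assignment and that $\Shv^{\fpqc-\et}_X$ is obtained by restricting along the inclusion $\CAlg^{\cn, \fpqc}_X \subseteq \CAlg^{\cn}_X$ (so global sections evaluate at the terminal object, which corresponds to $X$ itself). Since the global sections functor $\Gamma$ is a right adjoint, it preserves products, yielding a canonical equivalence
\[
\sBr^\dagger(X) \;\simeq\; \Gamma\bigl(K(\calO_X^\times, 2)\bigr) \;\times\; \Gamma\bigl(K(\underline{\mathbb{Z}},1)\bigr).
\]

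Next I would invoke the standard identification of homotopy groups of global sections of Eilenberg--MacLane objects with sheaf cohomology: for any abelian group object $A$ of the underlying ordinary topos of an $\infty$-topos $\calX$ and any integer $n \geq 0$, one has $\pi_i \Gamma(\calX, K(A,n)) \simeq \HH^{n-i}(\calX, A)$ for $0 \leq i \leq n$ and zero otherwise. Applied to $\calX = \Shv^{\fpqc-\et}_X$ with $A = \calO_X^\times$, $n = 2$ and $A = \underline{\mathbb{Z}}$, $n = 1$, this gives the homotopy groups of each factor; taking the product with respect to the K\"unneth-style identification $\pi_i(Y \times Y') \simeq \pi_i Y \times \pi_i Y'$ for pointed spaces yields exactly the formulas asserted in the statement.

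I do not expect any serious obstacle, since the entire argument is formal once one accepts the equivalence recalled in the preceding paragraph together with the sheaf-cohomological interpretation of Eilenberg--MacLane spaces in an $\infty$-topos; the latter is a standard feature of $\infty$-topos theory (e.g.\ along the lines of the discussion leading to \cite[11.5.5.4, 11.5.5.5]{lurie2018sag} used in the analogous computation for quasi-compact quasi-separated spectral algebraic spaces). The only mild point to check is that the product decomposition of $\underline{\sBr}^\dagger_X$ is respected by $\Gamma$ and by the passage to homotopy groups, both of which are automatic.
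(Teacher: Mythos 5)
Your argument is essentially the same as the paper's, which also reads off the homotopy groups from the equivalence $\underline{\sBr}^\dagger_X \simeq K(\calO_X^\times, 2) \times K(\underline{\mathbb{Z}},1)$ by passing to global sections over $\Shv^{\fpqc-\et}_X$. One small caution: the justification that $\Gamma(\underline{\sBr}^\dagger_X) \simeq \sBr^\dagger(X)$ via ``evaluation at the terminal object'' is really an appeal to fpqc descent of $\sBr^\dagger$ (the site $\CAlg^{\cn,\fpqc}_X$ has no object corresponding to $X$ itself unless $X$ is affine), but this is the same identification the paper asserts, so the proof stands.
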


\begin{remark}
	In the special case where $X$ is an ordinary quasi-compact quasi-separated scheme, we recover \cite[7.14]{MR3190610}. 
\end{remark}

\bibliography{chough_brauer}
\bibliographystyle{amsplain}

\end{document}